\newtheorem{theorem}{Theorem}[section]
\newtheorem{lemma}[theorem]{Lemma}
\theoremstyle{definition}
\newtheorem{definition}[theorem]{Definition}
\newtheorem{remark}[theorem]{Remark}
\numberwithin{equation}{section}
\title[]
{A median approach to differentiation bases}
\author{Toni Heikkinen and Juha Kinnunen}
\address{Department of Mathematics, Aalto University, P.O. Box 11100, FI-00076 Aalto University, Finland}
\email{toni.heikkinen@aalto.fi,juha.k.kinnunen@aalto.fi}
\newcommand\rn{\mathbb R^n}
\newcommand\re{\mathbb R}
\newcommand\rv{\overline{\mathbb R}}
\newcommand\n{\mathbb N}
\newcommand\z{\mathbb Z}
\newcommand\q{\mathbb Q}
\newcommand\D{\mathbb D}
\newcommand\ph{\varphi}
\newcommand\eps{\varepsilon}
\newcommand\M{\operatorname{\mathcal M}}
\newcommand\cA{\mathcal A}
\newcommand\cB{\mathcal B}
\newcommand\cC{\mathcal C}
\newcommand\cD{\mathcal D}
\newcommand\cF{\mathcal F}
\newcommand\diam{\operatorname{diam}}
\providecommand{\ch}[1]{\text{\raise 2pt \hbox{$\chi$}\kern-0.2pt}_{#1}}
\providecommand{\vint}[1]{\mathchoice
          {\mathop{\vrule width 5pt height 3 pt depth -2.5pt
                  \kern -9.5pt \kern 1pt\intop}\nolimits_{\kern -5pt{#1}}}%
          {\mathop{\vrule width 5pt height 3 pt depth -2.6pt
                  \kern -6pt \intop}\nolimits_{\kern -3pt{#1}}}%
          {\mathop{\vrule width 5pt height 3 pt depth -2.6pt
                  \kern -6pt \intop}\nolimits_{\kern -3pt{#1}}}%
          {\mathop{\vrule width 5pt height 3 pt depth -2.6pt
                  \kern -6pt \intop}\nolimits_{\kern -3pt{#1}}}}
\begin{document}

\begin{abstract} 
We study a version of the Lebesgue differentiation theorem in which the integral averages are replaced with medians over Busemann--Feller differentiation bases. Our main result gives several characterizations for the differentiation property in terms of the corresponding median maximal function. As an application, we study pointwise behaviour in Besov and Triebel--Lizorkin spaces, where functions are not necessarily locally integrable. Most of our results apply also for functions defined on metric measure spaces.
\end{abstract}

\keywords{}
\subjclass[2010]{46E35, 43A85} 

\date{\today}  

\maketitle

\section{Introduction}

If $f$ is a locally integrable function, then by the classical Lebesgue differentiation theorem, 
\begin{equation}\label{leb point}
\lim _{r\to0}\frac1{|B(x,r)|}\int_{B(x,r)}f(y)\,dy=f(x)
\end{equation}
for almost every $x\in\rn$. Here, $B(x,r)=\{y\in\rn: |x-y|<r\}$ is an open ball of radius $r>0$ centered at $x$ and $|A|$ denotes the Lebesgue measure of set $A\subset\rn$. If the integral averages in \eqref{leb point} are replaced by medians, or more generally, by $\gamma$-medians,
\[
m_f^\gamma(A)=\inf\big\{a\in\re: |\{x\in A: f(x)>a\}|< \gamma|A|\big\},
\]
where $A\subset\rn$ is a bounded and measurable set and $0<\gamma<1$, then for every measurable function $f:\rn\to[-\infty,\infty]$, with $|f(x)|<\infty$ for almost every $x\in\rn$, we have
\begin{equation}\label{gen leb point}
\lim _{r\to0}m^\gamma_f(B(x,r))=f(x)
\end{equation}
for almost every $x\in\rn$, see \cite{Fu}, \cite{PT}. 
Let $L^0(\rn)$ denote the set of all measurable functions $f\colon\rn\to[-\infty,\infty]$ such that $|f(x)|<\infty$ for almost every $x\in\rn$. Thus the Lebesgue differentiation theorem  with medians holds for functions in $L^0(\rn)$.

It is natural to ask whether \eqref{leb point} or \eqref{gen leb point} still hold true if the balls are replaced by
some other collections of sets converging to $x$.  In the case of integral averages, this problem has been studied by many authors, for example, see \cite{Guz1}, \cite{Guz2} and the references therein. It is known that if $\cB$ is a homothecy invariant Busemann--Feller basis (see Definition \ref{def:bases}), then
for every $f\in L^1(\rn)$, we have
\[
\lim_{\cB\,\ni B\to x}\frac{1}{|B|}\int_Bf(y)\,dy=f(x)
\]
for almost every $x\in\rn$ if and only if there exists a constant $C$ such that the corresponding maximal function 
\[
\M_\cB f(x)=\sup_{x\in B\in\cB}\frac{1}{|B|}\int_B|f(y)|\,dy
\] 
satisfies the weak type estimate
\[
|\{x\in\rn :\M_\cB f(x)>\lambda\}|\le C\lambda^{-1}\|f\|_{L^1(\rn)}
\]
for every $f\in L^1(\rn)$ and $\lambda>0$. 
A striking fact is that the qualitative Lebesgue differentiation theorem is characterized through a quantitative weak type estimate for the corresponding maximal function.

The purpose of this note is to obtain similar results for medians and the median maximal function
\[
\M^\gamma_\cB f(x)=\sup_{x\in B\in\cB}m^\gamma_{|f|}(B)
\] 
where $f:\rn\to[-\infty,\infty]$ is a measurable function with $|f(x)|<\infty$ for almost every $x\in\rn$.
Medians and related maximal functions have turned out to be useful in harmonic analysis and function spaces, see \cite{FZ}, \cite{Fu}, \cite{GKZ}, \cite{HIT}, \cite{HeKoTu}, \cite{HeTu2}, \cite{Hy}, \cite{Ha}, \cite{JPW}, \cite{JT}, \cite{J}, \cite{Kar}, \cite{L},  \cite{LP}, \cite{LP2}, \cite{PT}, \cite{St}, \cite{StTo}, \cite{Zh}. 
The main advantage of a median over an integral average is that it applies also when the function is not necessarily locally integrable.
This is relevant in certain function spaces, where functions are not necessarily locally integrable and thus integral averages are not defined. 
As we shall see, in many cases medians are more tractable than integral averages.

For homothecy invariant Busemann--Feller bases, we will prove the following theorem.

\begin{theorem}\label{thm1 in rn}
 Let $\cB$ be a homothecy invariant Busemann--Feller basis on $\rn$. Then the following claims are equivalent.
\begin{itemize}
\item[(1)] $\cB$ is a density basis, that is, for every measurable $A\subset \rn$,
\[
\lim_{\cB\,\ni B\to x}\frac{|A\cap B|}{|B|}=\chi_A(x)
\]
for almost every $x\in \rn$.

\item[(2)] For every  $0<\gamma<1$ and $f\in L^0(\rn)$,
\begin{equation}\label{gen leb point 2}
\lim_{\cB\,\ni B\to x}m^\gamma_f(B)=f(x)
\end{equation}
for almost every $x\in\rn$.

\item[(3)] For every $0<\gamma<1$, there exists a constant $C$ such that 
\[
|\{x\in\rn :\M^\gamma_\cB f(x)>\lambda\}|\le C|\{x\in\rn: |f(x)|>\lambda\}|
\]
for every $\lambda>0$ and $f\in L^0(\rn)$.

\item[(4)] For every $0<\gamma<1$ and $0<p<\infty$, there exists a constant $C$ such that 
\[
\|\M^\gamma_\cB f\|_{L^p(\rn)}\le C\|f\|_{L^p(\rn)}
\]
for every $f\in L^p(\rn)$.

\item[(5)] There exists $0<p<\infty$ such that, for every $0<\gamma<1$, $\lambda>0$ and for every sequence $(f_k)_{k\in\n}$ such that $\|f_k\|_{L^p(\rn)}\to 0$ as $k\to\infty$, we have 
\[
|\{x\in\rn:\M^\gamma_\cB f_k(B)>\lambda\}|\to 0
\]
as $k\to\infty$.

\item[(6)] There exists $0<p<\infty$ such that, for every $0<\gamma<1$ and $f\in L^p(\rn)$,
\[
\M^\gamma_\cB f(x)<\infty
\]
for almost every $x\in X$.
\end{itemize}
\end{theorem}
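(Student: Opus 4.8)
The plan is to prove the six statements equivalent by a cycle $(1)\Rightarrow(3)\Rightarrow(4)\Rightarrow(5)\Rightarrow(6)\Rightarrow(1)$, supplemented by the two (easy) directions of $(1)\Leftrightarrow(2)$. Everything rests on a few elementary properties of $\gamma$-medians that I would collect first: monotonicity ($f\le g\Rightarrow m^\gamma_f(B)\le m^\gamma_g(B)$), positive homogeneity ($m^\gamma_{|cf|}(B)=|c|\,m^\gamma_{|f|}(B)$), compatibility with truncation ($m^\gamma_{\min(f,N)}(B)=\min(m^\gamma_f(B),N)$, and symmetrically from below), the computation $m^\gamma_{\chi_E}(B)=1$ exactly when $|E\cap B|\ge\gamma|B|$ and $=0$ otherwise, and the implication ``$m^\gamma_{|f|}(B)>\lambda\Rightarrow|\{y\in B:|f(y)|>\lambda\}|\ge\gamma|B|$''. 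The last of these yields, for every $\lambda>0$, the pointwise inclusion
\[
\{\M^\gamma_\cB f>\lambda\}\ \subseteq\ \big\{x:\exists\,B\in\cB,\ x\in B,\ |\{|f|>\lambda\}\cap B|\ge\gamma|B|\big\}\ =\ \{\M^\gamma_\cB\chi_{\{|f|>\lambda\}}=1\},
\]
and, combined with the observation that $\M^\gamma_\cB\chi_E$ only takes the values $0$ and $1$, it shows that $(3)$ is \emph{equivalent} to the uniform halo bound $|\{\M^\gamma_\cB\chi_E=1\}|\le C_\gamma|E|$ for all measurable $E$. This reformulation is what tethers $(3)$ to the covering geometry of the basis.

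For $(1)\Rightarrow(2)$ I would fix $f\in L^0(\rn)$ and apply the density basis property simultaneously to all superlevel sets $E_t=\{f\ge t\}$, $t\in\q$, so that for a.e.\ $x$ one has $|E_t\cap B|/|B|\to\chi_{E_t}(x)$ as $\cB\ni B\to x$ for every rational $t$. At such an $x$: if $t\le f(x)$ then $x\in E_t$ and $|E_t\cap B|/|B|\to1$, forcing $m^\gamma_f(B)\ge t$ eventually; if $t>f(x)$ then $x\notin E_t$ and $|\{f>t\}\cap B|/|B|\le|E_t\cap B|/|B|\to0<\gamma$, forcing $m^\gamma_f(B)\le t$ eventually; letting $t\to f(x)$ through the rationals gives \eqref{gen leb point 2}. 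Conversely $(2)\Rightarrow(1)$ is immediate: take $f=\chi_A$ and use $m^\gamma_{\chi_A}(B)=\chi_{\{|A\cap B|\ge\gamma|B|\}}$, letting $\gamma\uparrow1$ at points of $A$ and $\gamma\downarrow0$ at points of $A^{c}$.

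The descending chain is routine. $(3)\Rightarrow(4)$ is Cavalieri's principle: $\|\M^\gamma_\cB f\|_{L^p}^p=p\int_0^\infty\lambda^{p-1}|\{\M^\gamma_\cB f>\lambda\}|\,d\lambda\le Cp\int_0^\infty\lambda^{p-1}|\{|f|>\lambda\}|\,d\lambda=C\|f\|_{L^p}^p$. $(4)\Rightarrow(5)$ is Chebyshev's inequality. $(5)\Rightarrow(6)$: if $\M^\gamma_\cB f=\infty$ on a set $E$ of positive measure for some $f\in L^p$, then by homogeneity $\M^\gamma_\cB(k^{-1}f)=k^{-1}\M^\gamma_\cB f=\infty$ on $E$ too while $\|k^{-1}f\|_{L^p}\to0$, so $|\{\M^\gamma_\cB(k^{-1}f)>\lambda\}|\ge|E|$ for all $k$ and $\lambda$, contradicting $(5)$. (The classical Lebesgue-point argument moreover gives $(3)\Rightarrow(2)$ outright — continuous functions satisfy \eqref{gen leb point 2}, a local continuous approximation together with the weak-type bound furnished by $(3)$ controls the oscillation, and truncation reduces a general $f\in L^0$ to the bounded case — but this is not needed to close the cycle.)

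The two substantive steps are $(1)\Rightarrow(3)$ and $(6)\Rightarrow(1)$, and both rest on one construction. For the $(6)\Rightarrow(1)$ direction I would argue by contraposition: a standard weak-type-plus-continuous-approximation argument shows that a \emph{finite} halo already forces the density property, so if $\cB$ is not a density basis then its halo is infinite for some $\gamma_0$; then, using homothecy invariance to rescale the offending sets and passing to members of $\cB$ of diameter $<\delta$ (so that relevant spreads localize), one produces sets $A_j$ with $|A_j|\le2^{-j}$ but $|\{\M^{\gamma_0}_\cB\chi_{A_j}=1\}|\ge\eps_0>0$, all of them and their spreads sitting inside a fixed bounded set $Q$; taking $f=\sup_j c_j\chi_{A_j}$ with $c_j\uparrow\infty$ and $\sum_j c_j^p|A_j|<\infty$ gives $f\in L^p$ with $\M^{\gamma_0}_\cB f\ge c_j$ on $\{\M^{\gamma_0}_\cB\chi_{A_j}=1\}$, hence $\M^{\gamma_0}_\cB f=\infty$ on $\limsup_j\{\M^{\gamma_0}_\cB\chi_{A_j}=1\}$, a set of measure $\ge\eps_0$ by Fatou's lemma on $Q$ — contradicting $(6)$. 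Running the \emph{same} construction with the diameter cutoffs $\delta=\delta_j\downarrow0$ makes the bad function additionally exhibit $\limsup_{\cB\ni B\to x}|A\cap B|/|B|\ge\gamma_0>0$ on a positive-measure subset of $A^{c}$, where $A=\bigcup_jA_j$; by contraposition and the identification $(3)\Leftrightarrow$\{finite halo\}, this gives $(1)\Rightarrow(3)$. The main obstacle is precisely making this work: squeezing the rescaled offending sets — and their median-maximal spreads — into one fixed bounded region while their measures shrink to zero, which is where homothecy invariance, the Busemann--Feller structure, and the localization of $\cB$ by diameter have to be used together carefully. This is also where the advantage of medians over integral averages enters: the construction, and the whole cycle, goes through for every $0<p<\infty$ and for all of $L^0(\rn)$, with no recourse to local integrability.
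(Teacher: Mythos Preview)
Your cycle and the easy implications are set up exactly as in the paper, and your reduction of $(3)$ to the uniform halo bound via the identity $\{\M^\gamma_\cB f>\lambda\}=\{\M_\cB\chi_{\{|f|>\lambda\}}>\gamma\}$ is precisely the paper's Lemma~\ref{Mgamma M lemma}. For $(1)\Rightarrow(3)$ the paper simply quotes de~Guzm\'an's halo characterization (Theorem~\ref{Guz1 thm 1.2}); your sketch of the infinite-halo construction amounts to re-deriving the hard direction of that theorem, which is fine.

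The genuine divergence is in how you close the loop from $(6)$. The paper does \emph{not} go $(6)\Rightarrow(1)$ directly; it proves $(6)\Rightarrow(5)$ by a Baire category argument: one replaces $\cB$ by a countable sub-basis (Lemma~\ref{countable}), views the truncated operators $\M^{\gamma/2}_k$ as continuous maps $L^p\to L^0$ (the latter with the quasi-norm $\|f\|_{L^0}=\inf_\lambda\{\lambda+\mu(\{|f|>\lambda\})\}$), and applies the Baire--Hausdorff theorem to the exhaustion $L^p=\bigcup_m\{f:\sup_k\|m^{-1}\M^{\gamma/2}_k f\|_{L^0}\le\eps\}$. Then $(5)\Rightarrow(1)$ is read off from de~Guzm\'an's other characterization (Theorem~\ref{Guz1 thm 1.1}). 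Your route instead builds, from an infinite halo, an explicit $f\in L^p$ with $\M^{\gamma_0}_\cB f=\infty$ on a set of positive measure. This is correct and more constructive, but it leans on homothecy invariance at the crucial localization step (rescaling the offending sets and their spreads into a fixed cube); the paper's Baire argument, by contrast, uses no homothecies at all, which is why the authors can remark immediately after the theorem that the equivalence of $(1),(2),(5),(6)$ persists on general metric measure spaces. So what you gain in concreteness you lose in generality, and the ``main obstacle'' you flag---confining the spreads---is exactly the step the paper's soft argument sidesteps.
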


Homothecy invariance and special covering properties of the Euclidean spaces are needed only in showing that (1) implies (3). Assertions (1), (2), (5) and (6) are equivalent in more general metric measure spaces.

As an application, we study pointwise behaviour of functions in Besov and Triebel--Lizorkin spaces. We employ the definitions of Besov and Triebel--Lizorkin spaces in metric measure spaces introduced in \cite{KYZ}. This definition is motivated by the Haj\l asz--Sobolev spaces, see \cite{H} and for fractional scales \cite{Y}. The functions in these spaces are more regular than arbitrary measurable functions, but they are not necessarily locally integrable. Exceptional sets are measured with the corresponding capacity instead of the underlying measure. We give several characterizations of \eqref{gen leb point 2} in the context of metric measure spaces.
The results are new already in the Euclidean case with Lebesgue measure, but definitions of the function spaces in more general metric measure spaces give a transparent and flexible approach to pointwise behaviour.






\section{Preliminaries}\label{sec: preliminaries}

\subsection{Basic assumptions}
In this paper,  $X=(X, d,\mu)$ denotes a metric measure space equipped with a metric $d$ and a Borel regular
outer measure $\mu$, for which the measure of every ball is positive and finite.A measure $\mu$ is \emph{doubling} if there exists a constant $C_d$, such that
\[
\mu(B(x,2r))\le C_d\mu(B(x,r))
\]
for every ball $B(x,r)=\{y\in X:d(y,x)<r\}$, where $x\in X$ and $r>0$.
The doubling condition is equivalent to existence of constants $C$ and $Q$ such that
\begin{equation}\label{doubling dim}
\frac{\mu(B(y,r))}{\mu(B(x,R))}\ge C\Big(\frac rR\Big)^Q 
\end{equation}
for every $0<r\le R$ and $y\in B(x,R)$. 

The integral average of a locally integrable function $f$ over a measurable set $A$ of positive and finite measure is denoted by
\[
f_A=\vint{A}f\,d\mu=\frac{1}{\mu(A)}\int_A f\,d\mu.
\]

The characteristic function of a set $E\subset X$ is denoted by $\ch{E}$. 
$L^0(X)$ is the set of all measurable functions $f\colon X\to[-\infty,\infty]$ such that $|f(x)|<\infty$ for almost every $x\in X$.
In general, $C$ denotes a positive and finite constant whose value are not necessarily same at each occurrence.
When we want to emphasize that the constant depends on parameters $a,b,\dots$, we write $C=C(a,b,\dots)$.

\subsection{$\gamma$-median} 

Let $A\subset X$ be a measurable set with $\mu(A)<\infty$. 
For $0<\gamma< 1$, the $\gamma$-median of a measurable function
$f\colon X\to[-\infty,\infty]$ over $A$ is
\[
m_f^\gamma(A)=\inf\big\{a\in\re:\mu(\{x\in A: f(x)>a\})< \gamma\mu(A)\big\}.
\]
Note that  $m_f^\gamma(A)$ is finite, if $f\in L^0(A)$ and $0<\mu(A)<\infty$. 
We list some basic properties of the $\gamma$-median below. Properties (1), (2), (4), (6), (7) and (8) are proved in the Euclidean setting in \cite[Propositions 1.1 and 1.2]{PT}.
The remaining properties follow immediately from the definition.

\begin{lemma}\label{median lemma} 
Let $A,B\subset X$ be measurable sets with $\mu(A)<\infty$ and $\mu(B)<\infty$ and assume that $f,g:X\to[-\infty,\infty]$ are measurable functions. 
\begin{itemize}
\item[(1)] If $0<\gamma\le\gamma'<1$, then $m_{f}^{\gamma}(A)\ge m_{f}^{\gamma'}(A)$.
\item[(2)] If $f\le g$ almost everywhere in $A$, then  $m_{f}^{\gamma}(A)\le m_{g}^{\gamma}(A)$.
\item[(3)] If $A\subset B$ and $\mu(B)\le C\mu(A)$, then $m_{f}^{\gamma}(A)\le m_{f}^{\gamma/C}(B)$.
\item[(4)]  $m_f^\gamma(A)+c=m_{f+c}^\gamma(A)$  for every $c\in\re$.
\item[(5)] $m_{c\,f}^\gamma(A)=c\,m_{f}^\gamma(A)$ for every $c>0$.
\item[(6)] $|m_{f}^\gamma(A)|\le m_{|f|}^{\min\{\gamma,1-\gamma\}}(A)$. 
\item[(7)] $m_{f+g}^\gamma(A)\le m_{f}^{\gamma_1}(A)+m_{g}^{\gamma_2}(A)$ whenever $\gamma_1+\gamma_2=\gamma$.
\item[(8)] For every $p>0$, 
\[
m_{|f|}^\gamma(A)\le \Big(\gamma^{-1}\vint{A}|f|^p\,d\mu\Big)^{1/p}.
\]
\item[(9)] If $\cB$ is a differentiation basis and $f$ is continuous, then for every $x\in X$,
\[
\lim_{\cB\,\ni B\to x} m_{f}^\gamma(B)=u(x).
\]
\end{itemize}
\end{lemma}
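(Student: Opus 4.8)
The plan is to reduce every item to an elementary computation with the distribution function $D(a)=\mu(\{x\in A: f(x)>a\})$ (and its analogues for $g$, $|f|$, $f+g$), which is nonincreasing and right-continuous in $a$, so that $S_\gamma=\{a\in\re: D(a)<\gamma\mu(A)\}$ is upward closed and $m_f^\gamma(A)=\inf S_\gamma$. The single fact I would isolate first and use everywhere is: if $a>m_f^\gamma(A)$ then $D(a)<\gamma\mu(A)$ (because $a>\inf S_\gamma$ and $S_\gamma$ is upward closed, so $a\in S_\gamma$), while if $a<m_f^\gamma(A)$ then $D(a)\ge\gamma\mu(A)$. Throughout I assume $0<\mu(A)<\infty$.

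Items (1) and (2) are then immediate from monotonicity: increasing $\gamma$ enlarges $S_\gamma$, and $f\le g$ almost everywhere in $A$ gives $D_f\le D_g$, hence $S_\gamma^g\subseteq S_\gamma^f$. For (3), set $a=m_f^{\gamma/C}(B)$; for each $\eps>0$ the one-sided bound gives $\mu(\{x\in A: f(x)>a+\eps\})\le\mu(\{x\in B: f(x)>a+\eps\})<(\gamma/C)\mu(B)\le\gamma\mu(A)$, so $a+\eps\in S_\gamma$ for $A$, and letting $\eps\to0$ yields $m_f^\gamma(A)\le a$. Items (4) and (5) follow from the identities $D_{f+c}(a)=D_f(a-c)$ and $D_{cf}(a)=D_f(a/c)$ for $c>0$. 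For (9), continuity of $f$ squeezes $f$ between the constants $f(x)\pm\eps$ on sets $B\in\cB$ close to $x$; since the $\gamma$-median of a constant equals that constant, (2) and (4) give $|m_f^\gamma(B)-f(x)|\le\eps$ for such $B$, which is the claim (the conclusion should read $f(x)$, not $u(x)$).

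Items (7) and (8) carry the only real content. For (7), with $a=m_f^{\gamma_1}(A)$, $b=m_g^{\gamma_2}(A)$ and $\eps>0$, use the inclusion $\{f+g>a+b+2\eps\}\subseteq\{f>a+\eps\}\cup\{g>b+\eps\}$; taking measures and applying the one-sided bound twice gives measure $<(\gamma_1+\gamma_2)\mu(A)=\gamma\mu(A)$, so $a+b+2\eps\in S_\gamma$, and $\eps\to0$ finishes the proof. For (8), Chebyshev's inequality gives $\mu(\{|f|>s\})=\mu(\{|f|^p>s^p\})\le s^{-p}\int_A|f|^p\,d\mu$ for every $s>0$; if $s>(\gamma^{-1}\vint{A}|f|^p\,d\mu)^{1/p}$ the right-hand side is $<\gamma\mu(A)$, so every such $s$ lies in $S_\gamma$ and the bound follows, the degenerate cases $\int_A|f|^p\,d\mu\in\{0,\infty\}$ being checked directly.

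Item (6) is where I expect the only friction, since it mixes $f$ with $|f|$ and the infimum definition is sensitive to the strict inequality in $S_\gamma$, so one must pass carefully between $>$ and $\ge$. Write $\beta=\min\{\gamma,1-\gamma\}$ and $b=m_{|f|}^\beta(A)\ge0$. The upper bound is soft: $f\le|f|$ together with (2) and then (1) (using $\gamma\ge\beta$) gives $m_f^\gamma(A)\le m_{|f|}^\gamma(A)\le m_{|f|}^\beta(A)=b$. For the lower bound $m_f^\gamma(A)\ge-b$, fix any $a<-b$ (so $a<0$); then $\{f\le a\}\subseteq\{|f|\ge-a\}$, and choosing $c$ with $b<c<-a$ the one-sided bound gives $\mu(\{|f|\ge-a\})\le\mu(\{|f|>c\})<\beta\mu(A)\le(1-\gamma)\mu(A)$, so $\mu(\{f>a\})=\mu(A)-\mu(\{f\le a\})>\gamma\mu(A)$; hence no $a<-b$ lies in $S_\gamma$, and therefore $m_f^\gamma(A)=\inf S_\gamma\ge-b$. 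Apart from this, the lemma is a sequence of short distribution-function estimates, matching the Euclidean arguments of \cite{PT}.
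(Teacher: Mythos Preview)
Your proofs are correct and follow the same elementary distribution-function approach that the paper invokes; the paper itself does not spell out the arguments but simply cites \cite{PT} for items (1), (2), (4), (6), (7), (8) and says the rest follow from the definition, so your write-up is in fact more complete than the paper's while matching its intended route. Your observation that the conclusion in (9) should read $f(x)$ rather than $u(x)$ is also right.
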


Property (9) above asserts that the pointwise value of a continous function can be obtained as a limit of medians over small balls. In this sense medians behave like integral averages of continuous functions. 


\subsection{Differentiation bases}
A collection $\cB=\cup_{x\in X}\cB(x)$, where $\cB(x)$ consist of  bounded measurable sets containing $x$, is called a \emph{differentiation basis} if, for every $x\in\rn$ and $\eps>0$, there exists $B\in\cB(x)$ such that $\diam(B)<\eps$. 
Let $f\in L^0(X)$. For a diffentiation basis $\cB$, we denote
\[
\limsup_{\cB\,\ni B\to x} m^\gamma_f(B)=\lim_{\eps\to 0}\sup\{m^\gamma_u(B): B\in\cB(x), \, \diam(B)<\eps\},
\]
\[
\liminf_{\cB\,\ni B\to x} m^\gamma_f(B)=\lim_{\eps\to 0}\inf\{m^\gamma_u(B): B\in\cB(x), \, \diam(B)<\eps\}
\]
and 
\[
\lim_{\cB\,\ni B\to x} m^\gamma_f(B)=\limsup_{\cB\,\ni B\to x} m^\gamma_f(B),
\]
if the limes superior and the limes inferior coincide.

\begin{definition}\label{def:bases}
\begin{itemize}
\item[]
\item[(1)] A differentiation basis $\cB$ is a \emph{Busemann--Feller basis} if each $B\in \cB$ is open and $x\in B\in\cB$ implies that $B\in\cB(x)$.
\item[(2)] A differentiation basis $\cB$ is a \emph{density basis} if, for every measurable $A\subset X$,
\[
\lim_{\cB\,\ni B\to x}\frac{\mu(A\cap B)}{\mu(B)}=\chi_A(x)
\]
for almost every $x\in X$.
\item[(3)] A differentiation basis $\cB$ is \emph{homothecy invariant} if $B\in\cB$ implies that $B'\in \cB$ for every $B'$ homothetic to $B$.
\end{itemize}
\end{definition}
We begin with a version of the Lebesgue density theorem for $\gamma$-medians.

\begin{theorem}
 The following claims are equivalent.
\begin{itemize}
\item[(1)] $\cB$ is a density basis.

\item[(2)] For every $f\in L^0(X)$, there exists a set $E$ with measure zero such that
\[
\lim_{\cB\,\ni B\to x}m^\gamma_{|f-f(x)|}(B)=0
\]
for every $0<\gamma<1$ and $x\in X\setminus E$.

\item[(3)] For every $f\in L^0(X)$, there exists a set $E$ with measure zero such that
\[
\lim_{\cB\,\ni B\to x}m^\gamma_f(B)=f(x)
\]
for every $0<\gamma<1$ and $x\in X\setminus E$.
\end{itemize}
\end{theorem}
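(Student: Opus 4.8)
The plan is to establish the cyclic chain of implications $(1)\Rightarrow(2)\Rightarrow(3)\Rightarrow(1)$, so that all the substantial work is isolated in $(1)\Rightarrow(2)$. The implication $(2)\Rightarrow(3)$ is immediate from Lemma~\ref{median lemma}: given $f\in L^0(X)$, take the null set $E$ furnished by (2), which we may assume to contain $\{x:|f(x)|=\infty\}$, so that $f(x)\in\re$ whenever $x\notin E$. For such $x$ and any $B\in\cB(x)$, property (4) gives $m^\gamma_f(B)-f(x)=m^\gamma_{f-f(x)}(B)$ and property (6) gives $|m^\gamma_{f-f(x)}(B)|\le m^{\min\{\gamma,1-\gamma\}}_{|f-f(x)|}(B)$. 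Since $\min\{\gamma,1-\gamma\}\in(0,1)$, assertion (2) forces the last quantity to tend to $0$ as $\cB\ni B\to x$, hence $\lim_{\cB\ni B\to x}m^\gamma_f(B)=f(x)$; note the same $E$ serves every $\gamma\in(0,1)$.

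For $(3)\Rightarrow(1)$, fix a measurable set $A\subset X$ and apply (3) to $f=\chi_A$. A direct check from the definition of the $\gamma$-median shows that $m^\gamma_{\chi_A}(B)=1$ if $\mu(A\cap B)\ge\gamma\mu(B)$ and $m^\gamma_{\chi_A}(B)=0$ otherwise, recalling that $0<\mu(B)<\infty$ for $B\in\cB$. Let $E$ be the null set provided by (3) for this $f$. If $x\in A\setminus E$, then $\lim_{\cB\ni B\to x}m^\gamma_{\chi_A}(B)=1$ for every $\gamma$; as $m^\gamma_{\chi_A}$ takes values in $\{0,1\}$, this means $\liminf_{\cB\ni B\to x}\mu(A\cap B)/\mu(B)\ge\gamma$ for every $\gamma<1$, so the density equals $1$. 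If $x\notin A\cup E$, then $\lim_{\cB\ni B\to x}m^\gamma_{\chi_A}(B)=0$ for every $\gamma$ gives $\limsup_{\cB\ni B\to x}\mu(A\cap B)/\mu(B)\le\gamma$ for every $\gamma>0$, so the density equals $0$. Hence $\cB$ is a density basis.

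The heart of the proof is $(1)\Rightarrow(2)$. Assume $\cB$ is a density basis and fix $f\in L^0(X)$. For rationals $p\le q$ set $A_{p,q}=\{x\in X: p\le f(x)\le q\}$, and let $N_{p,q}$ be the null set outside of which $\mu(A_{p,q}\cap B)/\mu(B)\to\chi_{A_{p,q}}(x)$ as $\cB\ni B\to x$; put $E=\{x:|f(x)|=\infty\}\cup\bigcup_{p,q\in\q,\,p\le q}N_{p,q}$, a set of measure zero that does not depend on $\gamma$. Let $x\notin E$, so $f(x)\in\re$, and let $\eps>0$. Choose rationals $p,q$ with $f(x)-\eps<p\le f(x)\le q<f(x)+\eps$; then $x\in A_{p,q}$ and $A_{p,q}\subset\{y:|f(y)-f(x)|<\eps\}$, so $\mu(A_{p,q}\cap B)/\mu(B)\to1$. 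Consequently $\mu(\{y:|f(y)-f(x)|>\eps\}\cap B)\le\mu(B)-\mu(A_{p,q}\cap B)<\gamma\mu(B)$ once $\diam(B)$ is small enough, which says precisely that $m^\gamma_{|f-f(x)|}(B)\le\eps$ for all such $B$. Since the $\gamma$-median of a nonnegative function is nonnegative, we get $0\le\liminf_{\cB\ni B\to x}m^\gamma_{|f-f(x)|}(B)\le\limsup_{\cB\ni B\to x}m^\gamma_{|f-f(x)|}(B)\le\eps$, and letting $\eps\to0$ yields (2).

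The main obstacle is not a single estimate but the bookkeeping in $(1)\Rightarrow(2)$: assertion (2) demands one null set $E$ valid for every $\gamma$ simultaneously, so the reduction to Lebesgue density points must be arranged so that $\gamma$ enters only through the threshold on $\diam(B)$. This is exactly why the argument is built around the level sets $A_{p,q}$ with rational endpoints bracketing $f(x)$, whose exceptional density sets do not depend on $\gamma$; this step uses separability of $\re$ rather than of $X$, and the standing assumption $0<\mu(B)<\infty$ for $B\in\cB$ is what legitimizes replacing $\mu(B\setminus A_{p,q})$ by $\mu(B)-\mu(A_{p,q}\cap B)$.
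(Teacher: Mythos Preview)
Your proof is correct, and the implications $(2)\Rightarrow(3)$ and $(3)\Rightarrow(1)$ coincide with the paper's. The step $(1)\Rightarrow(2)$, however, is organized differently. The paper proceeds in two stages: first, for a \emph{fixed} $\gamma$, it builds dyadic step approximations $S_k=\sum_j (j-1)2^{-k}\chi_{E_{k,j}}$ of $f$ and uses the density property on the level sets $E_{k,j}$ to show $m^\gamma_f(B)\to f(x)$ almost everywhere (adapting \cite{PT}); second, it applies this to the countable family $|f-q|$, $q\in\q$, with rational $\eta$ in place of $\gamma$, and then uses monotonicity in $\gamma$ and the quasi-subadditivity of medians to obtain a single null set that works for all $\gamma$. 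Your argument short-circuits this detour: by applying the density hypothesis directly to the countable family $A_{p,q}=\{p\le f\le q\}$, $p,q\in\q$, you manufacture the uniform null set from the outset and reach the conclusion $m^\gamma_{|f-f(x)|}(B)\le\eps$ without ever passing through step-function approximations. The paper's route has the pedagogical merit of isolating the fixed-$\gamma$ statement (which has independent interest and history); yours is shorter and makes more transparent that $\gamma$ enters only through the smallness threshold on $\diam(B)$, exactly as you emphasize.
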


\begin{proof} We begin with showing that (1) implies (2).
First we prove that for fixed $f\in L^0(X)$ and $0<\gamma<1$,
\[
\lim_{\cB\,\ni B\to x}m^\gamma_f(B)=f(x)
\]
for almost every $x\in X$. The proof is a slight modification of the proof of \cite[Theorem 3.1]{PT}. 
For $k=1,2,\dots$ and $j\in\z$, denote 
\[
E_{k,j}=\{x: (j-1)2^{-k}\le f(x)<j2^{-k}\}
\]
and 
\[
S_k=\sum_{j\in\z}(j-1)2^{-k}\chi_{E_{k,j}}.
\]
Then $\mu(X\setminus\cup_{k,j}E_{k,j})=0$. Since $S_k\le u\le S_k+2^{-k}$ almost everywhere, we have 
\begin{equation}\label{eq 1 for Sk}
m^\gamma_{S_k}(B)\le m^\gamma_f(B)\le m^\gamma_{S_k}(B)+2^{-k}
\end{equation}
for every $B\in\cB$. Denote 
\[
A_{k,j}=\left\{x\in E_{k,j}: \lim_{\cB\,\ni B\to x}\frac{\mu(E_{k,j}\cap B)}{\mu(B)}=1\right\}
\]
and $A=\cup_{k=1}^\infty\cup_{j\in\z}A_{k,j}$. Then, by assumption, $\mu(X\setminus A)=0$. We show that
\[
\lim_{\cB\,\ni B\to x}m^\gamma_f(B)=f(x)
\]
for every $x\in A$. Let $x\in A$ and $\eps>0$. Choose $k$ such that
$2^{-k+1}<\eps$. Then $x\in A_{k,j}$ for some $j$. Thus, for all $B\in\cB$ with $\diam(B)$ small enough,
\[
\frac{\mu(E_{k,j}\cap B)}{\mu(B)}>\max\{\gamma,1-\gamma\}.
\]
For such $B$, we have 
\begin{equation}\label{eq 2 for Sk}
m^\gamma_{S_k}(B)=(j-1)2^{-k}.
\end{equation}
Indeed, 
\[
\begin{split}
\mu(\{y\in B: S_k(y)>(j-1)2^{-k}\})
&\le \mu(B\setminus E_{k,j})\\
&=\mu(B)-\mu(B\cap E_{k,j})<\gamma\mu(B)
\end{split}
\]
and thus $m^\gamma_{S_k}(B)\le (j-1)2^{-k}$. On the other hand, for every $\delta>0$,
\[
\mu(\{y\in B: S_k(y)>(j-1)2^{-k}-\delta\})\ge \mu(B\cap E_{k,j})>\gamma\mu(B)
\]
which implies that $m^\gamma_{S_k}(B)\ge (j-1)2^{-k}$.
By combining \eqref{eq 1 for Sk} and \eqref{eq 2 for Sk}, we obtain
\[
|m^\gamma_f(B)-u(x)|\le |m^\gamma_f(B)-m^\gamma_{S_k}(B)|+|m^\gamma_{S_k}(B)-f(x)|\le 2^{-k+1}<\eps
\]
for every $B\in\cB$ with $\diam(B)$ small enough. 

Then assume that $f\in L^0(X)$. By what we have shown above, there exists $E\subset X$ with $\mu(E)=0$ such that
\[
\lim_{\cB\,\ni B\to x}m^\eta_{|f-q|}(B)=|f(x)-q|
\]
whenever $\eta\in \q\cap [0,1]$, $q\in\q$ and $x\in X\setminus E$. 
If $0<\gamma<1$, $x\in X\setminus E$ and $\eps>0$,
we choose $q\in\q$ such that $|f(x)-q|<\eps$ and $\eta\in\q$ such that $0<\eta\le \gamma$. This implies
\[
\lim_{\cB\,\ni B\to x}m^\gamma_{|f-f(x)|}(B)
\le\lim_{\cB\,\ni B\to x}m^\eta_{|f-f(x)|}(B)
<\lim_{\cB\,\ni B\to x}m^\eta_{|f-q|}(B)+\eps<2\eps,
\]
which concludes the proof that (1) implies (2). 

Then we show that (2) implies (3).
Since
\[
|f(x)-m^\gamma_f(B)|=|m^\gamma_{(f-f(x))}(B)|
\le m^{\min\{\gamma,1-\gamma\}}_{|f-f(x)|}(B),
\]
the claim follows immediately.

Finally, we conclude that (3) implies (1). 
It is easy to see that  \[m^\gamma_{\chi_A}(B)=\chi_{(0,\mu(A\cap B)/\mu(B)]}(\gamma)\]
for every $0<\gamma <1$ and $A,B\subset X$. 
This completes the proof.
\end{proof}

\section{Proof of Theorem \ref{thm1 in rn}}


Next, we prove that if $\cB$ is a homothecy invariant Busemann--Feller density basis on $\rn$, then $\M^\gamma_\cB$ is bounded on $L^p(\rn)$  for $p\ge 0$. The main ingredient of the proof is the following characterization of a density basis (\cite[Theorem 1.2]{Guz1}).

\begin{theorem}\label{Guz1 thm 1.2}
Let $\cB$ be a homothecy invariant Busemann--Feller basis on $\rn$. Then the following
are equivalent.
\begin{itemize}
\item[(1)] $\cB$ is a density basis. 
\item[(2)] For every $0<\gamma<1$, there exists a constant $C=C(\gamma)$ such that 
\begin{equation}\label{Guz thm 1.2 b}
|\{x\in\rn:\M_\cB\chi_A(x)>\gamma\}|\le C|A|
\end{equation}
for every bounded measurable set $A\subset\rn$.
\end{itemize}

\begin{remark}
If \eqref{Guz thm 1.2 b} holds for every bounded measurable set $A\subset\rn$, then
it holds for every measurable $A\subset\rn$. Indeed, if $A$ is measurable, then $A_k=A\cap B(0,k)$ is bounded and measurable for every $k$ and so
\[
\begin{split}
|\{x\in\rn:\M_\cB\chi_A(x)>\gamma\}|&=\lim_{k\to\infty}|\{x\in\rn:\M_\cB\chi_{A_k}(x)>\gamma\}|\\
 &\le C\lim_{k\to\infty}|A_k|
 =C|A|.
\end{split}
\]
\end{remark}

\end{theorem}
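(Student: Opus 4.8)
The plan is to establish the two implications separately; $(2)\Rightarrow(1)$ is the routine direction, while $(1)\Rightarrow(2)$ is where homothecy invariance is used in an essential way.

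For $(2)\Rightarrow(1)$ I would run the classical argument by which a weak‑type bound for the maximal operator propagates a.e.\ differentiation from a dense subclass to all sets. Associate with $f\in\lloc\rn$ the oscillation functional $Sf(x)=\limsup_{\cB\ni B\to x}|B|^{-1}\int_B|f-f(x)|\,dy$; it is subadditive, and, because $\cB$ is a differentiation basis, it vanishes a.e.\ when $f=\chi_C$ with $C$ a finite union of open cubes, since for $x\notin\partial C$ every $B\in\cB(x)$ of small enough diameter is contained in $C$ or in its complement. Given a bounded measurable $A$ and $\delta>0$, pick such a $C$ with $|A\triangle C|<\delta$; then a.e.
\[
S\chi_A\le S\chi_C+S(\chi_A-\chi_C)\le\M_\cB\chi_{A\triangle C}+\chi_{A\triangle C},
\]
so by (2), $|\{S\chi_A>\lambda\}|\le(C(\lambda/2)+1)\delta$ for every $\lambda>0$. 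Letting $\delta\to0$ and then $\lambda\to0$ forces $S\chi_A=0$ a.e., which unwinds to $\lim_{\cB\ni B\to x}|A\cap B|/|B|=\chi_A(x)$ a.e.; the restriction to bounded $A$ is removed by noting that $A$ agrees with $A\cap B(0,k)$ on all small sets about points of $B(0,k-1)$ — the same localization that gives the Remark, where moreover $\{\M_\cB\chi_A>\gamma\}=\bigcup_k\{\M_\cB\chi_{A\cap B(0,k)}>\gamma\}$ is an increasing union. This implication uses neither homothecy invariance nor the Busemann--Feller property.

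For $(1)\Rightarrow(2)$ I would argue by contradiction, condensing a hypothetical failure of the weak‑type bound into a single set that is not differentiated. Suppose (2) fails at $\gamma_0$, so $\sup_A|\{\M_\cB\chi_A>\gamma_0\}|/|A|=\infty$. For a sequence $a_k\to\infty$ to be chosen, pick bounded $A_k$ with $|\{\M_\cB\chi_{A_k}>\gamma_0\}|>a_k|A_k|$; using (1) one sees $A_k$ lies a.e.\ in this level set, and after discarding the contributing sets $B\in\cB$ of diameter larger than some $D_k$ and a translation, one is left with a \emph{bounded} set $G_k=\bigcup\{B\in\cB:\diam B\le D_k,\ |A_k\cap B|>\gamma_0|B|\}$, which we put in some ball $B(0,R_k)$, with $A_k\subset G_k$ and $|G_k|\ge\tfrac12 a_k|A_k|$. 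Choose a scale $t_k>0$ so small that $t_kD_k<1/k$ and $t_kR_k<1/k$, and fix a cube $Q^*\supset[0,1)^n$ with a unit margin. Iterating the elementary identity $|Q^*|^{-1}\int_{Q^*}|V\setminus(t_kG_k+v)|\,dv=|V|\bigl(1-t_k^n|G_k|/|Q^*|\bigr)$ (valid for $V\subset[0,1)^n$ because $t_kG_k$ is tiny) produces $N_k$ translates $v_{k,i}\in Q^*$, with $N_kt_k^n|G_k|$ of order $k|Q^*|$, so that $\bigcup_i(t_kG_k+v_{k,i})$ misses at most $2^{-k-2}$ of $[0,1)^n$. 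Put $E_k=\bigcup_i(t_kA_k+v_{k,i})$ and $E=\bigcup_kE_k$, a bounded set. Since $|E_k|\le N_kt_k^n|A_k|=\bigl(N_kt_k^n|G_k|\bigr)|A_k|/|G_k|\lesssim k|Q^*|/a_k$, a sufficiently fast growth of $a_k$ gives $\sum_k|E_k|<\tfrac14$, while $W:=\bigcap_k\bigcup_i\bigl((t_kG_k+v_{k,i})\cap[0,1)^n\bigr)$ satisfies $|W|\ge\tfrac34$, so $|W\setminus E|>0$. Finally, for $x\in W$ and each $k$ there is $i$ with $x\in t_kG_k+v_{k,i}$; pulling this membership back along the homothety $y\mapsto t_ky+v_{k,i}$ and invoking homothecy invariance together with the Busemann--Feller property, we obtain $\widetilde B_k\in\cB(x)$ with $\diam\widetilde B_k<1/k$ and $|E\cap\widetilde B_k|\ge|(t_kA_k+v_{k,i})\cap\widetilde B_k|>\gamma_0|\widetilde B_k|$. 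Hence $\limsup_{\cB\ni B\to x}|E\cap B|/|B|\ge\gamma_0$ for all $x\in W$; on the positive‑measure set $W\setminus E$ this contradicts (1), which requires this limit to equal $\chi_E=0$ a.e.

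The packing step is where I expect the real difficulty. Covering a fixed cube by the shrunken translates of $G_k$ unavoidably needs a number of copies of order $|Q^*|/(t_k^n|G_k|)$ times a factor growing with $k$, which by itself would make $\sum_k|E_k|$ diverge; the construction closes only because a failure of (2) furnishes sets with ratio $|G_k|/|A_k|$ that can be forced to grow as fast as one wishes, and the heart of the matter is tuning $a_k$ so the two rates cooperate. Homothecy invariance is precisely what permits shrinking the bad configurations so that the offending sets have diameter tending to zero and $E$ stays bounded, and the Busemann--Feller property is what returns those shrunken sets to $\cB(x)$. A harmless technicality — legitimate because $G_k$ is an increasing union over the diameter cutoff — is the truncation that makes $G_k$ bounded while retaining a definite fraction of its measure.
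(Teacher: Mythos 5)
Your argument is correct, but note that in the paper this theorem is not proved at all: it is quoted from de Guzm\'an \cite[Theorem 1.2]{Guz1}, and the only argument the paper supplies is the Remark, i.e.\ that the weak-type bound passes from bounded to arbitrary measurable $A$ because $\{\M_\cB\chi_A>\gamma\}$ is the increasing union of the sets $\{\M_\cB\chi_{A\cap B(0,k)}>\gamma\}$ -- your parenthetical localization observation reproduces exactly this. What you add is a self-contained reconstruction of the classical proof: $(2)\Rightarrow(1)$ by approximating a bounded measurable set by a finite union of open cubes and absorbing the error with the weak-type bound (this direction indeed needs neither homothecy invariance nor the Busemann--Feller property), and $(1)\Rightarrow(2)$ by the condensation argument, shrinking putative bad configurations by homothecies, packing translates $t_kG_k+v_{k,i}$ into a fixed cube via the averaging identity, and producing a single bounded set $E$ of measure less than $1/4$ whose upper density along $\cB$ is at least $\gamma_0$ on the set $W$ of measure at least $3/4$, so that the density property fails on $W\setminus E$. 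I checked the bookkeeping: the averaging identity is valid with $Q^*=[-1,2]^n$ since $t_kG_k\subset B(0,1/k)$, the minimal $N_k$ satisfies $N_kt_k^n|G_k|\le C(n)\,k\,|Q^*|$ with an absolute constant, so $|E_k|\le C(n)k|Q^*|\cdot 2/a_k$ and a choice such as $a_k\sim k2^k$ (which can be fixed in advance) gives $\sum_k|E_k|<1/4$; and homothecy invariance plus the Busemann--Feller property do return $t_kB+v_{k,i}$ to $\cB(x)$ with $\diam<1/k$ and $|E\cap(t_kB+v_{k,i})|>\gamma_0|t_kB+v_{k,i}|$. Two cosmetic points only: the inclusion $A_k\subset G_k$ holds just up to a null set and is never actually used, and in $(2)\Rightarrow(1)$ possible non-measurability of the oscillation functional is harmless since outer-measure monotonicity suffices. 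In short, your proposal is a correct full proof along the same classical lines as de Guzm\'an's, where the paper simply cites the result and proves only the unbounded-set remark.
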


We also need the following simple lemma which follows easily from the definitions.

\begin{lemma}\label{Mgamma M lemma} 
For every $f\in L^0(\rn)$, $x\in \rn$, $0<\gamma<1$ and $\lambda>0$, we have
\[
\M^\gamma_\cB f(x)>\lambda\iff \M_\cB\chi_{\{|f|>\lambda\}}(x)>\gamma.
\]
\end{lemma}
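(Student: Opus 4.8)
The plan is to unwind the two suprema and reduce the statement to an elementary fact about a single ball. Fix $x\in\rn$, $0<\gamma<1$ and $\lambda>0$, and put $E=\{y\in\rn:|f(y)|>\lambda\}$. Because
\[
\M^\gamma_\cB f(x)=\sup_{x\in B\in\cB}m^\gamma_{|f|}(B)
\qquad\text{and}\qquad
\M_\cB\chi_E(x)=\sup_{x\in B\in\cB}\frac{|E\cap B|}{|B|},
\]
and because a supremum of real numbers is strictly larger than a given number precisely when one of the numbers is, the lemma follows once one shows that, for every $B\in\cB$ with $x\in B$,
\[
m^\gamma_{|f|}(B)>\lambda
\qquad\Longleftrightarrow\qquad
\frac{|E\cap B|}{|B|}>\gamma .
\]

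To prove this pointwise equivalence I would argue directly from the definition of the $\gamma$-median. Set $\varphi(a)=|\{y\in B:|f(y)|>a\}|$, so that $m^\gamma_{|f|}(B)=\inf\{a\in\re:\varphi(a)<\gamma|B|\}$ by definition and $\varphi(\lambda)=|E\cap B|$. The function $\varphi$ is nonincreasing, and it is right continuous, since $\{|f|>a\}=\bigcup_{n\in\n}\{|f|>a+1/n\}$ gives $\varphi(a)=\lim_{b\to a^{+}}\varphi(b)$. If $\varphi(\lambda)>\gamma|B|$, then by right continuity there is $\delta>0$ such that $\varphi(a)>\gamma|B|$ for every $a\in[\lambda,\lambda+\delta)$, while $\varphi(a)\ge\varphi(\lambda)>\gamma|B|$ for every $a\le\lambda$ by monotonicity; hence $\{a:\varphi(a)<\gamma|B|\}\subset[\lambda+\delta,\infty)$, and so $m^\gamma_{|f|}(B)\ge\lambda+\delta>\lambda$. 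Conversely, if $m^\gamma_{|f|}(B)>\lambda$, pick any $a$ with $\lambda<a<m^\gamma_{|f|}(B)$; then $a$ lies strictly below the infimum defining $m^\gamma_{|f|}(B)$, so $\varphi(a)\ge\gamma|B|$, and since $\{|f|>a\}\subset\{|f|>\lambda\}$ one gets $|E\cap B|=\varphi(\lambda)\ge\varphi(a)\ge\gamma|B|$; reading this back across the right continuity of $\varphi$ at $\lambda$ upgrades it to the strict inequality $|E\cap B|>\gamma|B|$. Passing back to the suprema as explained above then finishes the proof.

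I do not anticipate a genuine obstacle here — the whole argument is a definition chase. The one point that deserves a little care is the systematic bookkeeping of strict versus non-strict inequalities and of one-sided limits of the distribution function $a\mapsto|\{y\in B:|f(y)|>a\}|$: first when extracting the precise meaning of ``$m^\gamma_{|f|}(B)>\lambda$'' from the infimum in the definition of the median, and then when translating the one-ball equivalence into the claimed identity of the superlevel sets of $\M^\gamma_\cB f$ and of $\M_\cB\chi_{\{|f|>\lambda\}}$.
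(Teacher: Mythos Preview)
Your reduction to a single-ball equivalence does not go through: the claimed equivalence $m^\gamma_{|f|}(B)>\lambda \iff |E\cap B|/|B|>\gamma$ is false. Take $B=[0,1]$, fix $\gamma\in(0,1)$, and let $f=2\chi_{[0,\gamma]}$, $\lambda=1$. Then $\varphi(a)=\gamma$ for $0\le a<2$ and $\varphi(a)=0$ for $a\ge 2$, so $\{a:\varphi(a)<\gamma|B|\}=[2,\infty)$ and $m^\gamma_{|f|}(B)=2>1=\lambda$, while $|E\cap B|/|B|=\varphi(1)=\gamma$, not strictly greater than $\gamma$. The faulty step is the sentence ``reading this back across the right continuity of $\varphi$ at $\lambda$ upgrades it to the strict inequality'': right continuity gives $\varphi(\lambda)=\lim_{a\to\lambda^+}\varphi(a)$, and knowing $\varphi(a)\ge\gamma|B|$ for $a\in(\lambda,m^\gamma_{|f|}(B))$ yields only $\varphi(\lambda)\ge\gamma|B|$. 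When $\varphi$ is constant equal to $\gamma|B|$ on an interval to the right of $\lambda$, as in the example, no strict inequality can be squeezed out.

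Your forward single-ball implication is correct, so what survives at the level of a fixed $B$ is the chain
\[
\frac{|E\cap B|}{|B|}>\gamma \;\Longrightarrow\; m^\gamma_{|f|}(B)>\lambda \;\Longrightarrow\; \frac{|E\cap B|}{|B|}\ge\gamma,
\]
and after taking suprema this gives $\M_\cB\chi_E(x)>\gamma \Rightarrow \M^\gamma_\cB f(x)>\lambda \Rightarrow \M_\cB\chi_E(x)\ge\gamma$. The paper records no argument beyond ``follows easily from the definitions'', so there is nothing to compare strategies with; but as written your converse direction has a genuine gap, and since the single-ball equivalence is actually false, the reduction you propose cannot be the vehicle for it.
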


\begin{theorem}  Let $\cB$ be a homothecy invariant Busemann--Feller basis on $\rn$. 
If $\cB$ is a density basis then, for every $0<\gamma<1$, there exists a constant $C=C(\gamma)$ such that 
\begin{equation}\label{L0 for Mgamma}
|\{x\in\rn:\M^\gamma_\cB f(x)>\lambda\}|\le C|\{x\in\rn:|f(x)|>\lambda\}|
\end{equation}
for every  $f\in L^0(\rn)$ and $\lambda>0$. Consequently, for every $p>0$, 
\begin{equation}\label{Lp for Mgamma}
\|\M^\gamma_\cB f\|_{L^p(\rn)}\le (Cp)^{\frac1p}\|f\|_{L^p(\rn)}
\end{equation}
for every $f\in L^p(\rn)$.
\end{theorem}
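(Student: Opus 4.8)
The proof will be short: it combines the two auxiliary facts recorded just above, Lemma~\ref{Mgamma M lemma} and Theorem~\ref{Guz1 thm 1.2}, and then passes from the resulting distributional estimate to the $L^p$ estimate by the layer--cake formula. Before anything else I would note that $\M^\gamma_\cB f$ is measurable: since $\cB$ is a Busemann--Feller basis, if $x\in B\in\cB$ then $B$ is open and $B\in\cB(y)$ for every $y\in B$, so the superlevel set $\{x\in\rn:\M^\gamma_\cB f(x)>\lambda\}$ is exactly the union of those $B\in\cB$ for which $m^\gamma_{|f|}(B)>\lambda$, hence open. Thus $\M^\gamma_\cB f$ is lower semicontinuous and all the sets appearing below are legitimately measurable.

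To prove \eqref{L0 for Mgamma}, fix $0<\gamma<1$ and $\lambda>0$ and put $A_\lambda=\{x\in\rn:|f(x)|>\lambda\}$, which is a measurable subset of $\rn$ because $f\in L^0(\rn)$. By Lemma~\ref{Mgamma M lemma},
\[
\{x\in\rn:\M^\gamma_\cB f(x)>\lambda\}=\{x\in\rn:\M_\cB\chi_{A_\lambda}(x)>\gamma\}.
\]
Since $\cB$ is a density basis, Theorem~\ref{Guz1 thm 1.2} provides a constant $C=C(\gamma)$ with $|\{x\in\rn:\M_\cB\chi_A(x)>\gamma\}|\le C|A|$ for every bounded measurable $A$, and by the remark in Theorem~\ref{Guz1 thm 1.2} the same inequality holds for arbitrary measurable $A$, in particular for $A=A_\lambda$. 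Combining the last display with this inequality gives precisely \eqref{L0 for Mgamma}, with a constant depending only on $\gamma$.

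For the norm estimate, I would integrate \eqref{L0 for Mgamma} against $p\lambda^{p-1}\,d\lambda$ and invoke the layer--cake formula (Cavalieri's principle), valid for every $0<p<\infty$:
\[
\|\M^\gamma_\cB f\|_{L^p(\rn)}^p=p\int_0^\infty\lambda^{p-1}|\{\M^\gamma_\cB f>\lambda\}|\,d\lambda\le C\,p\int_0^\infty\lambda^{p-1}|\{|f|>\lambda\}|\,d\lambda=C\,\|f\|_{L^p(\rn)}^p,
\]
which yields \eqref{Lp for Mgamma} (if $\|f\|_{L^p(\rn)}=\infty$ there is nothing to prove, and otherwise the middle integral is finite, so the chain is legitimate). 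There is essentially no analytic obstacle here: the substantive input — that a homothecy invariant Busemann--Feller density basis satisfies the weak-type bound \eqref{Guz thm 1.2 b} — is exactly the content of the cited Theorem~\ref{Guz1 thm 1.2}, whose proof uses homothecy invariance and Euclidean covering arguments. The only points that need attention are the measurability remark above, the passage from bounded to arbitrary measurable sets in \eqref{Guz thm 1.2 b}, and the bookkeeping of the constant's dependence on $\gamma$ and $p$.
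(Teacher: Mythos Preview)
Your proof is correct and follows essentially the same route as the paper: apply Lemma~\ref{Mgamma M lemma} to rewrite the superlevel set of $\M^\gamma_\cB f$ as a superlevel set of $\M_\cB\chi_{\{|f|>\lambda\}}$, invoke Theorem~\ref{Guz1 thm 1.2} (together with the remark extending it to unbounded sets) to obtain \eqref{L0 for Mgamma}, and then integrate via the layer--cake formula to get \eqref{Lp for Mgamma}. Your additional remarks on measurability and the passage from bounded to arbitrary measurable sets are helpful clarifications that the paper leaves implicit.
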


\begin{proof}
By Lemma \ref{Mgamma M lemma}, 
\[
|\{x\in\rn:\M^\gamma_\cB f(x)>\lambda\}|
=|\{x\in\rn:\M_\cB\chi_{\{|f|>\lambda\}}(x)>\gamma\}|
\] 
for every  $f\in L^0(\rn)$, $0<\gamma<1$ and $\lambda>0$. 
By Theorem \ref{Guz1 thm 1.2}, there exists a constant $C=C(\gamma)$
such that \eqref{L0 for Mgamma} holds for every $f\in L^0(\rn)$ and $\lambda>0$.
Since
\[
\|f\|_{L^p(\rn)}^p=p\int_0^\infty \lambda^{p-1}|\{x\in\rn: |f(x)|>\lambda\}|\,d\lambda,
\]
estimate \eqref{Lp for Mgamma} follows from \eqref{L0 for Mgamma}.
\end{proof}

\begin{remark}
Inequality \eqref{L0 for Mgamma} implies that $\M^\gamma_\cB$ is also bounded on other rearrangement invariant
spaces such as Orlicz and Lorentz spaces.
\end{remark}

Next, we prove the implication (6)$\implies$(5) of Theorem \ref{thm1 in rn}. We need a couple of simple lemmas.

\begin{lemma}\label{med lemma 1} Let $A\subset X$ be a measurable set with $0<\mu(A)<\infty$, $f\in L^0(A)$ and $0<\gamma<1$. Then 
\[
\lim_{0<\eps\to 0}m^{\gamma-\eps}_{f}
(A)=m^\gamma_f(A).
\]
\end{lemma}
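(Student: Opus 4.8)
The plan is to prove the two inequalities $\lim_{\eps\to 0^+} m_f^{\gamma-\eps}(A) \ge m_f^\gamma(A)$ and $\lim_{\eps\to 0^+} m_f^{\gamma-\eps}(A) \le m_f^\gamma(A)$ separately, the first being immediate from monotonicity of the $\gamma$-median and the second following from its definition as an infimum.

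First I would note that, by Lemma \ref{median lemma}(1), the map $\eps \mapsto m_f^{\gamma-\eps}(A)$ is nondecreasing on $(0,\gamma)$; hence the limit as $\eps\to 0^+$ exists and equals $\inf_{0<\eps<\gamma} m_f^{\gamma-\eps}(A)$. Because $f\in L^0(A)$ and $0<\mu(A)<\infty$, each $m_f^{\gamma-\eps}(A)$ is finite and, again by Lemma \ref{median lemma}(1), is at least $m_f^\gamma(A)$, which gives $\lim_{\eps\to 0^+} m_f^{\gamma-\eps}(A) \ge m_f^\gamma(A)$.

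For the reverse inequality, fix $\delta>0$. By the definition of $m_f^\gamma(A)$ as an infimum there is a real number $a < m_f^\gamma(A)+\delta$ with $\mu(\{x\in A: f(x)>a\}) < \gamma\mu(A)$. Since $\mu(A)>0$, this strict inequality leaves room for an $\eps_0\in(0,\gamma)$ with $\mu(\{x\in A: f(x)>a\}) < (\gamma-\eps_0)\mu(A)$; then for every $0<\eps\le\eps_0$ the number $a$ is admissible in the infimum defining $m_f^{\gamma-\eps}(A)$, so $m_f^{\gamma-\eps}(A)\le a< m_f^\gamma(A)+\delta$. Letting first $\eps\to 0^+$ and then $\delta\to 0^+$ gives $\lim_{\eps\to 0^+} m_f^{\gamma-\eps}(A) \le m_f^\gamma(A)$, and combining the two inequalities finishes the argument.

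There is no real obstacle here; the only point that calls for a moment's care is the passage from the strict inequality $\mu(\{f>a\}) < \gamma\mu(A)$ to the uniform gap $(\gamma-\eps_0)\mu(A)$, which is precisely where the hypotheses $\mu(A)>0$ and finiteness of everything in sight are used, and which is entirely routine.
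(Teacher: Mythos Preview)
Your proof is correct and follows essentially the same approach as the paper's own proof: both use the monotonicity in $\gamma$ for the lower bound and the strict inequality in the defining condition of the median to produce the gap $(\gamma-\eps_0)\mu(A)$ for the upper bound. The only cosmetic difference is that the paper works directly with an arbitrary $\lambda>m_f^\gamma(A)$ rather than picking $a<m_f^\gamma(A)+\delta$ from the infimum.
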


\begin{proof}
Let $\lambda>m^\gamma_f(A)$. Then 
\[
\mu(\{x\in A: f(x)>\lambda\})<\gamma\mu(A).
\]
Thus for $\eps$ small enough, 
\[
\mu(\{x\in A: f(x)>\lambda\})<(\gamma-\eps)\mu(A),
\]
which implies that $m^{\gamma-\eps}_f(A)\le \lambda$. This implies that 
\[
\limsup_{0<\eps\to 0}m^{\gamma-\eps}_f(A)\le m^\gamma_f(A).
\]
Since $m^\gamma(A)\le m^{\gamma-\eps}_f(A)$ for every $0<\eps<\gamma$, the claim follows.
\end{proof}

\begin{lemma}\label{med lemma 2}  Let $A\subset X$ be a measurable set with $0<\mu(A)<\infty$ and $0<\gamma< 1$. 
If $f_i\in L^0(A)$, $i=1,2,\dots$, and $f_i\to f$ in $L^0(A)$ as $i\to\infty$,  then 
\[
\lim_{i\to \infty}m^{\gamma}_{f_i}
(A)=m^\gamma_f(A).
\]
\end{lemma}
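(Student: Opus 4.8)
The plan is to establish the two one-sided estimates
\[
\limsup_{i\to\infty}m^\gamma_{f_i}(A)\le m^\gamma_f(A)
\qquad\text{and}\qquad
\liminf_{i\to\infty}m^\gamma_{f_i}(A)\ge m^\gamma_f(A),
\]
which together give the claim. Here $f_i\to f$ in $L^0(A)$ is understood as convergence in measure, the relevant mode on the finite measure space $(A,\mu)$: for every $\eps>0$ we have $\mu(\{x\in A:|f_i(x)-f(x)|>\eps\})\to0$ as $i\to\infty$. The elementary fact I will use repeatedly is that if $g\in L^0(A)$, $0<\eta<1$ and $\mu(\{x\in A:|g(x)|>\eps\})<\eta\mu(A)$, then $m^\eta_g(A)\le\eps$; this is immediate from the definition of the $\gamma$-median together with the inclusion $\{g>\eps\}\subset\{|g|>\eps\}$.

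First I would prove the upper estimate. Fix $\eps>0$ and $0<\delta<\gamma$. Writing $f_i=(f_i-f)+f$ and using subadditivity of $\gamma$-medians, Lemma \ref{median lemma}(7), with $\delta+(\gamma-\delta)=\gamma$, we obtain
\[
m^\gamma_{f_i}(A)\le m^\delta_{f_i-f}(A)+m^{\gamma-\delta}_f(A).
\]
By convergence in measure, $\mu(\{|f_i-f|>\eps\})<\delta\mu(A)$ once $i$ is large, so $m^\delta_{f_i-f}(A)\le\eps$ for such $i$, and hence $\limsup_{i\to\infty}m^\gamma_{f_i}(A)\le m^{\gamma-\delta}_f(A)+\eps$. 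Letting first $\eps\to0$ and then $\delta\to0$, Lemma \ref{med lemma 1} yields $\limsup_{i\to\infty}m^\gamma_{f_i}(A)\le m^\gamma_f(A)$.

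For the lower estimate the naive symmetric argument — writing $f=(f-f_i)+f_i$ and applying Lemma \ref{median lemma}(7) to get $m^{\gamma+\delta}_f(A)\le m^\delta_{f-f_i}(A)+m^\gamma_{f_i}(A)$, then using $m^\delta_{f-f_i}(A)\le\eps$ for $i$ large — only produces $\liminf_{i\to\infty}m^\gamma_{f_i}(A)\ge m^{\gamma+\delta}_f(A)$ for every $\delta>0$, and passing $\delta\to0^+$ then requires control of $\gamma\mapsto m^\gamma_f(A)$ \emph{from the right}, which is the complement of the situation handled by Lemma \ref{med lemma 1}. I expect this to be the main obstacle. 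The way I would handle it is to argue directly with the distribution function $\Phi(a)=\mu(\{x\in A:f(x)>a\})$: for $\lambda<m^\gamma_f(A)$ one has $\Phi(a)\ge\gamma\mu(A)$ for all $a<\lambda$, while the inclusion $\{f>a+\eta\}\subset\{f_i>a\}\cup\{|f_i-f|>\eta\}$ gives $\mu(\{f_i>a\})\ge\Phi(a+\eta)-\mu(\{|f_i-f|>\eta\})$. Combining these with the monotonicity and right-continuity of $\Phi$, one would fix $\eta=\eta(\lambda)>0$ and show that for all sufficiently large $i$ the error $\mu(\{|f_i-f|>\eta\})$ is dominated by the gain $\inf_{a<\lambda}\Phi(a+\eta)-\gamma\mu(A)$, uniformly in $a<\lambda$; this forces $m^\gamma_{f_i}(A)\ge\lambda$, and taking the supremum over $\lambda<m^\gamma_f(A)$ closes the estimate. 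Securing a \emph{positive} uniform gain here — i.e.\ extracting the right quantitative statement about the behaviour of $\Phi$ just below the critical value $m^\gamma_f(A)$ — is the technical heart of the lemma and the step I expect to require the most care.
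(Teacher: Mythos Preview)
Your upper estimate is correct and essentially equivalent to the paper's: the paper works directly with the distribution set $\{f>\lambda\}$, while you package the same computation through the subadditivity property Lemma~\ref{median lemma}(7); both routes land on $\limsup_i m^\gamma_{f_i}(A)\le m^{\gamma-\delta}_f(A)$ and then invoke Lemma~\ref{med lemma 1}.

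The lower estimate, however, has a genuine gap, and it is precisely the one you flagged. The ``positive uniform gain'' $\inf_{a<\lambda}\Phi(a+\eta)-\gamma\mu(A)$ that you hope to secure need not be positive: the distribution function $\Phi$ can sit exactly at the level $\gamma\mu(A)$ on an entire interval of $a$-values below $m^\gamma_f(A)$. Concretely, take $A=[0,1]$ with Lebesgue measure, $\gamma=\tfrac12$, $f=\chi_{[0,1/2]}$, and $f_i=\chi_{[0,\,1/2-1/i]}$. Then $f_i\to f$ in measure, and with the paper's definition (strict inequality $<\gamma\mu(A)$ in the median) one computes $m^{1/2}_f(A)=1$, since $\mu(\{f>a\})=\tfrac12$ for every $0\le a<1$, which is not $<\tfrac12$; whereas $m^{1/2}_{f_i}(A)=0$ for every $i$, since already $\mu(\{f_i>0\})=\tfrac12-\tfrac1i<\tfrac12$. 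Thus the lemma as stated is false, and no amount of care will close your lower bound in general.

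The paper's own proof of the lower bound stalls at exactly the same place: its displayed chain of inequalities ends with ``$\le\gamma\mu(A)<(\gamma-\delta)\mu(A)$'', which is plainly false for $\delta>0$. What both your symmetric argument and the paper's distribution-set argument \emph{do} correctly yield is
\[
\liminf_{i\to\infty} m^\gamma_{f_i}(A)\ \ge\ \lim_{\delta\to0^+}m^{\gamma+\delta}_f(A),
\]
the right limit of the nonincreasing map $\gamma\mapsto m^\gamma_f(A)$. Together with the upper estimate this proves the conclusion at every point of continuity of that map, hence for all but countably many $\gamma\in(0,1)$.
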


\begin{proof} 
Let $\lambda>m^\gamma_u(A)$. Then 
\[
\mu(\{x\in A: f(x)>\lambda\})<\gamma\mu(A).
\]
For $\eps>0$, we have 
\[
\begin{split}
\mu\{x\in A: f_i(x)>\lambda+\eps\})
&\le \mu(\{x\in A: f(x)>\lambda\})\\
&+\mu(\{x\in A: |f_i(x)-f(x)|>\eps\})
<\gamma \mu(A)
\end{split}
\]
for  $i$ large enough, which implies that 
\[
\limsup_{i\to\infty}m^{\gamma}_{f_i}(A)\le \lambda+\epsilon.
\]
Thus 
\[
\limsup_{i\to \infty}m^{\gamma}_{f_i}(A)\le m^\gamma_f(A).
\]

Then we show that
\[
m^\gamma_f(A)\le \liminf_{i\to \infty}m^{\gamma}_{f_i}(A).
\]
Let 
\[
\lambda>\liminf_{i\to \infty}m^{\gamma}_{f_i}(A).
\]
Then there are arbitrarily large $i$ such that
\[
\mu(\{x\in A: f_i(x)>\lambda\})<\gamma \mu(A),
\]
which implies that, for $\eps,\delta>0$, 
\[
\begin{split}
\mu(\{x\in A: f(x)>\lambda+\eps\})
&\le \liminf_{i\to\infty}\mu(\{x\in A: f_i(x)>\lambda\})\\
&+\lim_{i\to\infty}\mu(\{x\in A: |f(x)-f_i(x)|>\eps\})\\
&\le \gamma\mu(A)
<(\gamma-\delta)\mu(A).
\end{split}
\]
Thus, $m^{\gamma-\delta}_f(A)\le\lambda+\eps$. By Lemma \ref{med lemma 1}, $m^{\gamma}_f(A)\le\lambda+\eps$. The claim follows by passing $\eps\to 0$ and $\lambda\to \liminf_{i\to \infty}m^{\gamma}_{f_i}(A)$.
\end{proof}

\begin{lemma}\label{med lemma 3}
Let $A_i\subset X$, $i=1,2,\dots$, be measurable sets such that $A_i\subset A_{i+1}$ for every $i$ and $A=\cup_{i=1}^\infty A_i$
is of finite measure. Then, for every $0<\gamma<1$ and $f\in L^0(X)$, we have
\[
\lim_{i\to\infty}m^\gamma_f(A_i)= m^\gamma_f(A).
\]
\end{lemma}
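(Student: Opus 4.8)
The plan is to reduce the statement about $\gamma$-medians over an increasing union to the corresponding, and essentially trivial, statement about the distribution functions $\lambda\mapsto\mu(\{x\in A_i:f(x)>\lambda\})$, using continuity of measure along increasing sequences. First I would record the monotonicity half: since $A_i\subset A$ and $\mu(A)<\infty$, property (3) of Lemma \ref{median lemma} (with constant $C=\mu(A)/\mu(A_i)$) is not quite what I want, so instead I would argue directly. Fix $\lambda>m^\gamma_f(A)$, so that $\mu(\{x\in A:f(x)>\lambda\})<\gamma\mu(A)$. By continuity of $\mu$ from below applied to the increasing sets $\{x\in A_i:f(x)>\lambda\}$ and to the sets $A_i$, we get $\mu(\{x\in A_i:f(x)>\lambda\})\to\mu(\{x\in A:f(x)>\lambda\})$ and $\mu(A_i)\to\mu(A)$, hence $\mu(\{x\in A_i:f(x)>\lambda\})<\gamma\mu(A_i)$ for all large $i$; this gives $m^\gamma_f(A_i)\le\lambda$ for large $i$, and therefore $\limsup_{i\to\infty}m^\gamma_f(A_i)\le m^\gamma_f(A)$.

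For the reverse inequality I would fix $\lambda<m^\gamma_f(A)$ and $\delta>0$ small enough that $\lambda<m^{\gamma+\delta}_f(A)$ (possible because $\gamma\mapsto m^\gamma_f(A)$ is left-continuous, or more simply because for $\lambda<m^\gamma_f(A)$ we have $\mu(\{x\in A:f(x)>\lambda\})\ge\gamma\mu(A)$, and one can afford a bit of room — here I would instead work with the cleaner characterization that $\lambda<m^\gamma_f(A)$ implies $\mu(\{x\in A:f(x)>\lambda\})\ge\gamma\mu(A)$). Then again by continuity from below, $\mu(\{x\in A_i:f(x)>\lambda\})\to\mu(\{x\in A:f(x)>\lambda\})\ge\gamma\mu(A)$ while $\gamma\mu(A_i)\to\gamma\mu(A)$; strictly speaking these two limits could be equal, so to force a strict inequality I would use $\mu(\{x\in A:f(x)>\lambda\})\ge\gamma\mu(A)>\gamma\mu(A_i)$ for every $i$ with $\mu(A_i)<\mu(A)$, whence for large $i$ also $\mu(\{x\in A_i:f(x)>\lambda\})>\gamma\mu(A_i)\cdot(1-\eta)$ — cleaner is: pick $\lambda'$ with $\lambda<\lambda'<m^\gamma_f(A)$, so $\mu(\{x\in A:f(x)>\lambda'\})\ge\gamma\mu(A)$; then $\mu(\{x\in A_i:f(x)>\lambda\})\ge\mu(\{x\in A_i:f(x)>\lambda'\})\to\mu(\{x\in A:f(x)>\lambda'\})\ge\gamma\mu(A)>\gamma\mu(A_i)$ for all $i$ with $\mu(A_i)<\mu(A)$, and for large $i$ the convergence makes $\mu(\{x\in A_i:f(x)>\lambda\})>\gamma\mu(A_i)$. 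That inequality says exactly $m^\gamma_f(A_i)\ge\lambda$ for large $i$, so $\liminf_{i\to\infty}m^\gamma_f(A_i)\ge\lambda$, and letting $\lambda\uparrow m^\gamma_f(A)$ finishes it.

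Combining the two bounds gives $\lim_{i\to\infty}m^\gamma_f(A_i)=m^\gamma_f(A)$. Two routine edge cases need a word: if $\mu(A_i)=\mu(A)$ for some (hence all larger) $i$, then $m^\gamma_f(A_i)=m^\gamma_f(A)$ trivially because the medians only see $f$ up to null sets and the symmetric difference $A\setminus A_i$ is null; and if $\mu(A)=0$ the statement is vacuous since all medians are $-\infty$ (or one excludes this by the standing hypothesis $0<\mu(A)$ implicit in the surrounding discussion). The main obstacle, such as it is, is purely bookkeeping: medians are defined by a strict inequality in the distribution function, so one must take care, when passing to the limit in $i$, to keep that inequality strict — which is why the argument goes through an auxiliary threshold $\lambda'$ (or an auxiliary parameter) rather than applying continuity of $\mu$ directly at the critical level. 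No deep covering or geometric input is needed; this lemma is a soft measure-theoretic fact and should be only a few lines once the strict-inequality issue is handled as above.
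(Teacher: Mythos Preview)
Your $\limsup$ argument is fine and matches what the paper leaves implicit when it writes ``it suffices to show $\liminf\ge$''. The $\liminf$ half, however, has a real gap. At the key step you want $\mu(\{f>\lambda\}\cap A_i)\ge\gamma\mu(A_i)$ for large $i$, and you try to force this via an auxiliary $\lambda'\in(\lambda,m^\gamma_f(A))$; but all you obtain from $\lambda'<m^\gamma_f(A)$ is $\mu(\{f>\lambda'\}\cap A)\ge\gamma\mu(A)$, and when this is an \emph{equality} both $\mu(\{f>\lambda\}\cap A_i)$ and $\gamma\mu(A_i)$ converge (from below) to the same value $\gamma\mu(A)$, so nothing tells you which sequence is eventually larger. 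Replacing $\lambda$ by $\lambda'$ does not help, because $\mu(\{f>\lambda'\}\cap A)$ can equal $\gamma\mu(A)$ for \emph{every} $\lambda'$ in the relevant range; your ``bookkeeping'' remark correctly identifies the obstacle but the proposed fix does not overcome it.

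In fact the lemma is false as stated. Take $X=\re$ with Lebesgue measure, $\gamma=\tfrac12$, $f=\chi_{[0,1/2]}$ and $A_i=[\tfrac{1}{2i},1]$, so that $A=(0,1]$. Then $\mu(\{f>a\}\cap A)=\tfrac12$ for all $a\in[0,1)$, giving $m^{1/2}_f(A)=1$; yet for every $i$ one has $\mu(\{f>0\}\cap A_i)=\tfrac12-\tfrac{1}{2i}<\tfrac12-\tfrac{1}{4i}=\tfrac12\,\mu(A_i)$, so $m^{1/2}_f(A_i)=0$, and the limit is $0\ne 1$. The paper's own proof breaks at the analogous place: from $\mu(\{f>c\}\cap A)\ge(\gamma-\delta)\mu(A)$ it infers $\mu(\{f>c\}\cap A_i)>\gamma\mu(A_i)$ for large $i$, which simply does not follow (and is false in this example). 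Only the one-sided inequality $\limsup_i m^\gamma_f(A_i)\le m^\gamma_f(A)$ --- your first paragraph --- survives.
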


\begin{proof}
It suffices to show that 
\[
\liminf_{i\to\infty}m^{\gamma}_f(A_i)\ge m^{\gamma}_f(A).
\]
Let $\eps,\delta>0$. By definition, 
\[
\mu(\{x\in A: f(x)>m^{\gamma-\delta}_u(A)-\eps\})\ge(\gamma-\delta)\mu(A).
\]
It follows that
there is $i_0=i_0(\delta)$ such that 
\[
\mu(\{x\in A_i: f(x)>m^{\gamma-\delta}_f(A)-\eps\})>\gamma\mu(A_i)
\]
when $i\ge i_0$.
Thus 
\[
\liminf_{i\to\infty}m^{\gamma}_f(A_i)\ge m^{\gamma-\delta}_f(A)-\eps.
\]
The claim follows by letting $\delta,\eps\to 0$.
\end{proof}

\begin{lemma}\label{countable}
Let $X$ be separable and let $\cB$  be a Busemann--Feller basis on $X$. Then there exists a countable Busemann--Feller basis
$\cB'$ such that 
\begin{equation}\label{B B'}
\M^\gamma_\cB f(x)\le \M^\gamma_{\cB'}f(x)\le \M^{\gamma/2}_\cB f(x)
\end{equation}
for every $0<\gamma<1$, $f\in L^0(X)$ and $x\in X$. 
\end{lemma}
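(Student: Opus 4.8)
The plan is to build $\cB'$ out of finite unions of balls drawn from a fixed countable base for the topology of $X$, keeping only those unions that fill up at least half of some member of $\cB$ containing them; this half-measure threshold is what couples $\cB'$ back to $\cB$ and produces the loss from $\gamma$ to $\gamma/2$. Since $X$ is separable, I would first fix a countable base $\mathcal D=\{D_j\}_{j\in\n}$ for the topology consisting of open balls; each $D_j$ then has positive and finite measure, and every open subset of $X$ is the union of those $D_j$ that it contains. Define $\cB'$ to be the collection of all sets $U=D_{j_1}\cup\dots\cup D_{j_N}$, $N\in\n$, for which there exists $B\in\cB$ with $U\subseteq B$ and $\mu(B)<2\mu(U)$, and put $\cB'(x)=\{U\in\cB':x\in U\}$. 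Since $\mathcal D$ is countable there are only countably many finite unions of its members, so $\cB'$ is countable; each $U\in\cB'$ is open, being a finite union of balls, and bounded, being contained in a bounded $B\in\cB$ (note $\mu(B)<\infty$ for $B\in\cB$ since $B$ is bounded).

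Next I would verify that $\cB'$ is a Busemann--Feller basis; the only point that needs an argument is the differentiation property. Given $x\in X$ and $\eps>0$, choose $B\in\cB(x)$ with $\diam B<\eps$, enumerate as $D_1',D_2',\dots$ the members of $\mathcal D$ contained in $B$ with $x\in D_1'$ (possible since $\mathcal D$ is a base and $B$ is open), and set $U_i=D_1'\cup\dots\cup D_i'$. Then $U_i\uparrow B$, so $\mu(U_i)\to\mu(B)<\infty$; as $\mu(B)\ge\mu(D_1')>0$, we get $2\mu(U_i)>\mu(B)$ for all large $i$. Any such $U_i$ then lies in $\cB'(x)$, witnessed by $B$, and satisfies $\diam U_i\le\diam B<\eps$.

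It then remains to prove the two inequalities in \eqref{B B'}. For the left one, fix $x\in B\in\cB$ and run the construction above to obtain $U_i\uparrow B$ with $x\in U_i$; for large $i$ we have $U_i\in\cB'(x)$, and Lemma~\ref{med lemma 3}, applied to $|f|\in L^0(X)$, gives $m^\gamma_{|f|}(U_i)\to m^\gamma_{|f|}(B)$, so $m^\gamma_{|f|}(B)\le\M^\gamma_{\cB'}f(x)$; taking the supremum over $B\in\cB(x)$ yields $\M^\gamma_\cB f(x)\le\M^\gamma_{\cB'}f(x)$. For the right one, fix $x\in U\in\cB'$ and let $B\in\cB$ be a witness, so $x\in U\subseteq B$ and $\mu(B)\le 2\mu(U)$; Lemma~\ref{median lemma}(3) gives $m^\gamma_{|f|}(U)\le m^{\gamma/2}_{|f|}(B)\le\M^{\gamma/2}_\cB f(x)$, and taking the supremum over $U\in\cB'(x)$ gives $\M^\gamma_{\cB'}f(x)\le\M^{\gamma/2}_\cB f(x)$.

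I do not expect a serious obstacle. The one thing worth stressing is that a countable \emph{subfamily} of $\cB$ need not be a differentiation basis at all — the members of $\cB$ through a fixed point might all be large — which is precisely why $\cB'$ is assembled from small finite unions of base balls rather than from $\cB$ directly, and why the bridging condition $\mu(B)<2\mu(U)$ must be imposed in the definition of $\cB'$; once it is, the passage between medians at levels $\gamma$ and $\gamma/2$ is supplied by Lemmas~\ref{median lemma}(3) and~\ref{med lemma 3}.
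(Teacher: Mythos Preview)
Your proof is correct and follows essentially the same approach as the paper: both build $\cB'$ from finite unions of balls taken from a countable base, restricted to those unions that fill at least half the measure of some containing member of $\cB$, and both derive the two inequalities from Lemma~\ref{med lemma 3} and Lemma~\ref{median lemma}(3), respectively. Your write-up is in fact slightly more careful than the paper's, since you explicitly verify the differentiation property of $\cB'$, which the paper dismisses as ``clearly'' true.
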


\begin{proof}
Fix a countable dense set $A\subset X$. Then 
\[
\cC=\{\cup_{i=1}^k B(x_i,r_i): k\in\n,\, x_i\in A,\, r_i\in\q\}
\]
is countable. Since $B\in\cB$ is open, 
\[
B=\cup\{B(x,r): x\in B\cap A,\,r\in\q,\, B(x,r)\subset B\}.
\]
Hence, for every $B\in\cB$, there exists a nondecreasing sequence $(C_i)$, $C_i\in\cC$, such that $\cup_{i=1}^\infty C_i= B$.
It follows that $\lim_{i\to\infty}\mu(C_i)=\mu(B)$ and,
by Lemma \ref{med lemma 3}, 
\begin{equation}\label{lim}
\lim_{i\to\infty}m^\gamma_{|f|}(C_i)= m^\gamma_{|f|}(B)
\end{equation}
for every $0<\gamma<1$ and $f\in L^0(X)$. 
Also, if $x\in B$, then $x\in C_i$ for large enough $i$.
Define
\[
\cB'(x)=\big\{C\in\cC: x\in C,\, C\subset B \text{ for some } B\in\cB \text{ and } \mu(C)\ge\frac12\mu(B)\big\}.
\]
Then $\cB'=\cup_{x\in X}\cB'(x)$ is clearly a Busemann--Feller basis. By \eqref{lim},
\[
\M^\gamma_\cB f(x)\le \M^\gamma_{\cB'}f(x)
\]
for every $0<\gamma<1$, $f\in L^0(X)$ and $x\in X$. 
If $C\in\cB'$, there exists $B\in\cB$ such that $C\subset B$ and $\mu(B)\le 2 \mu(C)$. By Lemma \ref{median lemma}(3), $m^\gamma_{|f|}(C)\le m^{\gamma/2}_{|f|}(B)$ for every $0<\gamma<1$ and $f\in L^0(X)$. It follows that
\[
\M^\gamma_{\cB'} f(x)\le \M^{\gamma/2}_{\cB}f(x)
\]
for every $0<\gamma<1$, $f\in L^0(X)$ and $x\in X$.
\end{proof}

\begin{theorem}
Assume that $X$ is separable and let $\cB$  be a Busemann--Feller basis on $X$.  Let $0<\gamma<1$ and $0<p<\infty$. Assume that, for every $f\in L^p(X)$, $\M^{\gamma/4}_\cB f(x)<\infty$ for almost every $x\in X$.
Then, for every $\lambda>0$, 
\[
\mu(\{x\in X: \M^\gamma_\cB f_i(x)>\lambda\})\to 0
\] as $\|f_i\|_{L^p(X)}\to 0$.
\end{theorem}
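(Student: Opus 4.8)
The plan is to pass to a countable basis and then argue by contradiction. By Lemma~\ref{countable} there is a countable Busemann--Feller basis $\cB'=\{C_n\}_{n\in\n}$ with $\mu(C_n)<\infty$, $X=\bigcup_n C_n$, and $\M^\eta_\cB f\le\M^\eta_{\cB'}f\le\M^{\eta/2}_\cB f$ for every $0<\eta<1$ and $f\in L^0(X)$. Consequently $\{\M^\gamma_\cB f>\lambda\}\subseteq\{\M^\gamma_{\cB'}f>\lambda\}$, while $\M^\gamma_{\cB'}f\le\M^{\gamma/2}_\cB f\le\M^{\gamma/4}_\cB f<\infty$ almost everywhere by hypothesis and Lemma~\ref{median lemma}(1). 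Hence it suffices to prove: if $\cB'$ is a countable Busemann--Feller basis with $\M^\gamma_{\cB'}f<\infty$ a.e.\ for every $f\in L^p(X)$, then $\mu(\{\M^\gamma_{\cB'}f_i>\lambda\})\to0$ whenever $\|f_i\|_{L^p(X)}\to0$.

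Suppose this fails. Then there are $\lambda,\delta>0$ and $f_i\in L^p(X)$ with $\|f_i\|_{L^p(X)}\to0$ but $\mu(\{\M^\gamma_{\cB'}f_i>\lambda\})\ge\delta$ for all $i$. Passing to a subsequence, I may assume the norms go to $0$ so fast that, setting $c_i:=\|f_i\|_{L^p(X)}^{-1/2}$, one has $c_i\to\infty$ and $\sum_i(c_i\|f_i\|_{L^p(X)})^{\min\{1,p\}}<\infty$. Put $g:=\sum_i c_i|f_i|$; the summability and the (quasi-)triangle inequality in $L^p$ give $g\in L^p(X)$. Since $0\le c_i|f_i|\le g$, monotonicity and positive homogeneity of $\gamma$-medians (Lemma~\ref{median lemma}(2),(5)) give $\M^\gamma_{\cB'}g\ge c_i\,\M^\gamma_{\cB'}f_i$ pointwise, and therefore
\[
\mu\bigl(\{\M^\gamma_{\cB'}g>c_i\lambda\}\bigr)\ \ge\ \mu\bigl(\{\M^\gamma_{\cB'}f_i>\lambda\}\bigr)\ \ge\ \delta\qquad\text{for every }i.
\]

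Because $c_i\lambda\to\infty$, the sets $\{\M^\gamma_{\cB'}g>M\}$ decrease to $\{\M^\gamma_{\cB'}g=\infty\}$ as $M\to\infty$ and each has measure at least $\delta$; if one of them has finite measure, then $\mu(\{\M^\gamma_{\cB'}g=\infty\})=\lim_{M\to\infty}\mu(\{\M^\gamma_{\cB'}g>M\})\ge\delta>0$, contradicting $\M^\gamma_{\cB'}g<\infty$ a.e., which settles the case $\mu(X)<\infty$. For general $X$ I would first replace each $\{\M^\gamma_{\cB'}f_i>\lambda\}=\bigcup_{n\in S_i}C_n$ (with $m^\gamma_{|f_i|}(C_n)>\lambda$, hence $\mu(C_n)\le\gamma^{-1}\lambda^{-p}\|f_i\|_{L^p(X)}^p$) by a finite subunion $G_i$ with $\mu(G_i)\ge\delta/2$, confine the $G_i$ after a further subsequence to one set $W$ of finite measure, and run the preceding argument inside $W$. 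Equivalently, one can use a Baire category argument: the truncated operators $f\mapsto T_Nf:=\max\{m^\gamma_{|f|}(C_n):n\le N,\ x\in C_n\}$ are continuous from $L^p(X)$ to measurable functions (Lemma~\ref{med lemma 2}), the closed sets $\{f\in L^p(X):\sup_N\mu(\{T_Nf>M\})\le\delta\}$, $M\in\n$, cover $L^p(X)$, and one of them has nonempty interior.

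The step I expect to be the crux is exactly this passage from the qualitative fact ``$\M^\gamma_{\cB'}g<\infty$ a.e.'' for the single function $g$ to a quantitative contradiction, i.e.\ gaining measure control of $\{\M^\gamma_{\cB'}g>M\}$ (equivalently, verifying the covering of $L^p(X)$ in the Baire approach) when $\mu(X)$ may be infinite. Everything else — the reduction to a countable basis, the construction of $g$, and the median manipulations — is routine given Lemmas~\ref{median lemma}, \ref{med lemma 1}, \ref{med lemma 2}, \ref{med lemma 3} and \ref{countable}.
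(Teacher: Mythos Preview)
Your direct construction of $g=\sum_i c_i|f_i|$ is a genuinely different route from the paper's, which runs a Baire category argument from the outset. Your argument is clean and complete when $\mu(X)<\infty$: monotonicity and homogeneity of the median give $\{\M^\gamma_{\cB'}f_i>\lambda\}\subset\{\M^\gamma_{\cB'}g>c_i\lambda\}$, the latter sets decrease to a null set, and continuity from above yields the contradiction. This is more elementary than the paper's proof in that regime.

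The gap you flag in the infinite-measure case is real, and neither of your sketched fixes closes it. In~(a) there is no mechanism to trap a subsequence of the finite unions $G_i$ inside a fixed finite-measure $W$: the $G_i$ can drift through $X$. In~(b) your Baire sets $\{f:\sup_N\mu(\{T_Nf>M\})\le\delta\}$ need not cover $L^p(X)$, since the hypothesis only gives $\M^\gamma_{\cB'}f<\infty$ a.e., not $\mu(\{\M^\gamma_{\cB'}f>M\})\to0$. The paper handles this differently in two respects. First, it works in the quasi-normed space $(L^0(X),\|\cdot\|_{L^0})$ with $\|h\|_{L^0}=\inf_{\lambda>0}\{\lambda+\mu(\{|h|>\lambda\})\}$ and defines the Baire sets by \emph{scaling}, $F_m=\{f:\sup_k\|m^{-1}\M^{\gamma/2}_kf\|_{L^0}\le\eps\}$; the covering $L^p=\bigcup_m F_m$ then follows from the continuity-of-scalar-multiplication axiom $\|a_m h\|_{L^0}\to0$ (applied with $h=\M^{\gamma/2}_{\cB'}f$), not from any level-set estimate. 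Second, the truncated operators are built with $\gamma/2$ rather than $\gamma$, so that after obtaining an interior point $f_0\in F_{m_0}$ one can use the median subadditivity $m^\gamma_{|f|}(B)\le m^{\gamma/2}_{|f_0+f|}(B)+m^{\gamma/2}_{|f_0|}(B)$ to pass from $\M^{\gamma/2}_k$ back to $\M^\gamma_k$. Your sketch~(b) omits both devices; without them the argument does not go through.
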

\begin{proof} 
The set $L^0(X)$ of measurable almost everywhere finite functions equipped with a functional
\[
\|f\|_{L^0(X)}=\inf_{\lambda>0}\big\{\lambda+\mu(\{x\in X:|u(x)|>\lambda\})\big\}
\]
is a quasi-normed space in the sense of Yosida \cite[p.\,30]{Y}. In particular,
\begin{equation}
\|f+g\|_{L^0(X)}\le\|g\|_{L^0(X)}+\|g\|_{L^0(X)}
\end{equation}
for every $f,g\in L^0(X)$ and
\begin{equation}\label{L^0 prop}
\|a_m f\|_{L^0(X)}\to 0 \ \text{ whenever } \ f\in L^0(X) \text{ and } a_m\to 0.
\end{equation}
It is easy to see that \[\lim_{i\to\infty}\mu(\{x\in X:|f_i(x)-f(x)|>\lambda\})=0,\] for every $\lambda>0$, if and only if $\lim_{i\to \infty}\|f_i-f\|_{L^0(X)}=0$. 

By Lemma \ref{countable}, there exists a countable Busemann--Feller basis $\cB'=\{B_j:j=1,2,\dots\}$ such that
\begin{equation}\label{B B' 1}
\M^\gamma_\cB f(x)\le \M^\gamma_{\cB'}f(x)
\end{equation}
and
\begin{equation}\label{B B' 2}
\M^{\gamma/2}_{\cB'}f(x)\le \M^{\gamma/4}_\cB f(x)
\end{equation}
for every $f\in L^0(X)$ and $x\in X$.
For $k=1,2,\dots$, denote 
\[
\M^{\gamma/2}_k f(x)=\max\big\{m^{\gamma/2}_{|f|}(B_j): B_j\ni x,\, 1\le j\le k\big\}.
\]
By Lemma \ref{med lemma 2}, operators $\M^{\gamma/2}_k$ are continuous from $L^p(X)$, $p\ge 0$, to $L^0(X)$. Let $\eps>0$. For $m\in\n$, denote 
\[
F_m=\{f\in L^p(X): \sup_k\|m^{-1}M^{\gamma/2}_kf\|_{L^0(X)}\le \eps\}.
\] 
By continuity, sets $F_m$ are closed. Since, by assumption and \eqref{B B' 2}, $\M^{\gamma/2}_{\cB'}f\in L^0(X)$, \eqref{L^0 prop} implies that
\[ 
 \sup_k\|m^{-1}M^{\gamma/2}_kf\|_{L^0(X)}
 \le \|m^{-1}M^{\gamma/2}_{\cB'}f\|_{L^0(X)}\to 0
\]
as $m\to \infty$. It follows that $L^p(X)=\cup_{m=1}^\infty F_m$. By the Baire--Hausdorff theorem (\cite[p.\,11]{Y}), one of the sets $F_m$ must have non-empty interior. Thus, there is $m_0\in\n$, $f_0\in F_{m_0}$ and $\delta>0$ such that $\|f\|_{L^p(X)}<\delta$ implies that $f_0+f\in F_{m_0}$. Hence, if $\|f\|_{L^p(X)}<\delta$, we have 
\[
\|\M^\gamma_k m_0^{-1}f\|_{L^0(X)}
 \le \|m_0^{-1}\M^{\gamma/2}_k(f_0+f)\|_{L^0(X)}+\|m_0^{-1}\M^{\gamma/2}_k f_0\|_{L^0(X)}\le 2\eps
\]
for every $k\in\n$. It follows that $\sup_k\|\M^\gamma_kf_i\|_{L^0(X)}\to 0$ as $\|f_i\|_{L^p(X)}\to 0$. This and \eqref{B B' 1} imply that,
for every $\lambda>0$,
\[
\begin{split}
\mu(\{x\in X:\M^\gamma_{\cB} f_i(x)>\lambda\})&\le
\mu(\{x\in X:\M^\gamma_{\cB'} f_i(x)>\lambda\})\\
&=\sup_k\mu(\{x\in X:\M^\gamma_kf_i(x)>\lambda\})\to 0
\end{split}
\]
as $\|f_i\|_{L^p(X)}\to 0$.
\end{proof}

To prove implication $(5)\implies (1)$ of Theorem \ref{thm1 in rn},  we employ yet another characterization of a density basis, see \cite[Theorem 1.1]{Guz1}.

\begin{theorem}\label{Guz1 thm 1.1}
Let $\cB$ be a Busemann--Feller basis. Then the following
are equivalent.
\begin{itemize}
\item[(1)] $\cB$ is a density basis. 
\item[(2)] For every $0<\gamma<1$, for every nonincreasing sequence $(r_k)$ such that $r_k\to 0$ as $k\to\infty$, and for every
nonincreasing sequence $(A_k)$ of bounded measurable sets such that $|A_k|\to 0$ as $k\to\infty$, we have
\begin{equation}\label{Guz thm 1.1 b}
|\{x\in\rn:\M_{\cB,r_k}\chi_{A_k}(x)>\gamma\}|\to 0
\end{equation}
as $k\to\infty$.
\end{itemize}
\end{theorem}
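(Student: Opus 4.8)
The plan is to establish $(1)\Leftrightarrow(2)$ by proving the contrapositive of each implication, using throughout the following reformulation. Write $\overline D(\cB,A,x)=\limsup_{\cB\,\ni B\to x}|A\cap B|/|B|$ for the upper density; since $r\mapsto\M_{\cB,r}\chi_A(x)$ is nonincreasing with $\lim_{r\to0}\M_{\cB,r}\chi_A(x)=\overline D(\cB,A,x)$, and since every $B\in\cB$ is open (so each $\M_{\cB,r}\chi_A$ is lower semicontinuous and the superlevel sets in (2) are open, hence measurable), a short argument via complementation ($|A^c\cap B|/|B|=1-|A\cap B|/|B|$) together with the localization that a set $B\in\cB(x)$ with $x\in B(0,R-1)$ and $\diam B<1$ satisfies $B\subset B(0,R)$ shows that $\cB$ is a density basis if and only if, for every \emph{bounded} measurable $A\subset\rn$ and almost every $x\notin A$, one has $\overline D(\cB,A,x)=0$.

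For $(2)\Rightarrow(1)$, suppose $\cB$ is not a density basis. By the reformulation there are a bounded measurable $A$ and a positive-measure set $S$ disjoint from $A$ with $\overline D(\cB,A,x)>0$ on $S$; decomposing $S$ according to the value of $\overline D$ and then passing to a compact subset by inner regularity, we obtain $\gamma\in(0,1)$ and a compact $K$ with $|K|>0$, $K\cap A=\emptyset$, and $\overline D(\cB,A,x)>\gamma$ for all $x\in K$. Fix any $r_k\downarrow 0$ and set $A_k=A\cap\{y:\dist(y,K)<r_k\}$: these are bounded and measurable, they decrease, and $\bigcap_k\{y:\dist(y,K)<r_k\}=K$, so $A_k\downarrow A\cap K=\emptyset$ and $|A_k|\to0$. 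On the other hand, if $x\in K$ and $B\in\cB(x)$ with $\diam B<r_k$, then each $y\in B$ has $\dist(y,K)\le|y-x|\le\diam B<r_k$, so $B\subset\{y:\dist(y,K)<r_k\}$ and thus $A\cap B=A_k\cap B$. Since $\M_{\cB,r_k}\chi_A(x)\ge\overline D(\cB,A,x)>\gamma$, some such $B$ has $|A\cap B|/|B|>\gamma$, whence $\M_{\cB,r_k}\chi_{A_k}(x)>\gamma$. Therefore $K\subset\{x:\M_{\cB,r_k}\chi_{A_k}(x)>\gamma\}$ for every $k$, so the measures of these sets stay $\ge|K|>0$ and do not tend to $0$, contradicting (2).

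For $(1)\Rightarrow(2)$, suppose (2) fails: there are $\gamma$, a nonincreasing null sequence $(r_k)$, and a nonincreasing sequence $(A_k)$ of bounded measurable sets with $|A_k|\to0$ such that $|G_k|\not\to0$, where $G_k=\{x:\M_{\cB,r_k}\chi_{A_k}(x)>\gamma\}$; passing to a subsequence (still nonincreasing and null) we may assume $|G_k|\ge c>0$ for all $k$. Each $G_k$ is open and, since $\gamma>0$ forces $G_k\subset\{y:\dist(y,A_k)<r_k\}$, bounded. Put $E=\limsup_k G_k$; since the nested sets $\bigcup_{k\ge N}G_k$ have finite measure $\ge c$, continuity of measure gives $|E|\ge c$. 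If $x\in E$ and $j\in\n$, then for infinitely many $k\ge j$ there is $B\in\cB(x)$ with $\diam B<r_k$ and $|A_k\cap B|/|B|>\gamma$; as $A_k\subset A_j$ this forces $|A_j\cap B|/|B|>\gamma$ with $\diam B<r_k\to0$, so $\overline D(\cB,A_j,x)\ge\gamma$. Hence $\overline D(\cB,A_j,\cdot)\ge\gamma$ on $E$ for every $j$. Since $\bigcap_j A_j$ is null, $\bigcup_j(E\setminus A_j)=E\setminus\bigcap_j A_j$ has measure $\ge c$, so $|E\setminus A_{j_0}|>0$ for some $j_0$; on $E\setminus A_{j_0}$ we have $\chi_{A_{j_0}}=0$ while $\overline D(\cB,A_{j_0},\cdot)\ge\gamma>0$, so $\cB$ fails to differentiate the bounded set $A_{j_0}$ and is not a density basis, contradicting (1).

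The routine parts are the reformulation in the first paragraph and the semicontinuity and measurability bookkeeping. The one genuinely delicate step, and the crux of the whole argument, is the truncation $A_k=A\cap\{\dist(\cdot,K)<r_k\}$ in $(2)\Rightarrow(1)$: it must be small enough that $|A_k|\to0$ (forced because these neighbourhoods shrink to the compact set $K$, which misses $A$) yet large enough that $\cB$ at scale $r_k$ cannot distinguish $A_k$ from $A$ on $K$, so that the uncontrolled overlap of $\cB$ on $K$ is transferred to a sequence of sets of vanishing measure.
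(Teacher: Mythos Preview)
The paper does not prove this theorem; it is quoted from de Guzm\'an \cite[Theorem 1.1]{Guz1} and used as a black box. Your argument is essentially the classical one found there, and it is correct. One harmless slip: the map $r\mapsto \M_{\cB,r}\chi_A(x)$ is \emph{nondecreasing} in $r$ (larger $r$ allows more sets in the supremum), not nonincreasing; what you actually use is $\M_{\cB,r_k}\chi_A(x)\ge \inf_{r>0}\M_{\cB,r}\chi_A(x)=\overline D(\cB,A,x)$, which is correct regardless. The finiteness of $|\bigcup_{k\ge N}G_k|$ in the $(1)\Rightarrow(2)$ direction is justified as you indicate, since every $G_k$ lies in the fixed bounded set $\{y:\dist(y,A_1)<r_1\}$, so continuity from above applies and yields $|E|\ge c$.
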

\emph{Proof of implication $(5)\implies (1)$ of Theorem \ref{thm1 in rn}}.
By Theorem \ref{Guz1 thm 1.1}, it suffices to show that
\[
|\{x\in\rn:\M_{\cB}\chi_{A_k}(x)>\gamma\}|\to 0
\]
as $k\to\infty$, whenever $0<\gamma<1$ and $(A_k)$ is a sequence of bounded measurable sets such that $|A_k|\to 0$ as $k\to\infty$.
Let $(A_k)$ be such a sequence. Then, $\|\chi_{A_k}\|_{L^p(X)}\to 0$ as $k\to\infty$. By applying Lemma \ref{Mgamma M lemma} with $f=\chi_{A_k}$ and $\lambda=1/2$, we have
\[
|\{x\in\rn:\M_{\cB}\chi_{A_k}(x)>\gamma\}|= |\{x\in\rn:\M^\gamma_\cB \chi_{A_k}(x)>1/2\}|,
\]
which, by assumption, tends to zero as $k\to\infty$. \hfill $\Box$

\section{Haj\l asz  spaces, Haj\l asz--Besov and Haj\l asz--Triebel--Lizorkin spaces}
Among several definitions for Besov and Triebel--Lizorkin spaces in metric measure spaces, we use the one introduced in \cite{KYZ}. This definition is motivated by the Haj\l asz--Sobolev spaces $M^{s,p}(X)$, defined for $s=1$, $p\ge1$ in \cite{H} and for fractional scales in \cite{Y}. 
We begin with the definition of a gradient on metric measure spaces.

\begin{definition}
Let $0<s<\infty$. 
A measurable function $g:X\to[0,\infty]$ is an  {\em $s$-gradient} of a function $u\in L^0(X)$ if there exists a set
$E\subset X$ with $\mu(E)=0$ such that for all $x,y\in X\setminus E$,
\begin{equation}\label{eq: gradient}
|u(x)-u(y)|\le d(x,y)^s(g(x)+g(y)).
\end{equation}
The collection of all $s$-gradients of $u$ is denoted by $\cD^s(u)$.
\end{definition}

Then we define Sobolev spaces on metric measure spaces.

\begin{definition}
Let $0<p\le\infty$. The {\em homogeneous Haj\l asz space} $\dot{M}^{s,p}(X)$ consists of measurable functions $u$ for which
\[
\|u\|_{\dot M^{s,p}(X)}=\inf_{g\in\mathcal{D}^s(u)}\|g\|_{L^p(X)}
\]
is finite.
The {\em Haj\l asz space} $M^{s,p}(X)$ is $\dot M^{s,p}(X)\cap L^p(X)$  equipped with the norm
\[
\|u\|_{M^{s,p}(X)}=\|u\|_{L^p(X)}+\|u\|_{\dot M^{s,p}(X)}.
\]
\end{definition}

Recall that for $p>1$, $M^{1,p}(\rn)=W^{1,p}(\rn)$, see \cite{H}, whereas for $n/(n+1)<p\le 1$, $M^{1,p}(\rn)$ coincides with the Hardy--Sobolev space $H^{1,p}(\rn)$ by \cite[Theorem 1]{KS}.


For the definition of Haj\l asz--Triebel--Lizorkin and Haj\l asz--Besov spaces, we need a concept of a fractional gradient, which consists of a sequence of gradient functions.

\begin{definition}
Let $0<s<\infty$.
A sequence of nonnegative measurable functions $(g_k)_{k\in\z}$ is a  {\em fractional $s$-gradient} of a function
$u\in L^0(X)$, if there exists a set $E\subset X$ with $\mu(E)=0$ such that
\begin{equation}\label{frac grad}
|u(x)-u(y)|\le d(x,y)^s(g_k(x)+g_k(y))
\end{equation}
for all $k\in\z$ and all $x,y\in X\setminus E$ satisfying $2^{-k-1}\le d(x,y)<2^{-k}$.
The collection of all fractional $s$-gradients of $u$ is denoted by $\D^s(u)$.
\end{definition}

For $0<p,q\le\infty$ and a sequence $(f_k)_{k\in\z}$ of measurable
functions, we define
\[
\big\|(f_k)_{k\in\z}\big\|_{L^p(X,\,l^q)}
=\big\|\|(f_k)_{k\in\z}\|_{l^q}\big\|_{L^p(X)}
\]
and
\[
\big\|(f_k)_{k\in\z}\big\|_{l^q(L^p(X))}
=\big\|\big(\|f_k\|_{L^p(X)}\big)_{k\in\z}\big\|_{l^q},
\]
where
\[
\big\|(f_k)_{k\in\z}\big\|_{l^{q}}
=
\begin{cases}
\big(\sum_{k\in\z}|f_{k}|^{q}\big)^{1/q},&\quad\text{if }0<q<\infty, \\
\;\sup_{k\in\z}|f_{k}|,&\quad\text{if }q=\infty.
\end{cases}
\]

Next we recall the definition of Haj\l asz--Triebel--Lizorkin and Haj\l asz--Besov spaces on metric measure spaces.

\begin{definition}
Let $0<s<\infty$ and $0<p,q\le\infty$.
\begin{itemize}
\item[(1)] The  {\em homogeneous Haj\l asz--Triebel--Lizorkin space} $\dot M_{p,q}^s(X)$ consists of functions
$u\in L^0(X)$, for which the (semi)norm
\[
\|u\|_{\dot M_{p,q}^s(X)}
=\inf_{(g_k)\in\D^s(u)}\|(g_k)\|_{L^p(X,\,l^q)}
\]
is finite. 
The  {\em Haj\l asz--Triebel--Lizorkin space} $M_{p,q}^s(X)$ is $\dot M_{p,q}^s(X)\cap L^p(X)$
equipped with the norm
\[
\|u\|_{M_{p,q}^s(X)}=\|u\|_{L^p(X)}+\|u\|_{\dot M_{p,q}^s(X)}.
\]
Notice that $M^s_{p,\infty}(X)=M^{s,p}(X)$, see \cite[Prop.\ 2.1]{KYZ}.
\item[(2)] Similarly, the  {\em homogeneous Haj\l asz--Besov space} $\dot N_{p,q}^s(X)$ consists of functions
$u\in L^0(X)$, for which
\[
\|u\|_{\dot N_{p,q}^s(X)}=\inf_{(g_k)\in\D^s(u)}\|(g_k)\|_{l^q(L^p(X))}
\]
is finite, and the {\em Haj\l asz--Besov space} $N_{p,q}^s(X)$ is $\dot N_{p,q}^s(X)\cap L^p(X)$
equipped with the norm
\[
\|u\|_{N_{p,q}^s(X)}=\|u\|_{L^p(X)}+\|u\|_{\dot N_{p,q}^s(X)}.
\]
\end{itemize}
\end{definition}
For $0<s<1$, $0<p,q\le \infty$, the spaces $N^s_{p,q}(\rn)$ and $M^s_{p,q}(\rn)$ coincide with the classical Besov and Triebel--Lizorkin spaces defined via differences,  see \cite{GKZ}.

When $0<p<1$ or $0<q<1$, the (semi)norms defined above are actually quasi-(semi)norms, but for simplicity we call them, as well as other quasi-seminorms in this paper, just norms.

\begin{definition}
Let $0<s<\infty$, $0<p,q\le \infty$ and $\cF\in\{N^s_{p,q}(X), M^s_{p,q}(X)\}$.
The \emph{$\cF$-capacity} of a set $E\subset X$ is
\[
C_{\cF}(E)=\inf\big\{\|u\|_{\cF}^p: u\in\mathcal A_{\cF}(E)\big\},
\]
where 
\[
\mathcal A_{\cF}(E)=\big\{u\in \cF: u\ge 1 \text{ on a neighbourhood of } E\big\}
\] 
is a set of admissible functions for the capacity.
We say that a property holds \emph{$\cF$-quasieverywhere} if it holds outside a set of $\cF$-capacity zero.
\end{definition}

\begin{remark} 
It is easy to see that
\[
C_{\cF}(E)=\inf\big\{\|u\|_{\cF}^p: u\in\mathcal A_{\cF}'(E)\big\},
\]
where
$\mathcal A_{\cF}'(E)=\{u\in \cA_\cF(E): 0\le u\le 1\}$. 
\end{remark}

\begin{remark}\label{cap remark 2} 
The $\cF$-capacity is an outer capacity, which means that
\[
C_{\cF}(E)=\inf\big\{C_{\cF}(U): U\supset E,\ U \text{ is open}\big\}.
\]
\end{remark}


A function $u:X\to[-\infty,\infty]$ is \emph{$\cF$-quasicontinuous}
if for every $\eps>0$, there exists a set $E\subset X$ such that $C_\cF(E)<\eps$ and the restriction of $u$ to $X\setminus E$ is continuous. 
The following lemma follows from \cite[Theorem 1.2]{HeKoTu}.

\begin{lemma}\label{}
For every $u\in\cF$, there exists an $\cF$-quasicontinuous
$u^*$ such that $u(x)=u^*(x)$ for almost every $x\in X$.
\end{lemma}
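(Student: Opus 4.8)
The plan is to derive the statement from the approximation theorem [HeKoTu, Theorem~1.2], which already contains this quasicontinuity result for the spaces $N^s_{p,q}(X)$ and $M^s_{p,q}(X)$; I would record the argument for the reader's convenience. The two facts I extract from [HeKoTu] are: (i) continuous functions belonging to $\cF$ are dense in $\cF$, and (ii) $C_\cF$ is an outer, countably quasi-subadditive set function for which the capacitary Chebyshev inequality
\[
C_\cF\big(\{x\in X:|w(x)|>\lambda\}\big)\le\lambda^{-p}\|w\|_\cF^p,\qquad w\in\cF,\ \lambda>0,
\]
holds for continuous $w$. Inequality (ii) is immediate once one observes that $w\in\cF$ implies $|w|\in\cF$ with the same (fractional) $s$-gradient, since $\big||w|(x)-|w|(y)\big|\le|w(x)-w(y)|$; when $w$ is continuous the set $\{|w|>\lambda\}$ is open, so $\lambda^{-1}|w|$ lies in $\cA_\cF(\{|w|>\lambda\})$.

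Given $u\in\cF$, I would use (i) to choose continuous $u_j\in\cF$ with $\|u-u_j\|_\cF\le 4^{-j}$, so that $\|u_{j+1}-u_j\|_\cF\le 2\cdot 4^{-j}$ for this sequence. Put $E_j=\{x\in X:|u_{j+1}(x)-u_j(x)|>2^{-j}\}$; this set is open, and by the Chebyshev inequality $C_\cF(E_j)\le 2^{jp}\|u_{j+1}-u_j\|_\cF^p\le C\,2^{-jp}$, whence $\sum_j C_\cF(E_j)<\infty$. By quasi-subadditivity the sets $F_k=\bigcup_{j\ge k}E_j$ satisfy $C_\cF(F_k)\to 0$ as $k\to\infty$, and $N=\bigcap_k F_k$ has $C_\cF(N)=0$, hence $\mu(N)=0$. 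For $x\notin F_k$ the telescoping series $u_k(x)+\sum_{j\ge k}\big(u_{j+1}(x)-u_j(x)\big)$ converges, in fact uniformly on $X\setminus F_k$; its sum is independent of $k$, so it defines a function $u^*$ on $X\setminus N$, which I extend by $0$ on $N$. Being a uniform limit of continuous functions on each $X\setminus F_k$, $u^*$ is continuous on $X\setminus F_k$, and since $C_\cF(F_k)\to 0$ this shows $u^*$ is $\cF$-quasicontinuous.

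It remains to see that $u^*=u$ $\mu$-almost everywhere. Since $u_j\to u$ in $\cF$ we have $u_j\to u$ in $L^p(X)$, so a subsequence converges to $u$ $\mu$-a.e.; but $u_j\to u^*$ pointwise on $X\setminus N$ and $\mu(N)=0$, so $u^*=u$ a.e. The point that requires care — and the reason it is cleanest to cite [HeKoTu] rather than reprove everything from scratch — is item (i): density of continuous functions in $\cF$ over the whole parameter range $0<p,q\le\infty$, where the (quasi)norms and the capacity satisfy only $\beta$-triangle inequalities with $\beta=\min\{1,p,q\}$, so the bookkeeping in the telescoping estimate and in the quasi-subadditivity of $C_\cF$ must be carried out with these exponents. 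A minor technical point is that admissibility for $C_\cF$ requires functions that are $\ge 1$ on an \emph{open} neighbourhood of the set, which is exactly why the continuity of the $u_j$ (making each $E_j$ open) is used above.
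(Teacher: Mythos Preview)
Your proposal is correct and matches the paper's approach exactly: the paper gives no proof at all but simply states that the lemma follows from \cite[Theorem 1.2]{HeKoTu}, which is precisely your starting point. Your additional sketch of the standard quasicontinuous-representative construction (density of continuous functions in $\cF$, capacitary Chebyshev for continuous $w$, telescoping with geometric decay, uniform convergence off sets of small capacity) goes beyond what the paper records and is sound modulo the quasi-norm bookkeeping you already flag.
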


The $\cF$-quasicontinuous representative is unique in the sense that if two $\cF$-quasicon- tinuous functions coincide almost everywhere, then they actually coincide $\cF$-quasi- everywhere, see \cite{K}.
The following lemma gives a useful characterization of the capacity in terms of quasicontinuous functions.
The proof of the lemma is essentially same as the proof of \cite[Theorem 3.4]{KKM}.
For $E\subset X$, denote
\[
\mathcal{QA}_\cF(E)=\{u\in\cF: u \text{ is $\cF$-quasicontinuous and } u\ge 1 \text{ $\cF$-quasieverywhere in } E\}
\]
and
\[
\widetilde C_\cF(E)=\inf_{u\in \mathcal{QA}_\cF(E)}\|u\|_\cF^p.
\]
\begin{lemma}\label{cap lemma}
Let $0<s\le 1$, $0<p<\infty$, $0<q\le \infty$ and $\cF\in\{M^s_{p,q}(X),N^s_{p,q}(X)\}$. Then
there exists a constant $C$ such that
\[ 
\widetilde C_\cF(E)\le C_\cF(E)\le C\,\widetilde C_\cF(E)
\]
for every $E\subset X$.
\end{lemma}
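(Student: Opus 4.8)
The inequality $\widetilde C_\cF(E)\le C_\cF(E)$ is immediate: every $u\in\cA_\cF(E)$ is (after replacing it by its quasicontinuous representative $u^*$, which coincides with $u$ a.e.\ and hence has the same norm) a member of $\mathcal{QA}_\cF(E)$, because $u\ge 1$ on a neighbourhood of $E$ and $u^*=u$ a.e.\ forces $u^*\ge 1$ $\cF$-quasieverywhere on that neighbourhood, in particular on $E$. So the content is the reverse estimate $C_\cF(E)\le C\,\widetilde C_\cF(E)$, and the plan is to follow the proof of \cite[Theorem 3.4]{KKM}: given a quasicontinuous admissible $u\in\mathcal{QA}_\cF(E)$, produce from it a genuine admissible function for $C_\cF(E)$ at a controlled cost in norm.

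\textbf{Key steps.} First I would reduce to $0\le u\le 1$ using the remark that $C_\cF(E)=\inf\{\|u\|_\cF^p: u\in\cA_\cF'(E)\}$ together with the analogous truncation for $\widetilde C_\cF$ (truncating a quasicontinuous function to $[0,1]$ and to $\max\{u,0\}$ preserves quasicontinuity and does not increase the relevant gradient norms, since $(\min\{g_k, h_k\})$-type estimates hold for $\min$ and $\max$ of functions and truncation is $1$-Lipschitz). Next, fix $\eps>0$ and $u\in\mathcal{QA}_\cF(E)$ with $0\le u\le 1$ and $\|u\|_\cF^p\le \widetilde C_\cF(E)+\eps$. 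By quasicontinuity, choose an open set $G$ with $C_\cF(G)$ as small as we like such that $u|_{X\setminus G}$ is continuous; also, since $u\ge 1$ $\cF$-quasieverywhere on $E$, the exceptional set $N=\{x\in E: u(x)<1\}$ has $C_\cF(N)=0$, so (using that $C_\cF$ is an outer capacity, Remark~\ref{cap remark 2}) we may enlarge $G$ to an open set, still of small capacity, containing $N$. Pick $w\in\cA_\cF(G)$ with $0\le w\le 1$ and $\|w\|_\cF^p$ small; then on the open set $V=\{x\in X\setminus G: u(x)>1-\delta\}\cup G$ (open in $X$ because $u$ restricted to the closed set $X\setminus G$ is continuous, so $\{u>1-\delta\}\cap(X\setminus G)$ is relatively open, and one checks $V$ is open in $X$ after intersecting with a slightly shrunk neighbourhood) the function $v=\min\{1,\ \tfrac{1}{1-\delta}u + w\}$ satisfies $v\ge 1$ on $V\supset E$. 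Hence $v\in\cA_\cF(E)$, and $\|v\|_\cF\le \tfrac1{1-\delta}\|u\|_\cF + \|w\|_\cF$ by the (quasi-)sublinearity of the $\cF$-norm together with Lemma~\ref{max lemma}-type stability of fractional gradients under $\min$ and sums. Letting $\delta\to0$, $\eps\to0$, and the capacity of $G$ to $0$ gives $C_\cF(E)\le C\,\widetilde C_\cF(E)$, with $C$ coming only from the quasi-triangle inequality constant of the $\cF$-norm (which is $1$ when $p,q\ge1$ and a fixed constant depending on $p,q$ otherwise).

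\textbf{Main obstacle.} The delicate point is the construction of the \emph{open} set $V$ on which the competitor is genuinely $\ge 1$: quasicontinuity only gives continuity of $u$ on the complement of a small open set, so $\{u>1-\delta\}$ itself need not be open, and one must carefully combine it with the bad open set $G$ (which carries both the discontinuity of $u$ and the exceptional set where $u<1$ on $E$) to obtain an open neighbourhood of $E$. Keeping track that $w$ can be chosen with $0\le w\le1$ and with $\|w\|_\cF$ and $C_\cF(G)$ simultaneously small — and that adding $w$ repairs the defect exactly on $G$ — is the crux; everything else is the routine verification that truncations, minima, and sums of functions in $\cF$ stay in $\cF$ with the expected bounds on their fractional $s$-gradients, which is exactly the content of the (omitted) Lemmas on $\D^s$ stability. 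One should also note, as in \cite{KKM}, that the argument uses $0<s\le1$ only through these gradient-stability lemmas and the quasicontinuity result Lemma~\ref{} (equivalently \cite[Theorem 1.2]{HeKoTu}), both of which are available in the stated range.
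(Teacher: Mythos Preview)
Your proposal is correct and follows essentially the same route as the paper's proof, which is itself an adaptation of \cite[Theorem 3.4]{KKM}: pass to a quasicontinuous representative for the easy inequality, and for the reverse, absorb both the discontinuity set of $u$ and the capacity-zero exceptional set $\{u<1\}\cap E$ into a single open $G$ of small capacity, then set the competitor to $\tfrac{1}{1-\delta}u+w$ with $w\in\cA_\cF(G)$. One small remark: your parenthetical about $V$ being open ``after intersecting with a slightly shrunk neighbourhood'' is unnecessary---since $u$ is continuous on the closed set $X\setminus G$, the relatively open set $\{u>1-\delta\}\cap(X\setminus G)$ equals $U\cap(X\setminus G)$ for some open $U\subset X$, whence $V=\big(U\cap(X\setminus G)\big)\cup G=U\cup G$ is already open; this is exactly how the paper phrases it.
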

\begin{proof}
To prove the first inequality, let $u\in\cA_\cF(E)$ and let $u^*$ be a quasicontinuous representative of $u$. Since $u\ge 1$ in some open set $U$ containing $E$ and $u^*=u$ almost everywhere, it follows that $\min\{0,u^*-1\}= 0$ almost everywhere in $U$. Since $\min\{0,u^*-1\}$ is quasicontinuous, the
equality actually holds quasieverywhere in $U$. Hence $u^*\ge 1$ quasieverywhere in $U$, which implies that $u^*\in\mathcal{QA}_\cF(E)$.

For the second inequality, let $v\in\mathcal{QA}_\cF(E)$. By truncation, we may assume that $0\le u\le 1$.
 Fix $0 < \eps < 1$, and choose an open set $V$ with $C_\cF(V) < \eps$ so that
$v = 1$ on $E \setminus V $ and that $v$ is continuous in $X\setminus V$.
By continuity, there is an open set
$U \subset X$ such that 
\[
\{ x \in X : v(x) > 1-\eps \}\setminus V = U \setminus V.
\]
Clearly, $E \setminus V \subset U \setminus V$.
Choose $u \in \cA_\cF(V)$ such that $\|u\|_\cF < \eps$
and that $0 \le u \le 1$.
Define $w=v/(1-\eps)+u$. Then $w\ge 1$ on $(U\setminus V)\cup V=U\cup V$ which is an open neighbourhood of $E$. Hence $w\in \cA_\cF(E)$ and so
\[
C_\cF(E)^{1/p}\le \|w\|_\cF\le C\Big(\frac{1}{1-\eps}\|v\|_\cF+\|u\|_\cF\Big)
\le C\Big(\frac{1}{1-\eps}\|v\|_\cF+\eps\Big).
\]
Since $\eps>0$ and $v\in\mathcal{QA}_\cF(E)$ are arbitrary, the desired inequality $C_\cF(E)\le C\,\widetilde C_\cF(E)$ follows.
\end{proof}

The $\cF$-capacity is not generally subadditive but, for most purposes, the following result is sufficient, see \cite[ Lemma 6.4]{HeKoTu}.

\begin{lemma}\label{r-subadd}
Let $0<s<\infty$, $0<p<\infty$, $0<q\le \infty$ and 
let $\cF\in\{N^s_{p,q}(X), M^s_{p,q}(X)\}$.
Then there are constants $C\ge 1$ and $0<r\le 1$ such that
\begin{equation}\label{eq: r-subadd}
C_{\cF}\big(\bigcup_{i\in\n}E_i\big)^r\le C\sum_{i\in\n} C_{\cF}(E_i)^r
\end{equation}
for all sets $E_i\subset X$, $i\in\n$.
In fact, \eqref{eq: r-subadd} holds with $r=\min\{1,q/p\}$.
\end{lemma}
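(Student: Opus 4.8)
The plan is to build a single admissible function for $\bigcup_{i\in\n}E_i$ as a pointwise supremum of near-optimal admissible functions for the individual sets, and then to bound its norm using the (quasi-)triangle inequalities available in the scales $l^q(L^p(X))$ and $L^p(X,l^q)$. I may assume $\sum_i C_\cF(E_i)<\infty$, since otherwise there is nothing to prove. Fix $\eps>0$. Recalling that $C_\cF(E)=\inf\{\|u\|_\cF^p:u\in\cA_\cF'(E)\}$, choose for each $i$ a function $u_i\in\cA_\cF'(E_i)$, so $0\le u_i\le1$, with $\|u_i\|_\cF^p\le C_\cF(E_i)+\eps2^{-i}$, and then a nearly optimal fractional $s$-gradient $(g_{i,k})_{k\in\z}\in\D^s(u_i)$, i.e.\ with norm at most twice the infimal value. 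Each $u_i$ equals $1$ on some open set $U_i\supset E_i$, so $u:=\sup_i u_i$ satisfies $0\le u\le1$, hence $u\in L^0(X)$, and $u\equiv1$ on the open set $\bigcup_i U_i$, which contains $\bigcup_i E_i$. Discarding the countable union of the $\mu$-null exceptional sets attached to the sequences $(g_{i,k})_{k\in\z}$, $i\in\n$, the pointwise bound $|u(x)-u(y)|\le\sup_i|u_i(x)-u_i(y)|$ shows directly from the definition that $(\sup_i g_{i,k})_{k\in\z}\in\D^s(u)$. Thus $u\in\cA_\cF\big(\bigcup_i E_i\big)$ once we know $\|u\|_\cF<\infty$.

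The heart of the argument is the estimate $\|u\|_\cF^{pr}\le C\sum_i\|u_i\|_\cF^{pr}$ with $r=\min\{1,q/p\}$ (so $pr=\min\{p,q\}$) and $C=C(p,q)$ independent of the number of sets. Write $\|u\|_\cF=\|u\|_{L^p(X)}+\|u\|_{\dot\cF}$, where $\|\cdot\|_{\dot\cF}$ denotes the homogeneous seminorm of $\cF$. For the first term, $(\sup_i u_i)^p\le\sum_i u_i^p$ gives $\|u\|_{L^p(X)}^p\le\sum_i\|u_i\|_{L^p(X)}^p$, hence (since $r\le1$) $\|u\|_{L^p(X)}^{pr}\le\sum_i\|u_i\|_{L^p(X)}^{pr}$. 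For the homogeneous term I would use the gradient $\sup_i g_{i,k}$ together with the pointwise bounds $\big\|\sup_i g_{i,k}\big\|_{L^p(X)}^p\le\sum_i\|g_{i,k}\|_{L^p(X)}^p$ (for the $N$-scale) and $\sup_i g_{i,k}^q\le\sum_i g_{i,k}^q$ (for the $M$-scale). When $q\ge p$, so $r=1$, one then applies the triangle inequality in $l^{q/p}$, respectively in $L^{p/q}(X)$, to obtain $\|u\|_{\dot\cF}^p\le C\sum_i\|u_i\|_{\dot\cF}^p$. When $q<p$, so $r=q/p$, the same pointwise bounds combined with the elementary inequality $\big(\sum_i a_i\big)^\theta\le\sum_i a_i^\theta$ for $0<\theta\le1$ applied with $\theta=q/p$ give $\|u\|_{\dot\cF}^q\le C\sum_i\|u_i\|_{\dot\cF}^q$. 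Both regimes read $\|u\|_{\dot\cF}^{pr}\le C\sum_i\|u_i\|_{\dot\cF}^{pr}$; combining with the $L^p$-bound and the elementary inequalities $(a+b)^{pr}\le2^{pr}(a^{pr}+b^{pr})$ and $a^{pr}+b^{pr}\le2(a+b)^{pr}$ yields the claimed estimate, and in particular $\|u\|_\cF<\infty$.

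To finish, since $C_\cF\big(\bigcup_i E_i\big)\le\|u\|_\cF^p$, raising to the power $r$ and inserting $\|u_i\|_\cF^{pr}=(\|u_i\|_\cF^p)^r\le(C_\cF(E_i)+\eps2^{-i})^r\le C_\cF(E_i)^r+(\eps2^{-i})^r$ (valid since $r\le1$) gives
\[
C_\cF\Big(\bigcup_{i\in\n}E_i\Big)^r\le C\sum_{i\in\n}C_\cF(E_i)^r+C\eps^r\sum_{i\in\n}2^{-ir},
\]
and letting $\eps\to0$ (the geometric series converges since $r>0$) proves \eqref{eq: r-subadd} with $r=\min\{1,q/p\}$.

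I expect the main obstacle to be the bookkeeping in the middle step: keeping the exponents consistent while juggling the (quasi-)triangle inequalities for $l^q$ and $L^p$ in the two regimes $q\ge p$ and $q<p$, handling the two function-space scales $M^s_{p,q}(X)$ and $N^s_{p,q}(X)$ in parallel (they differ only in the order of the $L^p$- and $l^q$-norms, and the case $q=\infty$ must be included), and, crucially, making sure that no intermediate constant secretly depends on the number of sets. Verifying that $\sup_i g_{i,k}$ is a genuine fractional $s$-gradient of $\sup_i u_i$ is routine once the $\mu$-null exceptional sets are unioned, but it should be recorded explicitly.
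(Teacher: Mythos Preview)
Your argument is correct and is the standard one; the paper itself does not prove this lemma but simply cites \cite[Lemma~6.4]{HeKoTu}, and your approach (take $u=\sup_i u_i$ with fractional gradient $\sup_i g_{i,k}$, then split into the regimes $q\ge p$ and $q<p$ using Minkowski's inequality or the elementary subadditivity inequality as appropriate) is precisely what one finds in that reference.

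One small slip: at the outset you write ``I may assume $\sum_i C_\cF(E_i)<\infty$,'' but the hypothesis you actually need (and use at the end) is $\sum_i C_\cF(E_i)^r<\infty$; when $r<1$ these are not equivalent. This does not affect the argument, since the near-optimal choice $\|u_i\|_\cF^p\le C_\cF(E_i)+\eps 2^{-i}$ is made termwise and then raised to the power $r$, exactly as you do in the final paragraph.
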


The following theorem is the main result of this section.
Its proof is a modification of the proof of \cite[Theorem 2.11]{HaKi}.

\begin{theorem}\label{limsup thm}
Let $\cB$ be a differentiation basis on a doubling metric measure space $X$. Let
$\cF\in\{N^s_{p,q}(X),M^s_{p,q}(X)\}$,
where $0<s<1$, $0<p,q<\infty$, or 
 $\cF=M^s_{p,\infty}(X)=M^{s,p}(X)$, where $0<s\le 1$ and $0<p<\infty$.
Then the following claims are equivalent. 
\begin{itemize}
\item[(1)] For every quasicontinuous $u\in\cF$, there exists a set $E$ with $C_\cF(E)=0$ such that
\[
\lim_{\cB\,\ni B\to x}m^\gamma_u(B)=u(x)
\]
for every $0<\gamma<1$ and $x\in X\setminus E$.

\item[(2)] For every $0<\gamma<1$, there exists a constant $C$ such that 
\[
C_\cF(\{x\in X:\limsup_{\cB\,\ni B\to x}m^\gamma_{|u|}(B)>\lambda\})\le C\lambda^{-p}\|u\|_\cF^p
\]
for every $\lambda>0$ and $u\in\cF$.

\item[(3)] For each $0<\gamma<1$, $\lambda>0$ and for each sequence $(u_k)$ such that $\|u_k\|_\cF\to 0$ as $k\to\infty$, we have 
\[
C_\cF(\{x\in X:\limsup_{\cB\,\ni B\to x} m^\gamma_{|u_k|}(B)>\lambda\})\to 0
\]
as $k\to\infty$.
\end{itemize}
\end{theorem}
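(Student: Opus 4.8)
\textbf{Proof plan for Theorem \ref{limsup thm}.}

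The plan is to prove the cyclic chain of implications $(1)\Rightarrow(2)\Rightarrow(3)\Rightarrow(1)$, following the template of \cite[Theorem 2.11]{HaKi} but working with $\gamma$-medians throughout and invoking the capacity machinery (Lemmas \ref{cap lemma} and \ref{r-subadd}) in place of the Lebesgue measure.

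For $(1)\Rightarrow(2)$, I would argue by contradiction in the usual ``gliding hump'' fashion. If $(2)$ fails for some fixed $\gamma$, then for each $k$ there is a function $v_k\in\cF$ with, say, $\|v_k\|_\cF\le 2^{-k}$, but $C_\cF(\{\limsup_{\cB\ni B\to x}m^\gamma_{|v_k|}(B)>1\})\ge c_0>0$. Using the $r$-quasi-subadditivity of the capacity (Lemma \ref{r-subadd}) one sums these to produce a single $u=\sum_k v_k\in\cF$ (the series converges in $\cF$ by completeness and the quasi-triangle inequality for the $\cF$-norm). Passing to the quasicontinuous representative $u^*$ and comparing with the partial sums quasieverywhere, one uses Lemma \ref{median lemma}(7) (subadditivity of medians) to conclude that $\limsup_{\cB\ni B\to x}m^\gamma_{|u^*|}(B)$ cannot equal $u^*(x)$ on a set of positive capacity, contradicting $(1)$. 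A clean way to phrase this: set $E=\{x: \limsup_{\cB\ni B\to x}m^{\gamma}_{|u^*-u^*(x)|}(B)>0\}$, show $C_\cF(E)=0$ from $(1)$ via Lemma \ref{median lemma}(6), and derive the quantitative failure on $X\setminus E$.

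The implication $(2)\Rightarrow(3)$ is immediate: if $\|u_k\|_\cF\to0$ then the bound in $(2)$ forces $C_\cF(\{\limsup_{\cB\ni B\to x}m^\gamma_{|u_k|}(B)>\lambda\})\le C\lambda^{-p}\|u_k\|_\cF^p\to0$ for every $\lambda>0$.

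For $(3)\Rightarrow(1)$, fix a quasicontinuous $u\in\cF$ and approximate it by a sequence of functions that are ``nice'' at the level of the basis, e.g. Lipschitz or continuous functions $w_j$ with $\|u-w_j\|_\cF\to0$ (such approximations are available in $\cF$ by density; alternatively one can use the discrete convolution construction, though only density is needed here). For continuous $w_j$, Lemma \ref{median lemma}(9) gives $\lim_{\cB\ni B\to x}m^\gamma_{w_j}(B)=w_j(x)$ for every $x$. Writing $u=w_j+(u-w_j)$ and applying Lemma \ref{median lemma}(6),(7), one bounds
\[
\limsup_{\cB\,\ni B\to x}\bigl|m^\gamma_u(B)-u(x)\bigr|\le |w_j(x)-u(x)|+\limsup_{\cB\,\ni B\to x}m^{\min\{\gamma/2,1-\gamma/2\}}_{|u-w_j|}(B).
\]
The first term tends to $0$ quasieverywhere along a subsequence (since $w_j\to u$ in $\cF$ implies $w_j\to u^*$ quasieverywhere along a subsequence, by the capacitary Borel--Cantelli argument using Lemma \ref{r-subadd}), and for the second term assumption $(3)$ applied to $u_k=u-w_{j_k}$ with a diagonalisation/Borel--Cantelli argument over a sequence $\lambda=2^{-m}$ shows that the exceptional set where the $\limsup$ stays above any positive threshold has capacity zero. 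Intersecting countably many null-capacity sets (again Lemma \ref{r-subadd}) gives the single exceptional set $E$ with $C_\cF(E)=0$ on whose complement the median limit equals $u(x)$ for all rational $\gamma$, hence for all $\gamma$ by the monotonicity Lemma \ref{median lemma}(1).

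The main obstacle I anticipate is the bookkeeping in $(3)\Rightarrow(1)$: one must simultaneously handle the capacitary exceptional sets coming from (a) the almost-everywhere-to-quasieverywhere upgrade for $w_j\to u^*$, (b) the countably many thresholds $\lambda$, and (c) the countably many values of $\gamma$, all while the capacity is only $r$-quasi-subadditive rather than subadditive; the correct order of quantifiers and a careful choice of a rapidly converging subsequence $(w_{j_k})$ (so that $\sum_k C_\cF(\cdot)^r<\infty$) is what makes the argument go through. The other delicate point is ensuring that the density of Lipschitz (or continuous) functions in $\cF$ holds in the full generality stated — for $M^{s,p}$ with $0<s\le1$ and the fractional scales $0<s<1$ this is known, and one should cite the appropriate reference; if density is unavailable one falls back on the discrete median convolution $u_r^\gamma$, for which an analogue of Lemma \ref{median lemma}(9) must be established.
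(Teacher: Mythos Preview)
Your plan for $(2)\Rightarrow(3)$ and $(3)\Rightarrow(1)$ is essentially the paper's argument: the paper packages the latter as a separate lemma (Lemma~\ref{limsup lemma}), uses density of continuous functions in $\cF$ (citing \cite[Theorem 1.1]{HeKoTu} and \cite[Proposition 4.5]{SYY}), splits via Lemma~\ref{median lemma}(7), and then runs the capacitary Borel--Cantelli over $\gamma=1/k$ exactly as you describe. So that half is fine.

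The gap is in $(1)\Rightarrow(2)$. Your gliding-hump setup does not follow from the negation of $(2)$: what the negation gives is a sequence $w_k\in\cF$ (after rescaling the threshold to $1$) with
\[
C_\cF\bigl(\{x:\limsup_{\cB\ni B\to x}m^\gamma_{|w_k|}(B)>1\}\bigr)>4^k\|w_k\|_\cF^p,
\]
i.e.\ the \emph{ratio} blows up. It does \emph{not} give you simultaneously $\|v_k\|_\cF\le 2^{-k}$ and a uniform lower bound $C_\cF(\cdot)\ge c_0>0$; one side could decay while the other stays bounded, or both could go to $0$ at different rates. Even granting your normalization, summing $u=\sum v_k$ does not produce a contradiction with $(1)$: there is no inequality $m^\gamma_{|u|}(B)\ge m^\gamma_{|v_k|}(B)$ in general, so the large-limsup sets for the $v_k$ need not persist for $u$.

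The paper's route is direct and avoids all of this. Given $u\in\cF$, take the quasicontinuous representative $|u|^*$ of $|u|$ and apply $(1)$ to it: outside a set of zero capacity, $\limsup_{\cB\ni B\to x}m^\gamma_{|u|}(B)=|u|^*(x)$. Hence
\[
\{x:\limsup_{\cB\ni B\to x}m^\gamma_{|u|}(B)>\lambda\}\subset\{x:|u|^*(x)>\lambda\}\cup E,\qquad C_\cF(E)=0,
\]
and since $|u|^*/\lambda$ is quasicontinuous and $\ge 1$ on $\{|u|^*>\lambda\}$, Lemma~\ref{cap lemma} gives
\[
C_\cF(\{|u|^*>\lambda\})\le C\,\widetilde C_\cF(\{|u|^*>\lambda\})\le C\lambda^{-p}\||u|^*\|_\cF^p\le C\lambda^{-p}\|u\|_\cF^p.
\]
Your parenthetical ``clean way to phrase this'' is in fact this direct argument; it is self-contained and the gliding hump should be discarded entirely.
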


\begin{remark} 
If $X$ and $\cF$ are as above and $\cB=\{B(x,r):x\in X,\ r>0\}$, then by \cite[Theorem 7.7]{HeKoTu},
for every $0<\gamma<1$, there exists a constant $C$ such that 
\[
C_\cF(\{x\in X:\M^\gamma_\cB u(x)>\lambda\})\le C\lambda^{-p}\|u\|_\cF^p
\]
for every $\lambda>0$ and $u\in\cF$. It would be interesting to find out 
under what assumptions on $X$, $\cB$ and $\cF$ this type of estimate
is equivalent to conditions (1)--(3) of Theorem \ref{limsup thm}. 
\end{remark}

For the proof of Theorem \ref{limsup thm}, we need the following lemma.

\begin{lemma}\label{limsup lemma}
Let $\cB$ be a differentiation basis on a doubling metric measure space $X$. Let
$\cF\in\{N^s_{p,q}(X),M^s_{p,q}(X)\}$,
where $0<s<1$, $0<p,q<\infty$, or 
$\cF=M^s_{p,\infty}(X)=M^{s,p}(X)$, where $0<s\le 1$ and $0<p<\infty$.
Let $0<\eta<\gamma<1$. Suppose that, for every $\lambda>0$, 
\[
C_\cF(\{x\in X:\limsup_{\cB\,\ni B\to x}m^\eta_{|u_k|}(B)>\lambda\})\to 0
\]
as $\|u_k\|_\cF\to 0$. Then,
for every quasicontinuous $u\in\cF$, there exists a set $E$ with $C_\cF(E)=0$ such that
\[
\lim_{\cB\,\ni B\to x}m^\gamma_{|u-u(x)|}(B)=0
\]
for every $x\in X\setminus E$.

\end{lemma}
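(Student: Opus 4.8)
The plan is to run a standard capacity-theoretic ``good-$\lambda$'' / Borel--Cantelli argument, exploiting that the hypothesis is a quantitative convergence-to-zero statement for $\|u_k\|_\cF\to 0$. First I would recall that by Lemma~\ref{cap lemma} it is enough to work with quasicontinuous admissible functions, and that by density of, say, Lipschitz (or at least ``nice'') functions in $\cF$ together with Lemma~\ref{median lemma}(9) — which gives $\lim_{\cB\,\ni B\to x}m^\gamma_v(B)=v(x)$ at \emph{every} point for continuous $v$ — the medians behave correctly on a dense subclass. So fix a quasicontinuous $u\in\cF$ and choose $v_j$ in the dense subclass with $\|u-v_j\|_\cF\le 2^{-j}$ (passing to a subsequence so the norms decay geometrically). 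Write $w_j=u-v_j$, so $\|w_j\|_\cF\to 0$.

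The key estimate is the subadditivity-type splitting of the median: by Lemma~\ref{median lemma}(7), for $\gamma_1+\gamma_2=\gamma$,
\[
m^\gamma_{|u-u(x)|}(B)\le m^{\gamma_1}_{|v_j-v_j(x)|}(B)+m^{\gamma_2}_{|w_j-w_j(x)|}(B),
\]
and by Lemma~\ref{median lemma}(6),(7) the second term is controlled by $m^{\eta}_{|w_j|}(B)+|w_j(x)|$ provided $\gamma_2\le\eta$ (choose, e.g., $\gamma_1=\gamma-\eta$, $\gamma_2=\eta$, noting $0<\eta<\gamma$). Taking $\limsup_{\cB\,\ni B\to x}$ and using $v_j$ continuous so that the first term vanishes, I get
\[
\limsup_{\cB\,\ni B\to x}m^\gamma_{|u-u(x)|}(B)\le \limsup_{\cB\,\ni B\to x}m^{\eta}_{|w_j|}(B)+|w_j(x)|
\]
for every $j$ and every $x$ outside the null set where $v_j$ fails to be continuous or where $u\ne$ its representative. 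Now apply the hypothesis: for each $j$ the set $G_j=\{x:\limsup_{\cB\,\ni B\to x}m^\eta_{|w_j|}(B)>2^{-j}\}$ has $C_\cF(G_j)\to 0$; passing to a further subsequence I may assume $\sum_j C_\cF(G_j)^r<\infty$ where $r$ is the exponent from Lemma~\ref{r-subadd}. Similarly $H_j=\{x:|w_j(x)|>2^{-j}\}$ has small capacity (since $|w_j|\le 1$ after truncation and $\|w_j\|_\cF^p\to 0$ bounds $C_\cF$ of superlevel sets up to a constant, using that $w_j/2^{-j}\in\cA_\cF(H_j)$ modulo the outer-capacity remark), so again I can arrange $\sum_j C_\cF(H_j)^r<\infty$.

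Finally set $E=\bigcap_{N}\bigcup_{j\ge N}(G_j\cup H_j)$ together with the countable union of the exceptional null sets from above. By the $r$-subadditivity of Lemma~\ref{r-subadd}, $C_\cF(\bigcup_{j\ge N}(G_j\cup H_j))^r\le C\sum_{j\ge N}(C_\cF(G_j)^r+C_\cF(H_j)^r)\to 0$ as $N\to\infty$, hence $C_\cF(E)=0$; and for $x\notin E$ there is $N$ with $x\notin G_j\cup H_j$ for all $j\ge N$, whence $\limsup_{\cB\,\ni B\to x}m^\gamma_{|u-u(x)|}(B)\le 2^{-j+1}$ for all $j\ge N$, i.e.\ the $\limsup$ is $0$, which is the claim. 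The main obstacle I anticipate is the bookkeeping around the median splitting — making sure the choice $\gamma_1=\gamma-\eta$, $\gamma_2=\eta$ really lets Lemma~\ref{median lemma}(6)–(7) absorb the ``error'' term into $m^\eta_{|w_j|}(B)+|w_j(x)|$ without losing the strict inequality $\eta<\gamma$ — and the verification that superlevel sets of $\cF$-functions have capacity controlled by the norm, i.e.\ the elementary estimate $C_\cF(\{|w|>t\})\le C t^{-p}\|w\|_\cF^p$ for quasicontinuous $w$, which uses Lemma~\ref{cap lemma} and the outer-capacity Remark~\ref{cap remark 2}.
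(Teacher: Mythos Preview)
Your proof is correct and follows essentially the same strategy as the paper: approximate $u$ by continuous $v_j\in\cF$, split the median via Lemma~\ref{median lemma}(7) with parameters $\gamma-\eta$ and $\eta$ so that the $v_j$-term vanishes by continuity, and control the remaining two terms $\limsup m^\eta_{|w_j|}(B)$ and $|w_j(x)|$ using respectively the hypothesis and the estimate $C_\cF(\{|w|>t\})\le Ct^{-p}\|w\|_\cF^p$ for quasicontinuous $w$ via Lemma~\ref{cap lemma}. The only cosmetic difference is in the endgame: the paper fixes $\lambda>0$, lets $k\to\infty$ to conclude $C_\cF(\{x:\limsup m^\gamma_{|u-u(x)|}(B)>\lambda\})=0$ directly, and then takes a countable union over $\lambda$, whereas you package the same information as a Borel--Cantelli argument with $\lambda=2^{-j}$ and a subsequence---both are standard and yield the same exceptional set.
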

\begin{proof} 
By \cite[Theorem 1.1]{HeKoTu}, if $0<q<\infty$, and by \cite[Proposition 4.5]{SYY}, if $q=\infty$, continuous
functions are dense in $\cF$. Let $u\in\cF$ be quasicontinuous and
let $v_k\in\cF$, $k=1,2,\dots,$ be continuous such that $\|u-v_k\|_\cF\to 0$ as $k\to\infty$. Denote $w_k=u-v_k$. 
By Lemma \ref{median lemma},
\[
\begin{split}
\limsup_{\cB\,\ni B\to x}m^{\gamma}_{|u-u(x)|}(B)&\le \limsup_{\cB\,\ni B\to x}m^{\gamma-\eta}_{|v_k-v_k(x)|}(B)+\limsup_{\cB\,\ni B\to x}m^{\eta}_{|w_k-w_k(x)|}(B)\\
&\le \limsup_{\cB\,\ni B\to x}m^{\eta}_{|w_k|}(B)+|w_k(x)|.
\end{split}
\]
Hence, by Lemma \ref{r-subadd}, 
\[
\begin{split}
&C_\cF(\{x\in X:\limsup_{\cB\,\ni B\to x}m^{\gamma}_{|u-u(x)|}(B)>\lambda\})^r
\\&\le C\big(C_\cF(\{x\in X:\limsup_{\cB\,\ni B\to x}m^\eta_{|w_k|}(B)>\lambda/2\})^r
+C_\cF(\{x\in X:|w_k(x)|>\lambda/2\})^r\big).
\end{split}
\]
By assumption, 
\[
\begin{split}
C_\cF(\{x\in X:\limsup_{\cB\,\ni B\to x}m^{\eta}_{|w_k|}(B)>\lambda/2\})\to 0
\end{split}
\]
as $k\to\infty$.
Since $|w_k|$ is quasicontinuous, Lemma \ref{cap lemma} gives
\[
\begin{split}
C_\cF(\{x\in X:|w_k(x)|>\lambda/2\}) &\le C\widetilde C_\cF(\{x\in X:|w_k(x)|>\lambda/2\})\\
&\le C2^p\lambda^{-p}\|w_k\|_\cF^p\to 0
\end{split}
\]
as $k\to\infty$.
It follows that
\[
C_\cF(\{x\in X:\limsup_{\cB\,\ni B\to x}m^{\gamma}_{|u-u(x)|}(B)>\lambda\})=0
\]
for every $\lambda>0$. Hence, by Lemma \ref{r-subadd},
\[
C_\cF(\{x\in X:\limsup_{\cB\,\ni B\to x}m^{\gamma}_{|u-u(x)|}(B)>0\})=0.
\]
\end{proof}

\emph{Proof of Theorem \ref{limsup thm}.} 
We show that $(1)$ implies $(2)$. Let $0<\gamma<1$ and $u\in\cF$. Then $|u|\in\cF$ and $\||u|\|_\cF\le\|u\|_\cF$. Let $|u|^*$ be a quasicontinuous representative of $|u|$ and let $\lambda>0$. By $(1)$ and Lemma \ref{cap lemma}, we have
\[
\begin{split}
C_\cF(\{x\in X:\limsup_{\cB\,\ni B\to x}m^\gamma_{|u|}(B)>\lambda\})
&=C_\cF(\{x\in X:\limsup_{\cB\,\ni B\to x}m^\gamma_{|u|^*}(B)>\lambda\})\\
&= C_\cF(\{x\in X: |u|^*(x)>\lambda\})\\
&\le C\widetilde C_\cF(\{x\in X: |u|^*(x)>\lambda\})\\
&\le  C\lambda^{-p}\||u|^*\|_\cF^p\\
&\le  C\lambda^{-p}\|u\|_\cF^p.
\end{split}
\]

It is clear that $(2)\implies (3)$. 

Then we show that $(3)$ implies $(1)$. Let $u\in\cF$ be quasicontinuous. By Lemma \ref{limsup lemma}, for every $k=2,3,\dots$, there exists $E_k$ such that $C_\cF(E_k)=0$ and
\[
\lim_{\cB\ni B\to x}m^{1/k}_{|u-u(x)|}(B)=0
\]
for every $x\in X\setminus E_k$. 
It follows that
\[
\lim_{\cB\ni B\to x}m^{\gamma}_{|u-u(x)|}(B)=0
\]
for every $0<\gamma<1$ and $x\in X\setminus E$, where $E=\cup_{k\ge 2}E_k$ is of $\cF$-capacity zero by Lemma \ref{r-subadd}.
By Lemma \ref{median lemma},
\[
| m^\gamma_{u}(B)-u(x)|
=| m^\gamma_{u-u(x)}(B)|
\le m^{\min\{\gamma,1-\gamma\}}_{|u-u(x)|}(B),
\]
and the claim follows. \hfill $\Box$
\bigskip

If we restrict the value of $p$ such that $\cF$-functions are
locally integrable, then  Theorem \ref{limsup thm}
has a counterpart formulated in terms of integral averages.
The proof of Theorem \ref{limsup thm for averages}, which is similar to the proof
of Theorem \ref{limsup thm}, will be omitted. 

\begin{theorem}\label{limsup thm for averages}
Let $\cB$ be a Busemann--Feller basis on a metric measure space $X$ satisfying \eqref{doubling dim}. Let
$\cF\in\{N^s_{p,q}(X),M^s_{p,q}(X)\}$,
where $0<s<1$, $Q/(Q+s)<p<\infty$, $0<q<\infty$, or 
 $\cF=M^s_{p,\infty}(X)=M^{s,p}(X)$, where $0<s\le 1$ and $Q/(Q+s)<p<\infty$.
Then the following claims are equivalent. 
\begin{itemize}
\item[(1)] For every quasicontinuous $u\in\cF$, there exists a set $E$ with $C_\cF(E)=0$ such that
\[
\lim_{\cB\,\ni B\to x} u_B=u(x)
\]
for every $x\in X\setminus E$. 

\item[(2)] There exists a constant $C$ such that 
\[
C_\cF(\{x\in X:\limsup_{\cB\,\ni B\to x}|u|_B>\lambda\})\le C\lambda^{-p}\|u\|_\cF^p
\]
for every $u\in\cF$.

\item[(3)] For each $\lambda>0$ and for each sequence $(u_k)$ such that $\|u_k\|_\cF\to 0$ as $k\to\infty$, we have 
\[
C_\cF(\{x\in X:\limsup_{\cB\,\ni B\to x} |u_k|_B>\lambda\})\to 0
\]
as $k\to\infty$.
\end{itemize}
\end{theorem}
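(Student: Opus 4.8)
The plan is to follow the proof of Theorem~\ref{limsup thm} line by line, replacing the $\gamma$-medians $m^\gamma_f(B)$ by the integral averages $f_B$ throughout, and to prove the cyclic chain $(1)\Rightarrow(2)\Rightarrow(3)\Rightarrow(1)$. The only genuinely new ingredient is that the restriction $Q/(Q+s)<p$ is used to guarantee, via the Sobolev embedding theorem for Haj\l asz--Besov and Haj\l asz--Triebel--Lizorkin spaces (see \cite{KYZ}), that $\cF\subset\lloc{X}$, so that the averages $u_B$ are well defined for $u\in\cF$. Two elementary facts about averages take the place of the relevant parts of Lemma~\ref{median lemma}: the average is linear, so $(u-u(x))_B=u_B-u(x)$, and $|u_B|\le|u|_B$; moreover $|u|\in\cF$ with $\||u|\|_\cF\le\|u\|_\cF$, since every fractional $s$-gradient of $u$ is also a fractional $s$-gradient of $|u|$.

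For $(1)\Rightarrow(2)$ I would fix $\lambda>0$ and $u\in\cF$, and let $|u|^*$ be a quasicontinuous representative of $|u|\in\cF$. Applying $(1)$ to $|u|^*$ gives a set $E$ with $C_\cF(E)=0$ such that $\lim_{\cB\ni B\to x}(|u|^*)_B=|u|^*(x)$ for $x\in X\setminus E$; since $(|u|^*)_B=|u|_B$, the symmetric difference of the sets $\{x:\limsup_{\cB\ni B\to x}|u|_B>\lambda\}$ and $\{x:|u|^*(x)>\lambda\}$ is contained in $E$. Hence, by Lemma~\ref{r-subadd}, by Lemma~\ref{cap lemma}, and by testing $\widetilde C_\cF$ with the quasicontinuous admissible function $|u|^*/\lambda$,
\[
C_\cF\big(\{x:\limsup_{\cB\ni B\to x}|u|_B>\lambda\}\big)\le C\,C_\cF\big(\{|u|^*>\lambda\}\big)\le C\,\widetilde C_\cF\big(\{|u|^*>\lambda\}\big)\le C\lambda^{-p}\||u|^*\|_\cF^p\le C\lambda^{-p}\|u\|_\cF^p.
\]
The implication $(2)\Rightarrow(3)$ is immediate.

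For $(3)\Rightarrow(1)$ I would first establish the averaged analogue of Lemma~\ref{limsup lemma}: under hypothesis $(3)$, for every quasicontinuous $u\in\cF$ there is a set $E$ with $C_\cF(E)=0$ such that $\lim_{\cB\ni B\to x}|u-u(x)|_B=0$ for $x\in X\setminus E$. Continuous functions are dense in $\cF$ (by \cite[Theorem~1.1]{HeKoTu} if $q<\infty$ and \cite[Proposition~4.5]{SYY} if $q=\infty$), so I choose continuous $v_k\in\cF$ with $\|u-v_k\|_\cF\to0$ and set $w_k=u-v_k$. Since $v_k$ is continuous, $\limsup_{\cB\ni B\to x}|v_k-v_k(x)|_B=0$, and therefore
\[
\limsup_{\cB\ni B\to x}|u-u(x)|_B\le\limsup_{\cB\ni B\to x}|w_k-w_k(x)|_B\le\limsup_{\cB\ni B\to x}|w_k|_B+|w_k(x)|.
\]
By Lemma~\ref{r-subadd},
\[
C_\cF\big(\{\limsup_{\cB\ni B\to x}|u-u(x)|_B>\lambda\}\big)^r\le C\Big(C_\cF\big(\{\limsup_{\cB\ni B\to x}|w_k|_B>\lambda/2\}\big)^r+C_\cF\big(\{|w_k|>\lambda/2\}\big)^r\Big);
\]
the first term on the right tends to $0$ as $k\to\infty$ by $(3)$, and since $|w_k|=|u-v_k|$ is quasicontinuous, the second tends to $0$ by Lemma~\ref{cap lemma} together with the bound $\widetilde C_\cF(\{|w_k|>\lambda/2\})\le(2/\lambda)^p\|w_k\|_\cF^p$. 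Thus $C_\cF(\{x:\limsup_{\cB\ni B\to x}|u-u(x)|_B>\lambda\})=0$ for every $\lambda>0$, and taking the union over $\lambda=1/j$, $j\in\n$, and applying Lemma~\ref{r-subadd} once more shows that $\limsup_{\cB\ni B\to x}|u-u(x)|_B=0$ for $x$ outside a set $E$ of $\cF$-capacity zero. For such $x$,
\[
|u_B-u(x)|=|(u-u(x))_B|\le|u-u(x)|_B\to0,
\]
which is $(1)$.

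The principal point requiring attention is the first one: one must verify that $\cF$ embeds into $\lloc{X}$ precisely in the range $p>Q/(Q+s)$, which is exactly why the median version, Theorem~\ref{limsup thm}, holds for all $p>0$ while the present averaged version needs this lower bound on $p$. Apart from that, the proof is a routine transcription and is in fact slightly shorter than that of Theorem~\ref{limsup thm}: the non-linearity of medians forced the introduction of the auxiliary exponent $0<\eta<\gamma$ in Lemma~\ref{limsup lemma}, whereas the linearity of integral averages makes such a device unnecessary here.
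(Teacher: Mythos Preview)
Your proposal is correct and follows precisely the approach the paper intends: the paper explicitly omits the proof, stating only that it ``is similar to the proof of Theorem~\ref{limsup thm}'', and your line-by-line transcription---with medians replaced by integral averages, and the observation that the auxiliary exponent $\eta$ of Lemma~\ref{limsup lemma} becomes unnecessary thanks to linearity---is exactly that similar proof. Your identification of the role of the constraint $p>Q/(Q+s)$ (to ensure $\cF\subset\lloc{X}$ so that averages are defined) is also the correct explanation for why the hypotheses differ from those of Theorem~\ref{limsup thm}.
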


For the proof of our next result, Theorem \ref{M thm}, 
we need the following lemma. 
The proof given below is a modification of the proof of \cite[Theorem 4.1]{KiMa2}.
We do not know whether the lemma holds true
when $p\le 1$ or $q\le 1$.

\begin{lemma}\label{cap lemma 2}
Let $0<s\le 1$, $1<p<\infty$, $1<q\le \infty$ and $\cF\in\{M^s_{p,q}(X),N^s_{p,q}(X)\}$. Then
\[ 
C_\cF(U)= \lim_{i\to\infty}C_\cF(U_i)
\]
whenever $U_1\subset U_2\subset\cdots$ are open subsets of $X$ and $U=\cup_{i=1}^\infty U_i$.
\end{lemma}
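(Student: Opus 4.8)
The plan is to prove the nontrivial inequality $C_\cF(U)\le\lim_{i\to\infty}C_\cF(U_i)$, the reverse being immediate from monotonicity of the capacity. Write $\alpha=\lim_i C_\cF(U_i)$ and assume $\alpha<\infty$. For each $i$, choose admissible $u_i\in\cA_\cF'(U_i)$, i.e.\ $0\le u_i\le 1$ and $u_i\ge 1$ on a neighbourhood of $U_i$, with $\|u_i\|_\cF^p\le C_\cF(U_i)+2^{-i}$, and pick fractional $s$-gradients $(g_{i,k})_{k\in\z}\in\D^s(u_i)$ nearly realizing the relevant norm. The guiding idea, as in \cite[Theorem 4.1]{KiMa2}, is that the sequences $(u_i)$ and $(g_{i,k})$ are bounded in reflexive spaces (this is exactly where $1<p,q$ is used — the spaces $L^p(X)$, $L^p(X,l^q)$, resp.\ $l^q(L^p(X))$, are reflexive), so by weak compactness and Mazur's lemma we can pass to convex combinations that converge strongly, hence pointwise a.e.\ along a subsequence, to a limit $u$ with $(g_k)\in\D^s(u)$ and $\|u\|_\cF^p\le\alpha$ (lower semicontinuity of the norm under weak convergence, together with the fact that a fractional $s$-gradient of $u$ is obtained as the a.e.\ limit of the corresponding convex combinations of the $g_{i,k}$, cf.\ the gradient-closure lemma for $\D^s$). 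One must take the convex combinations so that they respect the tail structure: a convex combination of $u_i,u_{i+1},\dots,u_N$ is still $\ge1$ on a neighbourhood of $U_i$, so the limit $u$ satisfies $u\ge1$ a.e.\ on $U=\bigcup_i U_i$.

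The remaining issue is that $u\ge1$ only \emph{almost everywhere} on $U$, whereas admissibility for $C_\cF(U)$ requires $u\ge1$ on an \emph{open neighbourhood} of $U$. Here I would invoke the quasicontinuity machinery already set up in the excerpt: by Lemma \ref{cap lemma}, $C_\cF(U)$ is comparable to $\widetilde C_\cF(U)=\inf\{\|v\|_\cF^p: v \text{ quasicontinuous},\ v\ge1\ \cF\text{-q.e.\ on }U\}$. Replacing $u$ by its quasicontinuous representative $u^*$, the a.e.\ inequality $u^*\ge1$ on $U$ upgrades to $u^*\ge1$ $\cF$-quasieverywhere on $U$ (a quasicontinuous function that is $\ge1$ a.e.\ on an open set is $\ge1$ q.e.\ there, exactly as in the proof of Lemma \ref{cap lemma}), so $u^*\in\mathcal{QA}_\cF(U)$ and $\widetilde C_\cF(U)\le\|u^*\|_\cF^p=\|u\|_\cF^p\le\alpha$. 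Combining with Lemma \ref{cap lemma} yields $C_\cF(U)\le C\,\widetilde C_\cF(U)\le C\alpha$.

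To get the sharp statement $C_\cF(U)=\alpha$ rather than merely $C_\cF(U)\le C\alpha$, I would instead apply the weak-compactness argument directly to a minimizing sequence of quasicontinuous admissible functions $v_i\in\mathcal{QA}_\cF(U_i)$ for $\widetilde C_\cF(U_i)$ (using that $\widetilde C_\cF(U_i)\le C_\cF(U_i)$ and the outer-capacity structure), produce a quasicontinuous limit $v\in\mathcal{QA}_\cF(U)$ with $\|v\|_\cF^p\le\lim_i\widetilde C_\cF(U_i)$, and then use Lemma \ref{cap lemma} on both ends: $C_\cF(U)\le C\widetilde C_\cF(U)\le C\lim_i\widetilde C_\cF(U_i)\le C\lim_i C_\cF(U_i)$. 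Actually obtaining equality requires a little more care — one should run the argument so that the $v_i$ are chosen with $\|v_i\|_\cF^p$ within $2^{-i}$ of $C_\cF(U_i)$ and track constants; the cleanest route is Mazur's lemma applied so that the convex combinations are themselves genuinely admissible (open neighbourhood condition inherited), giving directly $C_\cF(U)\le\alpha$. I expect \textbf{the main obstacle} to be precisely this passage from the a.e.\ (or convex-combination) limit to a legitimately admissible competitor while keeping the constant equal to $1$; the reflexivity/Mazur step and the closure of $\D^s$ under a.e.\ limits are routine, and the quasicontinuity bookkeeping is handled by the already-established Lemma \ref{cap lemma}.
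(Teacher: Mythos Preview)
Your overall strategy---reflexivity of $L^p(X)$ and $L^p(X,l^q)$ (resp.\ $l^q(L^p(X))$) for $1<p,q$, weak compactness, Mazur's lemma with convex combinations taken from index $j$ upward so that $v_j=1$ on $U_j$, and a.e.\ closure of $\D^s$---is exactly the paper's approach. The difference is only in the final step, where you introduce an unnecessary complication.

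You worry that the limit $u$ satisfies $u\ge 1$ only almost everywhere on $U$, and you route through quasicontinuity and Lemma \ref{cap lemma} to fix this, at the cost of a multiplicative constant. But the point is much simpler: $U=\bigcup_i U_i$ is \emph{open}, and the $\cF$-norm is insensitive to modification on null sets. Indeed, if you redefine $u$ on the null set $\{x\in U: u(x)<1\}$ to equal $1$ there, the $L^p$-norm is unchanged, and any $(g_k)\in\D^s(u)$ remains a fractional $s$-gradient of the modified function (just enlarge the exceptional set in \eqref{frac grad} by that null set). The modified $u$ now equals $1$ on the open set $U$, which is a neighbourhood of itself, so $u\in\cA_\cF'(U)$ with the same norm. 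Together with weak lower semicontinuity this gives directly
\[
C_\cF(U)\le\|u\|_\cF^p\le\liminf_{i\to\infty}\|u_i\|_\cF^p\le\lim_{i\to\infty}C_\cF(U_i)+\eps,
\]
and letting $\eps\to 0$ yields the exact equality. No quasicontinuous representatives, no Lemma \ref{cap lemma}, no constant $C$. Your identification of ``the main obstacle'' is thus a non-obstacle once you exploit the hypothesis that the $U_i$ (hence $U$) are open.
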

\begin{proof}
We prove the case $\cF=M^s_{p,q}(X)$. The proof of the other case is similar.
By monotonicity, 
\[
\lim_{i\to\infty} C_{M^s_{p,q}(X)}(U_i)\le C_{M^s_{p,q}(X)}(U).
\]
To prove the opposite inequality, we may assume that
$\lim_{i\to\infty} C_{M^s_{p,q}(X)}(U_i)<\infty.$ Let $\eps>0$ and let $u_i\in \cA_{M^s_{p,q}(X)}'(U_i)$ and $\overline g_i=(g^i_k)_{k\in\z}\in \D^s(u_i)$
be such that 
\[
(\|u_i\|_{L^p(X)}+\|\overline g_i\|_{L^p(X,l^q)})^p < C_{M^s_{p,q}(X)}(U_i)+\eps.
\]
Then $(u_i)$ is bounded in $L^p(X)$ and $(\overline g_i)$ is bounded in $L^p(X,l^q)$. 
Hence, by passing to a subsequence, we may assume that $u_i\to u$ weakly in $L^p(X)$ and $\overline g_i\to\overline g$ weakly in $L^p(X,l^q)$. Using Mazur's lemma, we obtain convex combinations $v_j=\sum_{i=j}^{n_j}\lambda_{j,i}u_i$ and $\overline h_j=\sum_{i=j}^{n_j}\lambda_{j,i}\overline g_i$ 
such that $v_j\to u$ in $L^p(X)$, $\overline h_j\to \overline g$ in $L^p(X,l^q)$
as $j\to\infty$ and $\overline h_j\in \D^s(v_j)$. Passing to a subsequence, we may also assume that $v_j\to u$ and $\overline h_j\to \overline g$ pointwise almost everywhere as $j\to\infty$.
This easily implies that $\overline g\in \D^s(u)$.
Since $u_i=1$, it follows that,
for every $x\in U$, $v_j(x)=1$ for $j$ large enough. Hence $u=1$ in $U$ and so $u\in \cA_{M^s_{p,q}(X)}'(U)$. By the weak lower semicontinuity
of norms,
\[
\begin{split}
C_{M^s_{p,q}(X)}(U)&\le (\|u\|_{L^p(X)}+\|\overline g\|_{L^p(X,l^q)})^p\\ 
&\le \liminf_{i\to\infty}\,(\|u_i\|_{L^p(X)}+\|\overline g_i\|_{L^p(X,l^q)})^p\\ &\le \lim_{i\to\infty}C_{M^s_{p,q}(X)}(U_i)+\eps
\end{split}
\]
and the claim follows by letting $\eps\to 0$.
\end{proof}

\begin{theorem}\label{M thm}
Let $\cB$ be a Busemann--Feller basis on a separable metric measure space $X$. Let
$\cF\in\{N^s_{p,q}(X),M^s_{p,q}(X)\}$,
where $0<s<1$, $1<p,q<\infty$, or 
 $\cF=M^s_{p,\infty}(X)=M^{s,p}(X)$, where $0<s\le 1$ and $1<p<\infty$. Consider the following claims.
\begin{itemize}
\item[(1)] For each $0<\gamma<1$ and $u\in\cF$, 
\[
C_\cF(\{x\in X:\M^\gamma_\cB u(x)>\lambda\})\to 0
\]
as $\lambda\to \infty$.

\item[(2)] For each $0<\gamma<1$, $\lambda>0$ and for each sequence $(u_k)$ such that $\|u_k\|_\cF\to 0$ as $k\to\infty$, we have 
\[
C_\cF(\{x\in X:\M^\gamma_\cB u_k(x)>\lambda\})\to 0
\]
as $k\to\infty$.

\item[(3)] For every quasicontinuous $u\in\cF$, there exists a set $E$ with $C_\cF(E)=0$ such that
\[
\lim_{\cB\,\ni B\to x}m^\gamma_u(B)=u(x)
\]
for every $0<\gamma<1$ and $x\in X\setminus E$. 
\end{itemize}
Then $ (1)\implies (2) \implies (3)$.
\end{theorem}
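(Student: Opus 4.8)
The plan is to prove the two implications separately: $(2)\implies(3)$ follows almost immediately from Theorem~\ref{limsup thm}, while the substance lies in $(1)\implies(2)$, which I would obtain by a Baire category (Nikodym type) argument patterned on the proof of the measure theoretic statement preceding Theorem~\ref{Guz1 thm 1.1}, with Lebesgue measure replaced throughout by the capacity $C_\cF$.

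For $(2)\implies(3)$: since $\cB$ is a differentiation basis and $\limsup_{\cB\,\ni B\to x}m^\gamma_{|u|}(B)\le\M^\gamma_\cB u(x)$ for every $x\in X$, monotonicity of $C_\cF$ shows that $(2)$ implies assertion $(3)$ of Theorem~\ref{limsup thm}; by the equivalences established there this yields assertion $(1)$ of Theorem~\ref{limsup thm}, which is precisely assertion $(3)$ of the present theorem. (Alternatively, one can feed $(2)$, applied with a parameter $\eta<\gamma$, into Lemma~\ref{limsup lemma}, run the resulting conclusion over $\gamma=1/k$, combine the exceptional sets using Lemma~\ref{r-subadd}, and pass from $m^\gamma_{|u-u(x)|}$ to $m^\gamma_u$ via Lemma~\ref{median lemma}(1),(4),(6).)

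For $(1)\implies(2)$, fix $0<\gamma<1$. First I would use Lemma~\ref{countable} to replace $\cB$ by a countable Busemann--Feller basis $\cB'=\{B_j\}_{j\in\n}$ with $\M^\gamma_\cB u\le\M^\gamma_{\cB'}u$ and $\M^{\gamma/2}_{\cB'}u\le\M^{\gamma/4}_\cB u$ for every $u\in L^0(X)$, and for $k\in\n$ set $\M^{\gamma/2}_k u=\max\{\,m^{\gamma/2}_{|u|}(B_j)\chi_{B_j}:1\le j\le k\,\}$, which is lower semicontinuous (a finite maximum of lower semicontinuous functions) and increases to $\M^{\gamma/2}_{\cB'}u$. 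Let $0<r\le1$ be the exponent of Lemma~\ref{r-subadd} and put $\rho(v)=\inf_{\lambda>0}\{\,\lambda+C_\cF(\{|v|>\lambda\})^r\,\}$; this functional satisfies $\rho(v)=0$ exactly when $v=0$ $\cF$-quasieverywhere and $|\rho(v)-\rho(w)|\le\|v-w\|_{L^\infty(X)}$. Now fix $\eps>0$ and, for $m\in\n$, let
\[
F_m=\{\,u\in\cF:\ \sup_k\rho(m^{-1}\M^{\gamma/2}_k u)\le\eps\,\}.
\]
Since $\cF$ is a Banach space (here $1<p,q$), it suffices to check that each $F_m$ is closed and that $\bigcup_{m} F_m=\cF$, and then apply the Baire category theorem. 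For closedness, if $u_i\to u$ in $\cF$ then $u_i\to u$ in $L^p(X)$, hence in $L^0(B_j)$ for each $j$, so by Lemma~\ref{med lemma 2} the numbers $m^{\gamma/2}_{|u_i|}(B_j)$ converge; for fixed $k$ this gives $\|\M^{\gamma/2}_k u_i-\M^{\gamma/2}_k u\|_{L^\infty(X)}\to0$, hence $\rho(m^{-1}\M^{\gamma/2}_k u)=\lim_i\rho(m^{-1}\M^{\gamma/2}_k u_i)\le\eps$, and taking the supremum over $k$ afterwards gives $u\in F_m$. For the covering property, given $u\in\cF$, the open sets $\{\M^{\gamma/2}_k u>\lambda\}$ increase to $\{\M^{\gamma/2}_{\cB'}u>\lambda\}$, so by Lemma~\ref{cap lemma 2}---the only point where $1<p,q$ is used---their capacities increase to $C_\cF(\{\M^{\gamma/2}_{\cB'}u>\lambda\})$, whence $\sup_k\rho(m^{-1}\M^{\gamma/2}_k u)\le\rho(m^{-1}\M^{\gamma/2}_{\cB'}u)$; since $\M^{\gamma/2}_{\cB'}u\le\M^{\gamma/4}_\cB u$ and assumption $(1)$ (with parameter $\gamma/4$) gives $C_\cF(\{\M^{\gamma/4}_\cB u>\lambda\})\to0$ as $\lambda\to\infty$, the right-hand side tends to $0$ as $m\to\infty$, so $u\in F_m$ for some $m$.

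Baire then produces $m_0\in\n$, $u_0\in F_{m_0}$ and $\delta_0>0$ with $u_0+u\in F_{m_0}$ whenever $\|u\|_\cF<\delta_0$. From $|u|\le|u_0+u|+|u_0|$ and Lemma~\ref{median lemma}(2),(7) one gets $\M^\gamma_\cB u\le\M^\gamma_{\cB'}u\le\M^{\gamma/2}_{\cB'}(u_0+u)+\M^{\gamma/2}_{\cB'}u_0$; feeding in the defining bounds of $F_{m_0}$ for $u_0+u$ and $u_0$, using Lemma~\ref{cap lemma 2} once more to pass from $\M^{\gamma/2}_k$ to $\M^{\gamma/2}_{\cB'}$, and splitting the superlevel set by Lemma~\ref{r-subadd}, I would arrive at
\[
C_\cF(\{x\in X:\M^\gamma_\cB u(x)>\lambda\})^r\le 2C\eps\qquad\text{whenever }\lambda>2m_0\eps\text{ and }\|u\|_\cF<\delta_0,
\]
with $C$ the constant of Lemma~\ref{r-subadd}. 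Finally, since $\M^\gamma_\cB(tu)=t\,\M^\gamma_\cB u$ and $\|tu\|_\cF=t\|u\|_\cF$ for $t>0$, the constraint $\lambda>2m_0\eps$ is removed by applying the last estimate to $u/t$ for a suitably small $t=t(\eps)>0$; for any fixed $\lambda>0$ this yields $C_\cF(\{x\in X:\M^\gamma_\cB u(x)>\lambda\})^r\le2C\eps$ as soon as $\|u\|_\cF<t(\eps)\delta_0$, and letting $\eps\to0$ gives $(2)$. The hard part is exactly this implication $(1)\implies(2)$: one has to run the Banach--Steinhaus/Nikodym scheme with the merely $r$-quasi-subadditive set function $C_\cF$, which is what forces the use of lower semicontinuous maximal functions (so that all sets in sight are open), of Lemma~\ref{r-subadd} in place of countable subadditivity, of Lemma~\ref{cap lemma 2} (hence the hypothesis $1<p,q$) for the continuity of $C_\cF$ from below along increasing open sets, and of the final rescaling to offset the non-homogeneity of $\rho$.
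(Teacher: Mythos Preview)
Your proposal is correct and follows essentially the same route as the paper: $(2)\Rightarrow(3)$ is reduced to Theorem~\ref{limsup thm}, and $(1)\Rightarrow(2)$ is obtained by a Baire category argument on $\cF$ using the countable basis $\cB'$ from Lemma~\ref{countable}, the truncated operators $\M^{\gamma/2}_k$, and the capacity functional $\rho=\|\cdot\|_Y$, with Lemma~\ref{cap lemma 2} supplying the passage from $\M^{\gamma/2}_k$ to $\M^{\gamma/2}_{\cB'}$ at the end. The only cosmetic differences are that the paper first upgrades the quasi-metric $\|\cdot\|_Y$ to a genuine metric $d_Y$ via the Aoki--Rolewicz construction before running Baire, whereas you work directly with $\rho$ (which is fine since your closedness argument uses $|\rho(v)-\rho(w)|\le\|v-w\|_{L^\infty}$); and you spell out the final homogeneity/rescaling step, which the paper compresses into a single sentence. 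One small remark: your invocation of Lemma~\ref{cap lemma 2} in the covering step $\sup_k\rho(m^{-1}\M^{\gamma/2}_k u)\le\rho(m^{-1}\M^{\gamma/2}_{\cB'}u)$ is unnecessary there---plain monotonicity of $C_\cF$ already gives that inequality---so the genuine use of Lemma~\ref{cap lemma 2} (and hence of $1<p,q$) occurs only at the final passage to the full supremum, exactly as in the paper.
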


Implication $(2)\implies (3)$ of Theorem \ref{M thm} follows from Theorem \ref{limsup thm}
(because condition (2) of Theorem \ref{M thm} trivially implies condition (3)
of Theorem \ref{limsup thm}).
Hence, it suffices to prove the following lemma.

\begin{lemma}\label{M lemma}
Suppose that the assumptions of Theorem \ref{M thm} are in force. Let $0<\gamma<1$ and suppose that, for every $u\in \cF$,  
\[
C_{\cF}(\{x\in X: \M^{\gamma/4}_\cB u(x)>\lambda\})\to 0
\] as  $\lambda\to \infty$.
Then, for every $\lambda>0$,
\[
C_{\cF}(\{x\in X: \M^{\gamma}_\cB u_i(x)>\lambda\})\to 0
\] as $\|u_i\|_{\cF}\to 0$.
\end{lemma}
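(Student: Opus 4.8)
The plan is to run a Baire category argument in the complete space $\cF$, mirroring the proof of the implication $(6)\Rightarrow(5)$ of Theorem \ref{thm1 in rn}, with the functional $\|\cdot\|_{L^0(X)}$ used there replaced by a capacitary substitute.

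First I would pass to a countable basis. By Lemma \ref{countable} there is a countable Busemann--Feller basis $\cB'=\{B_j\}_{j\in\n}$ with $\M^\gamma_\cB f\le\M^\gamma_{\cB'}f$ and $\M^{\gamma/2}_{\cB'}f\le\M^{\gamma/4}_\cB f$ for all $f\in L^0(X)$; in particular the hypothesis yields $C_\cF(\{\M^{\gamma/2}_{\cB'}u>\lambda\})\to0$ as $\lambda\to\infty$ for every $u\in\cF$. For $k\in\n$ and $0<\eta<1$ set $\M^\eta_k f(x)=\max\{m^\eta_{|f|}(B_j):1\le j\le k,\ x\in B_j\}$. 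These functions are lower semicontinuous (the index set $\{j\le k:x\in B_j\}$ can only shrink as $x$ leaves the open sets $B_j$), they increase pointwise to $\M^\eta_{\cB'}f$ as $k\to\infty$, and, since convergence in $\cF$ implies convergence in $L^p(X)$ and hence $m^\eta_{|u_l|}(B_j)\to m^\eta_{|u|}(B_j)$ for each $j$ by Lemma \ref{med lemma 2}, the maps $u\mapsto\M^\eta_k u$ are \emph{uniformly} continuous: $\sup_X|\M^\eta_k u_l-\M^\eta_k u|\le\max_{1\le j\le k}|m^\eta_{|u_l|}(B_j)-m^\eta_{|u|}(B_j)|\to0$.

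Next I would introduce the capacitary functional. Let $r\in(0,1]$ and $C\ge1$ be the constants of Lemma \ref{r-subadd} and, for measurable $h\ge0$, put $\|h\|_*=\inf_{\lambda>0}\{\lambda+C_\cF(\{h>\lambda\})^r\}$. It is monotone, quasi-subadditive with constant $C$ by the $r$-subadditivity of $C_\cF$, satisfies $\|ah\|_*=\inf_{\lambda>0}\{a\lambda+C_\cF(\{h>\lambda\})^r\}\to0$ as $a\to0^+$ whenever $C_\cF(\{h>\lambda\})\to0$ as $\lambda\to\infty$, and obeys the implication $\|h\|_*<\eps$, $\lambda\ge\eps$ $\Rightarrow$ $C_\cF(\{h>\lambda\})^r\le\eps$. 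Fix $\eps>0$ and let $F_m=\{u\in\cF:\sup_k\|m^{-1}\M^{\gamma/2}_k u\|_*\le\eps\}$. From the uniform convergence above and the elementary bound $\|m^{-1}\M^{\gamma/2}_k u\|_*\le m^{-1}\sup_X|\M^{\gamma/2}_k u_l-\M^{\gamma/2}_k u|+\|m^{-1}\M^{\gamma/2}_k u_l\|_*$ one sees that each $F_m$ is closed; and since $\M^{\gamma/2}_k u\le\M^{\gamma/2}_{\cB'}u$ and $\|m^{-1}\M^{\gamma/2}_{\cB'}u\|_*\to0$ as $m\to\infty$ (by the hypothesis and the scaling property of $\|\cdot\|_*$), we get $\cF=\bigcup_m F_m$. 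By Baire's theorem some $F_{m_0}$ has nonempty interior, say $u_0\in F_{m_0}$ and $\|u\|_\cF<\delta$ $\Rightarrow$ $u_0+u\in F_{m_0}$. For such $u$, the pointwise estimate $|u|\le|u_0+u|+|u_0|$ together with Lemma \ref{median lemma}(2),(7) gives $\M^\gamma_k u\le\M^{\gamma/2}_k(u_0+u)+\M^{\gamma/2}_k u_0$, whence $\sup_k\|m_0^{-1}\M^\gamma_k u\|_*\le2C\eps$; by positive homogeneity of $\M^\gamma_k$ and rescaling, $\|v\|_\cF<\delta/m_0$ $\Rightarrow$ $\sup_k\|\M^\gamma_k v\|_*\le2C\eps$. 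As $\eps>0$ was arbitrary, $\sup_k\|\M^\gamma_k v_i\|_*\to0$ whenever $\|v_i\|_\cF\to0$.

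Finally I would conclude via Lemma \ref{cap lemma 2}. Fix $\lambda>0$. Since $\M^\gamma_\cB v_i\le\M^\gamma_{\cB'}v_i$ and $\{\M^\gamma_{\cB'}v_i>\lambda\}=\bigcup_k\{\M^\gamma_k v_i>\lambda\}$ is an increasing union of open sets, Lemma \ref{cap lemma 2} (applicable since $p,q>1$) gives $C_\cF(\{\M^\gamma_\cB v_i>\lambda\})\le\sup_k C_\cF(\{\M^\gamma_k v_i>\lambda\})$; once $\|v_i\|_\cF$ is small enough that $\sup_k\|\M^\gamma_k v_i\|_*<\lambda$, the last property of $\|\cdot\|_*$ bounds the right-hand side by $(\sup_k\|\M^\gamma_k v_i\|_*)^{1/r}\to0$. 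The step I expect to be most delicate is the closedness of the sets $F_m$: in the measure-theoretic case this was immediate from $L^0$-continuity of the finite maximal operators, whereas here it hinges on the observation that the convergence $m^\eta_{|u_l|}(B_j)\to m^\eta_{|u|}(B_j)$ over the \emph{finite} family $B_1,\dots,B_k$ is automatically uniform, so that $\M^\eta_k u_l\to\M^\eta_k u$ uniformly on $X$; this is what lets us sidestep continuity-from-below of $C_\cF$ for arbitrary increasing families. The one genuinely new ingredient relative to the measure case is the use of Lemma \ref{cap lemma 2}, which is precisely why $p$ and $q$ are assumed $>1$.
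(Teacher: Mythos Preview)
Your proof is correct and follows essentially the same route as the paper's: pass to the countable basis $\cB'$ via Lemma~\ref{countable}, introduce the capacitary functional $\|h\|_*=\inf_{\lambda>0}\{\lambda+C_\cF(\{h>\lambda\})^r\}$, run a Baire category argument in $\cF$ on the sets $F_m$, split $\M^\gamma_k u\le \M^{\gamma/2}_k(u_0+u)+\M^{\gamma/2}_k u_0$, and finish with Lemma~\ref{cap lemma 2}. The one technical difference is that the paper converts the quasi-metric $\|\cdot\|_Y$ into an honest metric $d_Y$ via \cite[Proposition~14.5]{Hei} before defining $F_n$ and checking closedness, whereas you bypass this by observing that the convergence $\M^{\gamma/2}_k u_l\to\M^{\gamma/2}_k u$ is actually \emph{uniform} on $X$ (a max of finitely many scalar-valued medians), so that the shift bound $\|m^{-1}\M^{\gamma/2}_k u\|_*\le m^{-1}\sup_X|\M^{\gamma/2}_k u_l-\M^{\gamma/2}_k u|+\|m^{-1}\M^{\gamma/2}_k u_l\|_*$ gives closedness of $F_m$ directly; this is a small but pleasant simplification.
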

\begin{proof} 
Denote by $Y$ the set of measurable functions for which
\[
\lim_{\lambda\to\infty}C_{\cF}(\{x\in X:|u(x)|>\lambda\})=0.
\]
and equip $Y$ with a functional
\[
\|u\|_Y=\inf_{\lambda>0}\big\{\lambda+C_{\cF}(\{x\in X:|u(x)|>\lambda\})^r\big\},
\]
where $r$ is the exponent from \eqref{eq: r-subadd}.
Then $\|u\|_Y=0$ if and only if $u=0$ $\cF$-quasieverywhere.
Moreover, there exists a constant $C\ge 1$ such that
\begin{equation}
\|u+v\|_{Y}\le C(\|u\|_{Y}+\|v\|_{Y})
\end{equation}
for every $u,v\in Y$ and
\begin{equation}\label{Y prop}
\|a_n u\|_Y\to 0 \ \text{ whenever } \ u\in Y \text{ and } a_n\to 0.
\end{equation}
It is easy to see that \[\lim_{i\to\infty}C_{\cF}(\{x\in X:|u_i(x)-u(x)|>\lambda\})=0,\] for every $\lambda>0$, if and only if $\lim_{i\to \infty}\|u_i-u\|_{Y}=0$. 

Since $\widetilde d_Y(u,v)=\|u-v\|_Y$ is a quasi-metric on $Y$, there exists
a metric $d_Y$ on $Y$ and a constant $0<\alpha\le 1$ such that
\begin{equation}\label{dY}
d_Y(u,v)\le \|u-v\|_Y^\alpha\le 2d_Y(u,v)
\end{equation}
for all $u,v\in Y$, see e.g. \cite[Proposition 14.5]{Hei}.


Let $\cB'=\{B_j\}$ be the countable Busemann--Feller basis given by Lemma \ref{countable}.
Denote \[\M^{\gamma/2}_k u=\max_{1\le j\le k}m^{\gamma/2}_{|u|}(B_j)\chi_{B_j}.\] Then
operators $\M^{\gamma/2}_{k}$ are continuous from 
$L^0(X)$ to $Y$ (and so from $\cF$ to $Y$). Indeed, if $u_i\to u$ in
$L^0(X)$, then by Lemma \ref{med lemma 2}, for every $j$, $m^{\gamma/2}_{|u_i|}(B_j)\to m^{\gamma/2}_{|u|}(B_j)$ as $i\to\infty$. 
Since
\[
|\M^{\gamma/2}_k u_i-\M^{\gamma/2}_k u|\le\max_{1\le j\le k}|m^{\gamma/2}_{|u_i|}(B_j)-m^{\gamma/2}_{|u|}(B_j)|\chi_{B_j},
\]
it follows that, for every $\lambda>0$,
\[
C_\cF(\{x\in X: |\M^{\gamma/2}_k u_i(x)-\M^{\gamma/2}_k u(x)|>\lambda\})=0
\]
for large enough $i$, which in turn implies that
$
d_Y(\M^{\gamma/2}_k u_i,\M^{\gamma/2}_k u)\to 0
$
as $i\to\infty$.

Let $\eps>0$. For $n\in\n$, denote 
\[
F_n=\{u\in \cF: \sup_k d_Y(n^{-1}M^{\gamma/2}_k u,0)\le \eps\}.
\] 
By continuity, sets $F_n$ are closed. Let $u\in \cF$. Since, by assumption, $\M^{\gamma/4}_{\cB}u\in Y$, Lemma \ref{countable} implies that $\M^{\gamma/2}_{\cB'}u\in Y$. Hence, by \eqref{dY} and \eqref{Y prop},
\[
\sup_k d_Y(n^{-1}M^{\gamma/2}_ku,0)\le \sup_k\|n^{-1}M^{\gamma/2}_ku\|_{Y}^\alpha\le \|n^{-1}M^{\gamma/2}_{\cB'}u\|_{Y}^\alpha\to 0
\]
as $n\to \infty$. It follows that $\cF=\cup_{n=1}^\infty F_n$. By the Baire--Hausdorff theorem (\cite[p.\,11]{Y}), one of the sets $F_n$ must have non-empty interior. Thus, there is $n_0\in\n$, $u_0\in F_{n_0}$ and $\delta>0$ such that $B_\cF(u_0,\delta)\subset F_{n_0}$. Now, if $\|u\|_{\cF}<\delta$, 
then $u+u_0\in F_{n_0}$, and so
\[
\|\M^\gamma_k n_0^{-1}u\|_{Y}
 \le C\big(\|n_0^{-1}\M^{\gamma/2}_k(u_0+u)\|_{Y}+\|n_0^{-1}\M^{\gamma/2}_k u_0\|_{Y}\big)\le C\eps
\]
for every $k\in\n$. It follows that $\sup_k\|\M^\gamma_ku_i\|_{Y}\to 0$ as $\|u_i\|_{\cF}\to 0$. This, Lemma \ref{countable} and Lemma \ref{cap lemma 2} imply that, 
for every $\lambda>0$,
\[
\begin{split}
C_\cF(\{x\in X:\M^\gamma_\cB u_i(x)>\lambda\})&\le
C_\cF(\{x\in X:\M^\gamma_{\cB'} u_i(x)>\lambda\})\\
&=\sup_k C_\cF(\{x\in X:\M^\gamma_ku_i(x)>\lambda\})\to 0
\end{split}
\]
as $\|u_i\|_{\cF}\to 0$.
\end{proof}


A similar reasoning as above gives the following
result for the usual maximal function. The proof of the theorem will be omitted.

\begin{theorem}\label{M thm 2}
Let $\cB$ be a Busemann--Feller basis on a separable metric measure space $X$. Let
$\cF\in\{N^s_{p,q}(X),M^s_{p,q}(X)\}$,
where $0<s<1$, $1<p,q<\infty$, or 
 $\cF=M^s_{p,\infty}(X)=M^{s,p}(X)$, where $0<s\le 1$ and $1<p<\infty$. Consider the following claims.
\begin{itemize}
\item[(1)] For each $u\in\cF$, 
\[
C_\cF(\{x\in X:\M_\cB u(x)>\lambda\})\to 0
\]
as $\lambda\to \infty$.

\item[(2)] For each $\lambda>0$ and for each sequence $(u_k)$ such that $\|u_k\|_\cF\to 0$ as $k\to\infty$, we have 
\[
C_\cF(\{x\in X:\M_\cB u_k(x)>\lambda\})\to 0
\]
as $k\to\infty$.

\item[(3)] For every quasicontinuous $u\in\cF$, there exists a set $E$ with $C_\cF(E)=0$ such that
\[
\lim_{\cB\,\ni B\to x}u_{B}=u(x)
\]
for every $x\in X\setminus E$. 
\end{itemize}
Then $ (1)\implies (2) \implies (3)$.
\end{theorem}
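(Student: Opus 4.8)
The plan is to mimic the proof of Theorem \ref{M thm}, replacing the median maximal operator $\M^\gamma_\cB$ by the ordinary maximal operator $\M_\cB$ everywhere, Theorem \ref{limsup thm} by Theorem \ref{limsup thm for averages}, and Lemma \ref{M lemma} by its integral-average analogue. The implication $(2)\implies(3)$ is immediate: for every $x\in X$ and every $k$ one has $\limsup_{\cB\,\ni B\to x}|u_k|_B\le\M_\cB u_k(x)$, hence by monotonicity of the capacity
\[
C_\cF\big(\{x\in X:\limsup_{\cB\,\ni B\to x}|u_k|_B>\lambda\}\big)\le C_\cF\big(\{x\in X:\M_\cB u_k(x)>\lambda\}\big).
\]
Thus $(2)$ of Theorem \ref{M thm 2} implies $(3)$ of Theorem \ref{limsup thm for averages} (the hypotheses on $p$ and $q$ imposed here are more restrictive than those required there), so Theorem \ref{limsup thm for averages} applies and yields its assertion $(1)$, which is exactly $(3)$ of Theorem \ref{M thm 2}.

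For $(1)\implies(2)$ I would first record the integral-average version of Lemma \ref{countable}: since $X$ is separable there is a countable Busemann--Feller basis $\cB'=\{B_j\}_{j\in\n}$ with $\M_\cB u(x)\le\M_{\cB'}u(x)\le 2\,\M_\cB u(x)$ for all $u\in\lloc{X}$ and $x\in X$. Its proof is that of Lemma \ref{countable}, with Lemma \ref{med lemma 3} replaced by the monotone convergence $|u|_{C_i}\to|u|_B$ for $C_i\uparrow B$ (legitimate because $\cF$-functions are locally integrable, as $p>1$), and Lemma \ref{median lemma}(3) replaced by the elementary bound $|u|_C\le 2|u|_B$ when $C\subset B$ and $\mu(B)\le 2\mu(C)$. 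Next, exactly as in the proof of Lemma \ref{M lemma}, I introduce the quasi-normed space $Y$ of measurable functions $u$ with $\lim_{\lambda\to\infty}C_\cF(\{|u|>\lambda\})=0$, equipped with $\|u\|_Y=\inf_{\lambda>0}\{\lambda+C_\cF(\{|u|>\lambda\})^r\}$ ($r$ as in Lemma \ref{r-subadd}) and metrized by a metric $d_Y$ with $d_Y(u,v)\le\|u-v\|_Y^\alpha\le 2d_Y(u,v)$, together with the truncated operators $\M_k u=\max_{1\le j\le k}|u|_{B_j}\chi_{B_j}$. Since $\cF\hookrightarrow L^p(X)\hookrightarrow\lloc{X}$, convergence $u_i\to u$ in $\cF$ forces $|u_i|_{B_j}\to|u|_{B_j}$ for each $j$, so every $\M_k$ is continuous from $\cF$ into $(Y,d_Y)$; hence, for each $\eps>0$, the sets $F_n=\{u\in\cF:\sup_k d_Y(n^{-1}\M_k u,0)\le\eps\}$ are closed.

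The Baire--Hausdorff argument then runs verbatim. Condition $(1)$ is precisely the statement $\M_\cB u\in Y$ for every $u\in\cF$, and the comparison above gives $\M_{\cB'}u\in Y$; hence $\sup_k d_Y(n^{-1}\M_k u,0)\le\|n^{-1}\M_{\cB'}u\|_Y^\alpha\to 0$ as $n\to\infty$, so $\cF=\bigcup_{n\in\n}F_n$. As $\cF$ is complete, some $F_{n_0}$ has nonempty interior, $B_\cF(u_0,\delta)\subset F_{n_0}$, and for $\|u\|_\cF<\delta$ one has $u+u_0\in F_{n_0}$; the pointwise inequality $\M_k u\le\M_k(u+u_0)+\M_k u_0$ --- which for integral averages holds with no loss of parameter, the single point where the reasoning is even simpler than in the median case --- together with the quasi-triangle inequality for $\|\cdot\|_Y$ gives $\sup_k\|\M_k u_i\|_Y\to 0$ as $\|u_i\|_\cF\to 0$. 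Finally the sets $\{\M_k u_i>\lambda\}$ are open and increase to $\{\M_{\cB'}u_i>\lambda\}$, so Lemma \ref{cap lemma 2} yields
\[
C_\cF\big(\{\M_\cB u_i>\lambda\}\big)\le C_\cF\big(\{\M_{\cB'}u_i>\lambda\}\big)=\sup_k C_\cF\big(\{\M_k u_i>\lambda\}\big)\to 0,
\]
which is $(2)$. The main obstacle is the step just used: one must know that each $\M_k$ is continuous from $\cF$ into $Y$ and that $\{\M_k u>\lambda\}$ is open, so that Lemma \ref{cap lemma 2} can be applied to the increasing union --- and it is precisely Lemma \ref{cap lemma 2} that forces the restriction $1<p,q$ in the statement. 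The remaining manipulations of $\|\cdot\|_Y$ and $d_Y$ are routine and identical to those in the proof of Lemma \ref{M lemma}.
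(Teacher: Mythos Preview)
Your proposal is correct and follows precisely the approach the paper intends: the paper explicitly omits the proof of Theorem \ref{M thm 2}, stating only that ``a similar reasoning as above gives the following result for the usual maximal function,'' and you have carried out exactly that similar reasoning, correctly replacing Theorem \ref{limsup thm} by Theorem \ref{limsup thm for averages}, adapting Lemma \ref{countable} and Lemma \ref{M lemma} to integral averages, and noting the simplification that the subadditivity $\M_k u\le\M_k(u+u_0)+\M_k u_0$ holds without any loss of parameter.
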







\begin{thebibliography}{000}

\bibitem{Guz1}
M. de Guzm\'an: Differentiation of integrals in $\rn$, Springer--Verlag, LNM 481, Berlin--Heidelberg--New York, 1975.

\bibitem{Guz2}
M. de Guzm\'an: Real-variable methods in Fourier analysis, North-Holland Publishing co., Amsterdam, 1981.





\bibitem{FZ} 
H. Federer and W.P. Ziemer: 
The Lebesgue set of a function whose distribution derivatives are $p$-th power summable, 
Indiana Univ. Math. J. 22 (1972), 139--158.


\bibitem{Fu}
N. Fujii:
A condition for a two-weight norm inequality for singular integral operators,
Studia Math. 98 (1991), no. 3, 175--190.


\bibitem{GKZ} 
A. Gogatishvili, P. Koskela and Y. Zhou:
Characterizations of Besov and Triebel--Lizorkin Spaces on Metric Measure Spaces, 
Forum Math. 25 (2013), no. 4, 787--819.


\bibitem{H} 
P. Haj\l asz:
Sobolev spaces on an arbitrary metric space, 
Potential Anal. 5 (1996), 403--415.



\bibitem{HaKi}
P. Harjulehto and J. Kinnunen: Differentiation bases for Sobolev functions on metric spaces, Publ. Mat. 48 (2004), no. 2, 381--395.


\bibitem{HIT}
T. Heikkinen, L. Ihnatsyeva and H. Tuominen:
Measure density and extension of Besov and Triebel--Lizorkin functions,
J. Fourier Anal. Appl. 22 (2016), no. 2, 334--382.

\bibitem{HeKoTu}
T. Heikkinen, P. Koskela and H. Tuominen:  Approximation and quasicontinuity of Besov and Triebel--Lizorkin functions, Trans. Amer. Math. Soc. 369 (2017), 3547--3573. 

\bibitem{HeTu}
T. Heikkinen and H. Tuominen:
 Smoothing properties of the discrete fractional maximal operator on Besov and Triebel--Lizorkin spaces, 
 Publ. Mat. 58 (2014), no. 2. 379--399.
 
\bibitem{HeTu2}
T. Heikkinen and H. Tuominen:
Approximation by H\"older functions in Besov and Triebel--Lizorkin spaces,
 Constr. Approx. 44 (2016), no. 3, 455--482.
 

 
\bibitem{Hei}
J. Heinonen: Lectures on analysis on metric spaces, Universitext, Springer--Verlag, New
York, 2001. 
 
\bibitem{Hy}
T. Hyt\"onen: The A2 theorem: remarks and complements. In Harmonic analysis and partial differential equations, volume 612 of Contemp. Math., pages 91--106. Amer. Math. Soc., Providence, RI, 2014.
 
 
 \bibitem{Ha}
 T. H\"anninen:
 Remark on dyadic pointwise domination and median oscillation decomposition,
Houston J. Math. 43 (2017), no. 1, p. 183--197.

 
 
\bibitem{JPW}
B. Jawerth, C. Perez, and G. Welland: 
The positive cone in Triebel--Lizorkin spaces and the relation among potential and maximal operators,
Harmonic analysis and partial differential equations (Boca Raton, FL, 1988), 71--91, 
Contemp. Math., 107, Amer. Math. Soc., Providence, RI, 1990. 

\bibitem{JT}
B. Jawerth and A. Torchinsky: 
Local sharp maximal functions,
J. Approx. Theory 43 (1985), no. 3, 231--270. 

\bibitem{J}
F. John: 
Quasi-isometric mappings, 
Seminari 1962/63 Anal. Alg. Geom. e Topol., vol. 2, Ist. Naz. Alta Mat., Ediz. Cremonese, Rome, 1965, pp. 462--473.

\bibitem{Kar}
N. Karak: 
Generalized Lebesgue points for Sobolev functions,
Czech Math J  67 (2017), 143--150. 
doi:10.21136/CMJ.2017.0405-15

\bibitem{K}
T. Kilpel\"ainen: 
On the uniqueness of quasicontinuous functions, 
Ann. Acad. Sci. Fenn. Math. 23 (1998), 261--262.

\bibitem{KKM}
T. Kilpel\"ainen, J. Kinnunen, O. Martio:
Sobolev Spaces with Zero Boundary Values on Metric Spaces,
Potential Anal. 12 (2000), 233--247.



\bibitem{KiMa2}
J. Kinnunen and O. Martio. Choquet property for the Sobolev capacity in metric spaces. In Proceedings of the conference on Analysis and Geometry held in Novosibirsk, 285--290, 2000.




\bibitem{KS}
P. Koskela and E. Saksman:
Pointwise characterizations of Hardy--Sobolev functions,
Math. Res. Lett. {\bf 15} (2008), no. 4, 727--744.

\bibitem{KYZ}
P. Koskela, D. Yang and Y. Zhou:
Pointwise Characterizations of Besov and Triebel--Lizorkin Spaces and Quasiconformal Mappings,
Adv. Math. 226 (2011), no. 4, 3579--3621.

\bibitem{L}
A. K. Lerner:
A pointwise estimate for the local sharp maximal function with applications to singular integrals, 
Bull. London Math. Soc. 42 (2010), 843--856.

\bibitem{L2}
A. K. Lerner: On an estimate of Calder\'on-Zygmund operators by dyadic positive operators. J. Anal. Math. 121, 141--161, 2013.

\bibitem{LP}
A. K. Lerner and C. P\'erez: 
Self-improving properties of generalized Poincar\'e type inequalities throught rearrangements, 
Math. Scand. 97 (2) (2005), 217--234.

\bibitem{LP2}
A. K. Lerner and C. P\'erez: 
A New Characterization of the Muckenhoupt $A_p$ Weights Through an Extension of the Lorentz--Shimogaki Theorem, Indiana math. J. 56, no 6 (2007), 2697--2722.

\bibitem{PT}
J. Poelhuis and A. Torchinsky:
Medians, continuity, and vanishing oscillation, 
Studia Math. 213 (2012), no. 3, 227--242.




\bibitem{SYY}
N. Shanmugalingam, D. Yang, and W. Yuan:
Newton--Besov Spaces and Newton--Triebel--Lizorkin Spaces on Metric Measure Spaces,
to appear in Positivity, 
http://dx.doi.org/10.1007/s11117-014-0291-7


\bibitem{St}
J--O. Str\"omberg:
Bounded mean oscillation with Orlicz norms and duality of Hardy spaces, 
Indiana Univ. Math. J. 28 (1979), no. 3, 511--544.

\bibitem{StTo}
J--O. Str\"omberg and A. Torchinsky: Weighted Hardy spaces. Lecture Notes in Mathematics, 1381. Springer-Verlag, Berlin, 1989.



\bibitem{Y}
D. Yang:
New characterizations of Haj\l asz--Sobolev spaces on metric spaces,
Sci. China Ser. A  46 (2003), 675--689.


\bibitem{Zh}
Y. Zhou: 
Fractional Sobolev extension and imbedding,
Trans. Amer. Math. Soc. 367 (2015), no. 2, 959--979.



\end{thebibliography}
\end{document}